\newtheorem{theorem}{Theorem}
\theoremstyle{plain}
\newtheorem{algorithm}{Algorithm}
\newtheorem{corollary}{Corollary}
\newtheorem{definition}{Definition}
\newtheorem{example}{Example}
\newtheorem{lemma}{Lemma}
\newtheorem{problem}{Problem}
\newtheorem{proposition}{Proposition}
\newtheorem{remark}{Remark}
\numberwithin{equation}{section}
\newcommand{\INIT}{\item[\algorithmicinit]}
\newcommand{\algorithmicinit}{\textbf{Initialization:}}
\newcommand{\lhom}[3]{\mathsf{hom}_{#1}(#2,#3)}
\newcommand{\Ima}{\mathrm{Im\;}}
\newcommand{\field}[1]{\mathsf{#1}}
\newcommand{\cuerpo}[1]{\mathsf{#1}}
\newcommand{\End}[2]{\mathrm{End}_{#1}(#2)}
\def\overset#1#2{
 \mathrel{\mathop{\kern 0pt#2}\limits^{#1}}}
\def\underset#1#2{
 \mathrel{\mathop{\kern 0pt#2}\limits_{#1}}}
\newcommand{\supp}{\operatorname{supp}}
\newcommand{\Newton}{\mathcal{N}}
\newlength{\resto}
\newcommand{\lres}[2]%
 {%
 \settoheight{\resto}{$#1$}%
 \addtolength{\resto}{-1pt}%
 \raisebox{\resto}{\scriptsize $#2$}\overline{#1}%
 }
\newcommand{\rres}[2]%
 {%
 \settoheight{\resto}{$#1$}%
 \addtolength{\resto}{-1pt}%
 \overline{#1}\raisebox{\resto}{\scriptsize $#2$}%
 }
\newcommand{\lm}{\operatorname{lm}}
\newcommand{\lc}[1]{\operatorname{lc}(#1)}
\newcommand{\Exp}{\operatorname{Exp}}
\newcommand{\SP}{\operatorname{SP}}
\newcommand{\level}{\operatorname{level}}
\newcommand{\Ext}{\operatorname{Ext}}
\newcommand{\abs}[2]{|#2|_{\gordo{#1}}}
\newcommand{\bilin}[2]{\langle #1, #2 \rangle}
\newcommand{\GKdim}{\operatorname{GKdim}}
\newcommand{\card}{\operatorname{card}}
\newcommand{\rgcd}{\operatorname{r-gcd}}
\newcommand{\llcm}{\operatorname{l-lcm}}
\newcommand{\lrem}{\operatorname{l-rem}}
\newcommand{\N}{\mathbb{N}}
\newcommand{\Nn}{\mathbb{N}^n}
\newcommand{\Nnm}{\mathbb{N}^{n, (m)}}
\newcommand{\BbbN}{{\mathbb{N}}}
\newcommand{\BbbC}{{\mathbb{C}}}
\newcommand{\sexp}[1]{\mathrm{sexp}(#1)}
\renewcommand{\exp}[1]{\mathrm{exp}(#1)}
\newcommand{\gordo}[1]{\boldsymbol{#1}}
\newcommand{\xx}{\gordo{x}}
\newcommand{\aalpha}{\gordo{\alpha}}
\newcommand{\ggamma}{\gordo{\gamma}}
\newcommand{\eepsilon}{\gordo{\epsilon}}
\newcommand{\bbeta}{\gordo{\beta}}
\newcommand{\lend}[2]{\mathrm{End}_{#1}(#2)}
\newcommand{\mono}[2]{\gordo{#1}^{\gordo{#2}}}
\DeclareMathOperator{\diag}{diag}
\DeclareMathOperator{\Ann}{Ann}
\DeclareMathOperator{\ann}{ann}
\DeclareMathOperator{\GL}{GL}
\DeclareMathOperator{\gr}{gr}
\DeclareMathOperator{\Syz}{Syz}
\begin{document}

\title[Module Theory]{Basic Module Theory over Non-Commutative Rings with Computational Aspects of Operator Algebras}


\author{Jos\'e G\'omez-Torrecillas}
\address{Departamento de \'Algebra,
Universidad de Granada,
E-18071 Granada, Spain}
\email{\tt gomezj@ugr.es}

\thanks{Partially supported by the Spanish Ministerio de 
 Ciencia en Innovaci\'on and the European Union --- grant MTM2010-20940-C02-01. The author wishes to thank Thomas Cluzeau, Viktor Levandovskyy, Georg Regensburger, and the anonymous referees for their comments that lead to improve this paper. }

%
%

\maketitle

\begin{abstract}
The present text surveys some relevant situations and results where basic Module Theory interacts with computational aspects of operator algebras. We tried to keep a balance between  constructive and algebraic aspects. 

\keywords{Non-commutative ring, Finitely presented module, Free resolution, Ore extension, Non-commutative factorization, Eigenring, Jacobson normal form, PBW ring, PBW algebra, Gr\"obner basis, Filtered ring, Gelfand-Kirillov dimension, grade number}
\end{abstract}

\tableofcontents

\section*{Introduction}
Let $R$ be any unital ring (in the sense, e.g.,  of \cite{Anderson/Fuller:1992}). A left $R$-module is just an additive group over which the elements of $R$ act as linear operators. Thus, results on left modules over general rings (like the existence of free resolutions, or Jordan-H\"older and Krull-Schmidt theorems for modules of finite length) are of interest for operator algebras. This kind of general results are part of Module Theory (see \cite{Anderson/Fuller:1992,Stenstrom:1975} for two expositions with different orientations).  On the other hand, the rings appearing as operator algebras (in their algebraic version) are rather concrete (rings of differential-difference operators being the prototype). When some basic results from Module Theory are interpreted for modules over some of these operator algebras, both theories benefit from the interaction: for instance, Jordan-H\"older theorem gives a unique factorization theorem for polynomials in a single Ore extension $\field{D}[ x ;\sigma,\delta]$ of a skew field $\field{D}$, or Krull-Schmidt theorem would be seen under the perspective of the searching of canonical forms for pseudo-linear operators over vector spaces. 

From an algorithmic or constructive point of view, many interesting operator algebras are polynomial algebras, well understood that the ``variables'', that should represent operators, will not commute among them or even with the coefficients,  which  sometimes represent ``functions''. For instance, left ideals and modules over iterated Ore extensions and other non-commutative polynomial rings play a role in the algebraic modeling of ``real'' situations, from cyclic convolutional codes \cite{Gluesing/Schmale:2004} to linear control systems \cite{Chyzak/Quadrat/Robertz:2005}, apart from the well-known use in Algebraic Analysis of the rings of differential operators (again, non-commutative polynomials), see \cite{Bjork:1979},  and \cite{Quadrat:2013} for a constructive approach. 

The present text surveys some relevant situations and results where basic Module Theory interacts with computational aspects of operator algebras. We tried to keep a balance between  constructive and algebraic aspects. 

Section \ref{sec:modules} fixes basic notations and notions, and explains how to represent finitely presented left modules, morphisms between them, and also their kernels and images, by means of matrices with entries in the base ring $R$. This is made over a general ring, and the included material is very elementary.  We hope, however, that our concise presentation would be useful when dealing with concrete situations. 

In Section \ref{sec:Ore} our ring will be an Ore extension $R = \field{D}[ x ;\sigma,\delta]$ of a skew field  $\field{D}$. We will discuss the structure of the finitely generated left modules over such an $R$. The included results  are inspired by the theory of non-commutative principal ideal domains developed in  the excellent monograph \cite{Jacobson:1943} by N. Jacobson. Our exposition highlights the constructive aspects of the theory, connecting the aforementioned general results for modules of finite length with some algorithms of factorization of non-commutative polynomials (see \cite{Singer:1996,Giesbrecht:1998,Gomez/alt:unp}) in certain special cases of Ore extensions. The existence of normal forms for matrices with coefficients in $R$ are also discussed in the light of the structure of left $R$-modules. 

Sections \ref{sec:LPBW} and \ref{sec:Buchberger} deal with a generalization of $R = \field{D}[ x ;\sigma,\delta]$ to several variables, namely, the left PBW rings from \cite{Bueso/Gomez/Lobillo:2001a}. As in the commutative case, a main tool for handling these rings from the effective point of view  are  Gr\"obner bases for submodules of free left modules. The definition of left PBW ring tries to capture the essential property that makes the multivariable division algorithm, and Buchberger's  algorithm  work: the exponent (i.e., the ``multi-degree'' of the leading term) of a product of polynomials is the sum of the exponents of the factors. The class of all left PBW rings contains some interesting subclasses,  like  the solvable polynomial algebras from \cite{Kandri-Rody/Weispfenning:1988}, solvable polynomial rings \cite{Kredel:1993}, or Ore algebras \cite{Chyzak/Salvy:1998}. Many algorithms working for some of these rings are implemented on \textsc{SINGULAR} (see \cite{Levandovskyy:2005}) or \textsc{Maple} (see \cite{Chyzak/Quadrat/Robertz:2007}). Any differential operator ring $\field{D}[x_1, \delta_1] \cdots [x_n, \delta_n]$ is a (left and right) PBW ring not covered,  in general,  by the aforementioned classes. 

An alternative approach to non-commutative Gr\"obner bases is to work on factors of finitely generated free algebras over commutative fields (see the survey \cite{Mora:1994}). In our opinion, interesting as it is, it does not fit to the examples of rings of operators as well as the left PBW do. One of the reasons is that the latter are  often non finitely generated algebras  over commutative fields (this is the case, for instance, of differential operator algebras over rational function fields), so they cannot be written as factor algebras of a finitely generated free algebra.  

Section \ref{sec:GKdimPBW} discusses an algorithm for the computation of the Gelfand-Kirillov dimension of a left $R$-module, where $R$ is a PBW algebra (i.e., a polynomial solvable algebra from \cite{Kandri-Rody/Weispfenning:1988} or a $G$-algebra from \cite{Levandovskyy:2005}).  This objective serves as an excuse to characterize PBW algebras within the class of  all  filtered algebras. Since the transfer of properties from the associated graded algebra to the filtered one is a very well developed theory (see, among others, \cite{Bjork:1979,Gomez/Lenagan:2000,Krause/Lenagan:2000,Lorenz:1988,McConnell/Robson:1988,McConnell/Stafford:1989}), this characterization allows to have many good properties for any PBW algebra for free \cite[Theorem 4.1]{Bueso/Gomez/Lobillo:2001b}.   In addition,  some other computational aspects of these algebras will be discussed.

The overview ends with an appendix on computer algebra systems by Viktor Levandovskyy. I am most grateful to him for writing it.

\section{Modules over a Non-Commutative Ring}\label{sec:modules}
First, we will fix some notations and recall some definitions on rings and their modules. For all undefined notions (as ring, (left) ideal, submodule, factor module, etc.) we refer the reader to \cite{Stenstrom:1975},  \cite{Jacobson:1980}, or \cite{Hazewinkel/alt:2004}. 

All rings will be assumed to have unit, that is, a neutral element, denoted by $1$, for the multiplication.  All homomorphisms of rings are assumed to be unital.  

The \emph{center} of a ring $R$ is the commutative subring defined by
$$C(R) = \{r \in R : rr' = r'r \; \forall \, r' \in R \}\,.$$
By a $\field{k}$-algebra, where $\field{k}$ is a field, we understand a ring $R$ such that $C(R)$ contains the field $\field{k}$. 

 A \emph{skew field} (or division ring) is a nontrivial  ring $\field{D}$ such that every nonzero element has a multiplicative inverse.

Given any ring $R$, by $R^{op}$ we denote its \emph{opposite ring}, that is, $R^{op}$ coincides with $R$ as an additive group, but it is endowed with the new product defined by $r \cdot s = sr$ for all $r, s \in R$.   

\subsection{On the Notion of Module, and other Basic Concepts} 
Basic examples of  non-commutative  rings appear as endomorphism rings of abelian groups.  Concretely, let  $M$ be an abelian group. We use additive notation $+$ for its group operation. Then the set $\mathrm{End}(M)$ of all  group endomorphisms of $M$   is an abelian group with the operation, denoted also by $+$, defined in the obvious way. In $\mathrm{End}(M)$ there exists a second operation: the composition of maps. With these two operations, $\mathrm{End}(M)$ becomes an associative ring with unit (the identity map, of course). 

\begin{definition}\label{modulo}
Let $M$ be any abelian group, and $R$ be a ring. We say that $M$ is a \emph{left $R$-module} if there exists a homomorphism of rings $\lambda : R \to \mathrm{End}(M)$. We say also that $M$ has the structure of a left $R$-module given by $\lambda$. Different homomorphisms of rings $R \to \mathrm{End}(M)$ lead to different left $R$-module structures on the same $M$.
\end{definition}

Given a left $R$-module $\lambda : R \to \mathrm{End}(M)$, we define a map $R \times M \to M$ sending a pair $(r, m) \in R \times M$ to the element 
$rm := \lambda(r)(m)$ of $M$. The following properties hold for every $m, m' \in M, r, r' \in R$.
\begin{enumerate}
\item $r(m + m') = rm + rm'$.
\item $(r + r') m = rm +  r' m$.
\item $r(r'm) = (rr')m$.
\item $1m = m$.
\end{enumerate}
Conversely, an abelian group $M$ with a map $R \times M \to M$ that sends each $(r,m) \in R  \times M$ to  some element of $M$, denoted by    $rm$, and satisfying the properties above, gives a homomorphism of rings $\lambda : R \to \mathrm{End}(M)$, by means of the rule $\lambda (r)(m) : = rm$, which makes $M$ a left $R$-module. Definition \ref{modulo}  stresses  the fact that elements of a ring $R$ are interpreted as linear operators on any left $R$-module. We sometimes write ${}_RM$ to emphasize the left action of $R$ on $M$.  A \emph{right} $R$-module is, by definition, a left module over $R^{op}$.

 Submodules of a left module ${}_RM$ are defined in the obvious way. The set of submodules of ${}_RM$ is a (modular and pseudo-complemented) lattice (see \cite[Chapter 3]{Stenstrom:1975}), being the lower bound given by the intersection and the upper bound by the sum of submodules. 

The \emph{annihilator of} ${}_RM$ is the two-sided ideal of $R$ defined as 
\[
\Ann_R(M) = \ker \lambda = \{ r \in R : rm = 0 \; \forall m \in M \},
\]
where the symbol $\ker$ means, as usual, the kernel of an additive map.  That $\Ann_R(M)$ is a two-sided ideal of $R$ is immediately deduced from the fact that $\lambda$ is a homomorphism of rings. 
Obviously, $M$ becomes a left $R/\Ann_R(M)$-module, and the lattice of left $R$-submodules of $M$ is the same than that of left $R/\Ann_R(M)$-submodules.

A \emph{homomorphism}  (or \emph{morphism}) of left $R$-modules is a map $h : M \to N$ such that $h(rm+r'm') = rh(m) + r'h(m')$ for all $r, r' \in R, m, m' \in M$. The homomorphism is said to be an \emph{isomorphism} if $h$ is bijective. If $h$ is an isomorphism, then its inverse map $h^{-1}$ is also an isomorphism of modules. If $M$ and $N$ are connected by an isomorphism, then we say that $M$ and $N$ are \emph{isomorphic}.   

Any homomorphism of left $R$-modules $h : M \to N$ encodes an isomorphism $\tilde{h} : M/\ker h \to \Ima h$, defined by $\tilde{h}(m + \ker h) = h(m)$. This is the first Noether's isomorphism Theorem.  In particular, if $m$ is any element of a left $R$-module $M$, then we have a homomorphism of left $R$-modules $\rho_m : R \to M$ defined by $\rho_m(r) = rm$ for all $r \in R$. The image of $\rho_m$ is $Rm$, the cyclic $R$-submodule of $M$ generated by $m$, while the kernel of $\rho_m$ is the \emph{annihilator} of $m$, namely, the left ideal of $R$ 
\[
\ann_R(m) = \{ r \in R : rm = 0\}\,.
\]
First Noether's isomorphism Theorem yields an isomorphism of left $R$-modules $Rm \cong R/\ann_R(m)$.

When $R$ is an algebra over a field $\field{k}$, then any left $R$-module $M$ becomes, by an obvious restriction of scalars, a vector space over $\field{k}$. A straightforward computation shows that the image of $\lambda :  R  \to \mathrm{End}(M)$ is included in the subring $\lend{\field{k}}{M}$ of all $\field{k}$-linear endomorphisms of $M$. That is, a left $R$-module is a $\field{k}$-vector space $M$ with a homomorphism of $\field{k}$-algebras $\lambda : R \to \lend{\field{k}}{M}$.

  If ${}_RM, {}_RN$ are modules, then $\lhom{R}{M}{N}$ denotes the set of all homomorphisms of left $R$-modules from $M$ to $N$. This set is an additive group with the obvious sum of homomorphisms, and it is a vector space over $\field{k}$ if $R$ is a $\field{k}$-algebra. In contrast with the commutative case, $\lhom{R}{M}{N}$ is not in general a (left or right) $R$-module.  When $M = N$, we use the notation $\lend{R}{M} = \lhom{R}{M}{M}$, and $\lend{R}{M}$ is considered as a ring with multiplication defined as the opposite of the composition of maps.

\subsubsection{Direct Sums.}
Given $R$-modules ${}_RM$ and ${}_RN$, we may endow the cartesian product $M \times N$ with the structure of a left $R$-module with the sum defined componentwise, and the left action of $R$ given by $r(m,n) = (rm,rn)$ for all $(m, n) \in M \times N, r \in R$. This new left $R$-module is called the external \emph{direct sum} of $M$ and $N$, and it is denoted by $M \oplus N$. This notation is also used for the decomposition of a module as \emph{internal} direct sum of two submodules. In this case, if we have a module ${}_RL$ and two submodules $M, N$ of $L$,  then  we say that $L$ is the internal direct sum of $M$ and $N$ if every $x \in L$ decomposes in a unique way as $x = m + n$ with $m \in M$ and $n \in N$. It happens that the map $x \mapsto (m,n)$ is an isomorphism of left $R$-modules from $L$ to the external direct sum $M \oplus N$. This identification, up to isomorphisms, of the internal and the external direct sum is often assumed. 

We may analogously form the direct sum of finitely many modules. In particular, given a positive integer $t$, we may consider the direct sum of $t$ copies of $R$, and thus the left $R$-module  $R^t = \{(r_1, \dots, r_t) : r_1, \dots, r_t \in R\}$ for each $t \geq 1$.  

One may conceive the direct sum of the modules of an infinite family $\{ M_i : i \in I \}$ of left $R$-modules. As a set, $\bigoplus_{i \in I} M_i$ is the subset of the cartesian product $\prod_{i \in I}M_i$ whose elements are the $I$-tuples with finitely many non zero components. Symbolically,
\[
\bigoplus_{i \in I} M_i = \{ (m_i)_{i \in I} : m_i \in M_i \text{ for all } i \in I, m_i \neq  0 \text{ for finitely many } i \in I \}\,.
\]
This set is a left $R$-module, called the \emph{direct sum of the family $\{ M_i : i \in I \}$} with the operations
\[
(m_i)_{i \in I} + (m'_i)_{i \in I} = (m_i+m_i')_{i \in I}, \qquad r(m_i)_{i \in I} = (rm_i)_{i \in I}\,.
\]

If, for any set $I$, we put $M_i = R$ for every $i \in I$, then we may form the direct sum $\bigoplus_{i \in I} M_i  $, which will be denoted by $R^{(I)}$. 

\subsubsection{Finitely Generated Modules, Free Modules and Bases.}
Recall that a subset $\{m_1, \dots, m_t \}$ of a left $R$-module $M$ is said to be a \emph{set of generators} of $M$ if for every $m \in  M$ there exist $r_1, \dots, r_t \in R$ such that $m = r_1 m_1 + \cdots + r_t m_t$. In such a case, $M$ is said to be \emph{finitely generated}. The left $R$-module $R^t$ is finitely generated for any $t \geq 1$, being a set of generators $\{\gordo{e}_1, \dots, \gordo{e}_t \}$, where $\gordo{e}_i$ denotes the $t$-tuple with a unique component equal to $1$ at the $ i $-th position, and the rest of entries equal to $0$. If $M$ is any other left $R$-module with a set of  generators $\{m_1, \dots, m_t \}$, then the map $\varphi : R^t \to M$ defined as $\varphi (\sum_ir_i\gordo{e}_i) = \sum_ir_im_i$ for all $\sum_ir_i\gordo{e}_i \in R^t$ is a surjective homomorphism of left $R$-modules. Thus, by Noether's first isomorphism theorem, $M$ is isomorphic to the quotient left $R$-module $R^t/\ker \varphi$. Observe that $\ker \varphi = \{ 0 \}$ if and only if the set $\{ m_1, \dots, m_t \}$ is $R$-linearly independent. In this case, $M$ is isomorphic to $R^t$ and $\{ m_1, \dots, m_t \}$ is a \emph{basis} of $M$. We say then  that $M$ is a free left $R$-module  on (or with)  the basis $\{m_1, \dots, m_t \}$.  We will often denote by $\mathbf{F}_t$ a free left $R$-module with a basis of cardinal $t$. 

We may consider free modules with non necessarily finite bases; they are needed for some constructions,  like Ore extensions.  Thus, given a left $R$-module $M$ and a set $\{ x_i : i \in I \}$ of elements of $M$ indexed by a set $I$ (finite or not), we say that $\{x_i : i \in I \}$ is a \emph{set of generators} of ${}_RM$ if for every $m \in M$, there exist finitely many $r_{i_1}, \dots, r_{i_s} \in R$,  such that $m = r_{i_1} x_{i_1} + \cdots + r_{i_s}x_{i_s}$, for $i_1, \dots, i_s \in I$. If the elements of the set of generators $\{ x_i : i \in I \}$ of $M$ are $R$-linearly independent, then we say  that ${}_RM$ is free  on  the basis $\{ x_i : i \in I \}$. In such a case, $M$ is isomorphic to $R^{(I)}$.

\subsection{A Motivating Example of Module}
We recall the basic example that connects linear differential operators and modules. 

\subsubsection{Differential Operators.}
Consider a homogeneous ordinary linear differential equation
\begin{equation*}
a_n(t)\frac{d^ny(t)}{dt^n} + \cdots + a_1(t)\frac{dy(t)}{dt}+
a_0(t)y(t) = 0,
\end{equation*}
where the $a_i(t)$'s are functions in some field (e.g. the field
$\mathbb{C}(t)$  of rational functions over the complex numbers.) Consider the linear differential operator
\begin{equation}\label{L}
L = a_n(t)\frac{d^n}{dt^n} + \cdots + a_1(t)\frac{d}{dt}+ a_0(t)
\end{equation}
acting on some (commutative)  $\mathbb{C}$-algebra $\mathcal{F}$ of functions containing  $\mathbb{C}(t)$  as a subalgebra (e.g. $\mathcal{F}$ could be  the algebra of all meromorphic functions). 
This allows
 $\mathbb{C}(t)$  act on $\mathcal{F}$ by multiplication. Thus,
 $\mathbb{C}(t) \cup \{ d/dt \} \subseteq
\End{\mathbb{C}}{\mathcal{F}}$  generates a
 $\mathbb{C}$-subalgebra, say $R$, of the (huge)  non-commutative  algebra 
$\End{\mathbb{C}}{\mathcal{F}}$ of all linear endomorphisms of $\mathcal{F}$. Obviously, $L \in R$ and the rule
\[
L \cdot y(t) = L(y(t))
\]
endows $\mathcal{F}$ with the structure of a left $R$-module, and
 equation \eqref{L} becomes $L \cdot y(t) = 0$.

Let $y(t) \in \mathcal{F}$ be a solution of $L \cdot y(t) = 0$.
Then the map
\[
R/RL \rightarrow \mathcal{F} \qquad (r + RL \mapsto r \cdot y(t))
\]
is a homomorphism of left $R$-modules. Here,
$RL = \{ rL ~|~ r \in R \}$ is the left ideal of $R$ generated by $L$.

Conversely, every homomorphism of left $R$-modules $\varphi :
R/RL \rightarrow \mathcal{F}$ provides a solution $y(t) :=
\varphi (1 + RL)$ of our differential equation. Therefore, the
generator $1 + RL$ of the left $R$-module $R/RL$ may be viewed as a
``generic solution''
of the differential equation $L(y(t)) = 0$.

The left $R$-module $R/RL$ contains relevant
information about the differential equation. For instance,  if two
differential  equations have
isomorphic associated left $R$-modules, then their sets of solutions are tightly related. More concretely, let $L_1, L_2 \in R$ be linear differential operators such that there exists an isomorphism of left $R$-modules $h : R/RL_1 \to R/RL_2$. 
Write $h (1 + RL_1) = F + RL_2$ for a suitable $F \in R$. Then $f \in \mathcal{F}$ is a solution of the diffential equation $L_2 \cdot  u = 0$ if and only if $F \cdot f$ is a solution of the differential equation $L_1 \cdot v =0$. In this way, isomorphic modules give equations whose sets of solutions are equivalent in a precise way.

More generally, a \emph{system} of  homogeneous  ordinary differential linear equations is identified with a finitely generated left
$R$-module, which is of the form $R^m/K$, where $K$ is a
submodule of  a finitely generated free left $R$-module $R^m$.

This gives, for instance, a safe
framework to declare when two systems are equivalent.

\subsubsection{Which Kind of Ring is our $R$?}

By Leibniz's rule, for every $a(t) \in  \mathbb{C}(t)$,  we get the  equality of operators in $\lend{\mathbb{C}}{\mathcal{F}}$ 
\begin{equation}\label{Leibniz}
d/dt \circ a(t)  = a(t) \circ d/dt  + \frac{d a(t)}{dt} ,
\end{equation}
whence $R$ is a non-commutative ring (commutative rings rarely
appear in nature). It follows from \eqref{Leibniz} that every differential operator $L
\in R$ may be written as in \eqref{L}.  On the other hand, the powers $(\frac{d}{dt})^n =
\frac{d^n}{dt^n}$ are clearly linearly independent over $ \mathbb{C}(t) $.
Therefore, the elements of $R$ are already polynomials in the ``variable'' $x:= \frac{d}{dt}$ with coefficients \emph{on the left} in the field $ \mathbb{C}(t)$. The non-commutative multiplication is completely determined by \eqref{Leibniz}, that is, $x a = a x + da/dt$ for all $a \in  \mathbb{C}(t)$.  This is a  basic  example of \emph{Ore extension} of a field \cite{Ore:1933}. 
The fundamental property of the ring $R$ (and of more general Ore extensions) is the existence of (left and right) Euclidean
division algorithms, which makes possible the computation of canonical forms of matrices with entries in $R$ and, as a consequence, of a structure theorem for finitely generated left $R$-modules (fully developed in \cite{Jacobson:1943}).

\subsubsection{The Abstract Setting: Systems of Linear Equations over a
Non-Com\-mu\-ta\-tive Ring.}

Let $\mathcal{F}$ be a vector space over a field $\field{k}$ and $R
\subseteq \End{\field{k}}{\mathcal{F}}$ a subalgebra (under composition)
of linear operators. Thus, $\mathcal{F}$ is a left $R$-module
whose elements play the r\^{o}le of ``functions'', on which the elements of $R$ operate. A system of linear equations over $R$ is
a left $R$-module $R^m/ K $, where $ K$ is a finitely generated
submodule of the free left $R$-module $ R^m$, that is, a finitely presented left $R$-module. 
The solutions of the system should be  then  found in  $\mathcal{F}^m$, since 
\[
\lhom{R}{R^m/K}{\mathcal{F}} \subseteq \lhom{R}{R^m}{\mathcal{F}} \cong \mathcal{F}^m,
\]
as vector spaces over $\field{k}$. 
Thus, a central problem is the computational treatment of
(finitely presented) modules over $R$.

Let us focus our attention in a very elementary problem: Given
$r_1, \dots, r_s \in R$ and $r \in R$, is $r$ linearly dependent
of $r_1, \dots, r_s$? In other words, we want to solve the
equation
\[
r = g_1r_1 + \cdots + g_sr_s \qquad (g_1, \dots, g_s \in R)\,.
\]
This is the ``membership problem'', i.e., is $r$ an element of the
left ideal $Rr_1 + \cdots + Rr_s$ generated by $r_1, \dots, r_s$?

 \subsubsection{A Canonical Example.}  Let $A = \mathbb{C}[t_1, \dots, t_n]$ be the ring of polynomials with complex coefficients
in the (commuting) variables $t_1, \dots, t_n$.  Let
$\mathcal{F}$ be an algebra of functions containing $A$ such that
the partial  derivatives  $\partial_i = \frac{\partial}{\partial t_i}$
``make sense'' on $\mathcal{F}$. Consider $R$ the subring of
$\End{\mathbb{C}}{\mathcal{F}}$ generated by $A$ and $\partial_1,
\dots, \partial_n$. So, every element of $R$ is a linear
differential operator with polynomial coefficients acting on
$\mathcal{F}$ (this is the $n$-th complex Weyl algebra).

The product in $R$ is built (from Leibniz's rule) on the
commutation relations $\partial_i t_i - t_i \partial_i = 1$
(think of $t_i$ as operators!) and any other pair among the
$t_i$'s and $\partial_j$'s commute.

An interpretation from the point of view of the theory of
differential equations of the membership problem here is to know
whether a given linear differential equation in a system can be dropped
because it is ``linearly dependent'' of the rest.

 We cannot expect an algorithm to solve the membership
problem for any ring $R$. However, some basic algorithms can be developed
for certain non-commutative polynomial rings (see  Section \ref{sec:LPBW})  that include differential operator rings and many difference operator rings. 

\subsection{Linear Algebra over a Non-Commutative Ring}\label{LA}
Linear algebra  rests  on the  assignment  of coordinates to each vector, once a basis is fixed in a vector space. In this way, linear transformations are represented by matrices, and their composition by the multiplication of matrices. It is possible to extend these ideas  to handle with homomorphisms of modules over a general ring $R$. 

\subsubsection{Morphisms between Free Modules and Matrices.}
Let $\mathbf{F}_s$ and $\mathbf{F}_t$ be finitely generated free left $R$-modules with bases $\{ \mathbf{u}_1, \dots, \mathbf{u}_s \}$ and $\{ \mathbf{e}_1, \dots, \mathbf{e}_t \}$, respectively. Let $R^{s \times t}$ denote the set of all matrices with $s$ rows and $t$ columns with  entries  in $R$. Any homomorphism of left $R$-modules $\psi: \mathbf{F}_s \to \mathbf{F}_t$ is represented by a matrix $A_{\psi} \in R^{s \times t}$ in the usual way: the matrix $A_{\psi} = (a_{ij})$ is defined by the conditions $\psi (\mathbf{u}_i) = \sum_j a_{ij}\mathbf{e}_j$ for $i = 1, \dots, s$.  The homomorphism $\psi$ is easily recovered from $A_{ \psi}$. Explicitly, if $\mathbf{u} =\sum_i x_i \mathbf{u}_i$ for  $\mathbf{x} = (x_1, \dots, x_s) \in R^s$, then $\psi (\mathbf{u}) = \sum_j y_j\mathbf{e}_j$, where $\mathbf{y} = (y_1,\dots,y_t) \in R^t$ is given by the matrix product 
\[
\mathbf{y} = \mathbf{x} A_{\psi}\,.
\] 
A straightforward computation shows that if $\xymatrix{\mathbf{F}_s \ar^-{\psi}[r] & \mathbf{F}_t \ar^-{\phi}[r] & \mathbf{F}_u}$ are homomorphism of free left $R$-modules, then, once fixed bases, $A_{\phi \psi} = A_{\psi} A_{\phi}$. This, in particular, shows that the endomorphism ring $\lend{R}{\mathbf{F}_s}$ of a free left $R$-module $\mathbf{F}_s$ is  isomorphic   to the matrix ring $R^{s \times s}$  (recall that the product of the ring $\lend{R}{\mathbf{F}_s}$ is the opposite of the composition of maps).

The group of units of $R^{s \times s}$, that is, of invertible matrices, is denoted by $\GL_s(R)$. 

\subsubsection{Presentations of Finitely Generated Modules.}
Given $M$, a left $R$-module generated by finitely many elements $m_1,\dots,m_t \in M$, and the surjective homomorphism of left $R$-modules $\varphi : \mathbf{F}_t \to M$ defined by $\varphi (\sum_i r_i \mathbf{e}_i) = \sum_i r_i m_i$, where $r_1, \dots, r_t \in R$, there is no reason to expect that $\ker \varphi$ is a finitely generated left $R$-submodule of $\mathbf{F}_t$. Assume, however, that $\ker \varphi$ is finitely generated as a left $R$-module ($M$ is then said to be \emph{finitely presented}). If $k_1, \dots, k_s$ are generators of $\ker \varphi$, then $k_i = \sum_j a_{ij}\mathbf{e}_j$ for some coefficients $a_{ij} \in R$. The matrix $A_{ \psi} = (a_{ij}) \in R^{s \times t}$ defines a homomorphism $\psi : \mathbf{F}_s \to \mathbf{F}_t$. Since, by Noether's first isomorphism theorem, $M$ is isomorphic to the factor module $\mathbf{F}_t/\ker \varphi$, we see that $M$ is determined, up to isomorphisms, by the $s \times t$ matrix $A_{ \psi} = (a_{ij})$. 
 We say then that 
\begin{equation}\label{finpres}
\xymatrix{\mathbf{F}_s \ar^-{\psi}[r] & \mathbf{F}_t \ar^-{\varphi}[r] & M \ar[r] & 0}
\end{equation}
is a \emph{finite presentation} of $M$.  

\begin{definition}
A ring $R$ is said to be \emph{left noetherian} if every finitely generated left $R$-module has a finite presentation as in \eqref{finpres}. 
\end{definition}

The precedent discussion shows that a finitely generated left module over a left noetherian ring $R$ is essentially the same  as  a matrix with coefficients in $R$. However, this leads to the computational problem of deciding when two matrices represent isomorphic left $R$-modules. This is a difficult problem, tightly related to the existence of canonical or normal forms for matrices with coefficients in $R$. 

A well known characterization says that $R$ is left noetherian if and only if every left ideal of $R$ is finitely generated (see, e.g., \cite[Corollary 3.6]{Stenstrom:1975}). From the effective point of view, left noetherian rings are good because every finitely generated left $R$-module $M$ may be represented by a matrix with entries in $R$, as before. In fact, $M$ is isomorphic to $R^t/row (A_{\psi})$, where $row (A_{\psi})$ is the $R$-submodule of $R^t$ generated by the rows of $A_{ \psi}$.   

\subsubsection{Presentations of Homomorphisms.}
What about morphisms? A homomorphism $h : M \to N$ between finitely presented left $R$-modules leads to a diagram of homomorphisms of left $R$-modules
\begin{equation}\label{morfismos}
\xymatrix{\mathbf{F}_s \ar^-{\psi}[r] \ar^-{p}[d] & \mathbf{F}_t \ar^-{\varphi}[r] \ar^-{q}[d] & M \ar[r] \ar^{h}[d] & 0 \\
\mathbf{F}_{s'} \ar^-{\psi'}[r] & \mathbf{F}_{t'} \ar^-{\varphi'}[r] & N \ar[r] & 0 \rlap{\,.} }
\end{equation}
Here, the  bottom  row is a finite presentation of $N$, and $p, q$ are homomorphisms of left $R$-modules satisfying $h \varphi = \varphi' q$ and $q \psi = \psi' p$ (the diagram is then said to be \emph{commutative}).  

The construction of the morphisms $p, q$ from $h$ goes as follows. If $n_1, \dots, n_{t'}$ are generators of $N$, then $h(m_i) = \sum_{j} q_{ij}n_j$ for some $q_{ij} \in R$. The morphism $q$ is then defined by the $t \times t'$ matrix $(q_{ij})$.  Since $q$ induces, by restriction, a morphism from $\ker \varphi$ to $\ker \varphi'$, a similar procedure defines $p$. Conversely, given morphisms $p, q$ as in diagram \eqref{morfismos}, with $q \psi = \psi' p$, then $h : M \to N$ is well-defined by the rule $h(\varphi (v)) = \varphi' ( q (v))$ for every $v \in \mathbf{F}_t$. 

As a consequence, once fixed bases in $\mathbf{F}_s, \mathbf{F}_t, \mathbf{F}_{s'}$ and $\mathbf{F}_{t'}$, the morphisms $p, q$ are determined by matrices $A_p, A_q$ with coefficients in $R$. We get thus, as in \cite[Corollary 2.1]{Cluzeau/Quadrat:2008}, that a morphism $h : M \to N$ is defined by a pair of matrices $Q, P$ such that $A Q = P A' $, where $A = A_{\psi}$, $A'=A_{\psi'}$, $Q = A_q$ and $P = A_p$.

\subsection{Syzygies}
The effective treatment of modules and their homomorphisms has been developed over different kinds of polynomial rings (commutative or not)  in  many places (see, among others, \cite{Adams/Loustaunau:1994,Apel/Lassner:1988,Bueso/Gomez/Verschoren:2003,Kredel:1993,Levandovskyy:2005}). Our next aim is to distill the essence,   for a general ring $R$, of these algorithmic approaches.  This philosophy of abstracting the categorical component of a constructive approach to modules from the specific shape of the base ring is present in the proposal of   the  ``meta-package''  \emph{homalg} in \cite{Barakat/Robertz:2008}.  A similar substratum underlies other works, like \cite{Cluzeau/Quadrat:2008}.

Let $\psi : \mathbf{F}_t \to \mathbf{F}_m$ be a homomorphism of finitely generated free left $R$-modules. A basic problem is the computation of a presentation of the kernel of $\psi$. This kernel is the so called \emph{module of syzygies} of $\psi$. 

Let $A_{\psi} \in R^{t \times m}$ be the matrix representing $\psi$ with respect to the bases $B_t = \{\gordo{u}_1, \dots, \gordo{u}_t\}$ and $B_m=\{\gordo{e}_1, \dots, \gordo{e}_m\}$ of $\mathbf{F}_t$ and $\mathbf{F}_m$, respectively.  In coordinates with respect to $B_t$, $\ker \psi$ is encoded by  the matrix equation $$0 = (h_1, \dots, h_t)A_{\psi},$$ whose set of solutions is a left $R$-submodule of $R^t$, called the \emph{module of syzygies of $A_{\psi}$}, and denoted by $\Syz(A_{\psi})$. If $\{\gordo{s}_1, \dots, \gordo{s}_s \}$ is a set of generators of this module, then  its elements   are the coordinates with respect to $B_t$ of a finite set of generators of $\ker \psi$, and if we build the matrix $\mathsf{syz}(A_{\psi})\in R^{s \times t}$ whose rows are the vectors $\gordo{s}_i$, then this matrix gives a presentation of $\Ima \psi$.  

Assume that there is another homomorphism $\hat{\psi} : \mathbf{F}_{\hat{t}} \to \mathbf{F}_m$, and morphisms (always of left $R$-modules) $\hat{p} : \mathbf{F}_t \to \mathbf{F}_{\hat{t}}, p : \mathbf{F}_{\hat{t}} \to \mathbf{F}_t$ such that $\hat{\psi} \hat{p} = \psi$ and $\psi p = \hat{\psi}$. Then it is immediate that $\Ima (id - p\hat{p}) \subseteq \ker \psi$ and that, for $x \in \mathbf{F}_t$, $x \in \ker \psi$ if and only if $\hat{p}(x) \in \ker \hat{\psi}$. From these facts, one easily derives that 
\begin{equation}\label{nucleo}
\ker \psi = p( \ker \hat{\psi}) + \Ima (id - p\hat{p})\,.
\end{equation}
The expression \eqref{nucleo} may be used to compute $\ker \psi$ if $\ker \hat{\psi}$ is explicitly given (for instance,  if $\hat{\psi}$ is a sort of normal form of $\psi$). 
Working in coordinates with respect to bases in the different free modules involved, we obtain 

\begin{equation}\label{syzcomp}
\mathsf{syz}(A_{\psi}) = \left( \begin{array}{c} \mathsf{syz}(A_{\hat{\psi}})A_{p} \\ I_t-A_{\hat{p}}A_{p} \end{array} \right),
\end{equation}
where $A_{\hat{p}} \in R^{t \times \hat{t}}$ and $A_{p} \in R^{\hat{t} \times t}$ are such that $A_{\psi} = A_{\hat{p}}A_{\hat{\psi}}$ and $A_{\hat{\psi}} = A_{p}A_{\psi}$.
We will have the opportunity of applying \eqref{syzcomp} to obtain algorithms for the computation of the submodule of syzygies  in  some more concrete situations later.  Of course, one may expect more efficient alternatives to this general scheme (after all, $R$ is here \emph{any} ring) in specific situations. 

\subsection{Images and Kernels}
Let $h: M \to N$ be a homomorphism of left $R$-modules. Our next aim is to obtain a presentation of $\Ima h = \{ h(m) : m \in M\}$. We will first deal with the particular case where $M = \textbf{F}_m$ is a finitely generated free left module. The presentation of a general $h$ given in \eqref{morfismos} may then be simplified to
\[
\xymatrix{\langle 0 \rangle \ar^-{0}[r] \ar^-{0}[d] & \textbf{F}_m \ar@{=}[r] \ar^-{q}[d] & \mathbf{F}_m \ar[r] \ar^{h}[d] & 0 \\
\mathbf{F}_{s'} \ar^-{\psi'}[r] & \mathbf{F}_{t'} \ar^-{\varphi'}[r] & N \ar[r] & 0\rlap{\,.} }
\]
Then $h = \varphi' q$, and we need just to compute a presentation of the kernel of $\varphi' q$. 

\begin{lemma}\label{kernelcomp}
A presentation of the kernel of $\xymatrix{\mathbf{F}_m \ar^{q}[r] & \mathbf{F}_{t'} \ar^{\varphi'}[r] & N }$ is given by the first $m$ columns of the matrix
\[
\mathsf{syz} \left( \begin{array}{c} A_q \\ A_{\psi'}\end{array}\right).
\]
\end{lemma}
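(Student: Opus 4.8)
The plan is to identify $\ker(\varphi' q)$ with a fibre product and then translate it into a syzygy computation. First I would observe that an element $\gordo{v} \in \mathbf{F}_m$ lies in $\ker(\varphi' q)$ precisely when $q(\gordo{v}) \in \ker \varphi' = \Ima \psi'$, i.e. when there exists $\gordo{w} \in \mathbf{F}_{s'}$ with $q(\gordo{v}) = \psi'(\gordo{w})$. Thus $\ker(\varphi' q)$ is the image, under the projection $\mathbf{F}_m \oplus \mathbf{F}_{s'} \to \mathbf{F}_m$, of the submodule
\[
P = \{ (\gordo{v}, \gordo{w}) \in \mathbf{F}_m \oplus \mathbf{F}_{s'} : q(\gordo{v}) = \psi'(\gordo{w}) \}.
\]
The point of passing to $P$ is that it is itself a module of syzygies: in coordinates with respect to the chosen bases, $(\gordo{x}, \gordo{y}) \in R^m \times R^{s'}$ belongs to $P$ iff $\gordo{x} A_q = \gordo{y} A_{\psi'}$, equivalently $(\gordo{x}, -\gordo{y}) \left( \begin{smallmatrix} A_q \\ A_{\psi'} \end{smallmatrix} \right) = 0$. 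Hence $P$, up to the sign change $\gordo{y} \mapsto -\gordo{y}$ (an automorphism of $\mathbf{F}_{s'}$), is exactly $\Syz\left( \begin{smallmatrix} A_q \\ A_{\psi'} \end{smallmatrix} \right)$, a left $R$-submodule of $R^{m+s'}$.

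Next I would invoke the discussion of syzygies from the previous subsection: the matrix $\mathsf{syz}\left( \begin{smallmatrix} A_q \\ A_{\psi'} \end{smallmatrix}\right) \in R^{r \times (m+s')}$ has rows forming a generating set $\{\gordo{s}_1, \dots, \gordo{s}_r\}$ of $\Syz\left( \begin{smallmatrix} A_q \\ A_{\psi'} \end{smallmatrix}\right)$, and therefore a presentation of that syzygy module as a quotient $R^r \to \Syz(\cdots)$. Writing each $\gordo{s}_i = (\gordo{s}_i', \gordo{s}_i'')$ with $\gordo{s}_i' \in R^m$ and $\gordo{s}_i'' \in R^{s'}$, the projection onto the first factor sends the generators $\gordo{s}_i$ to the $\gordo{s}_i'$. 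Since $\ker(\varphi' q)$ corresponds under coordinates to the image of this projection applied to $P$, its coordinate submodule of $R^m$ is generated by the vectors $\gordo{s}_1', \dots, \gordo{s}_r'$ — precisely the first $m$ columns of $\mathsf{syz}\left( \begin{smallmatrix} A_q \\ A_{\psi'} \end{smallmatrix}\right)$. (Here one uses that the morphism $\mathbf{F}_m \cong \ker(0) \to \mathbf{F}_m$ is the identity, so no further syzygies of the source intervene; this is where the simplified diagram with $\langle 0 \rangle$ on the top-left matters.)

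It remains to argue that these generators in fact give a \emph{presentation} of $\ker(\varphi' q)$, not merely a generating set; but in the running convention of the paper a matrix "gives a presentation" of the submodule of $R^m$ generated by its rows in the sense of $\mathbf{F}_r \to R^m$ with image that submodule — i.e. we are presenting $\ker(\varphi' q)$ as a quotient of a free module, and that is exactly what the rows-of-$\mathsf{syz}$ construction provides, as recorded just before Lemma \ref{kernelcomp}. So the verification reduces to the two inclusions between $\ker(\varphi' q)$ and $\langle \gordo{s}_1', \dots, \gordo{s}_r' \rangle$, both of which follow from the fibre-product description above.

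The main obstacle I anticipate is bookkeeping rather than conceptual: one has to be careful that "the first $m$ columns" of $\mathsf{syz}\left( \begin{smallmatrix} A_q \\ A_{\psi'} \end{smallmatrix}\right)$ really corresponds to the $\mathbf{F}_m$-component under the left-module/row-vector conventions the paper uses (where $\psi$ acts by $\gordo{x} \mapsto \gordo{x} A_\psi$ and composition reverses), and that the sign twist $\gordo{y} \mapsto -\gordo{y}$ does not affect which submodule of $R^m$ is obtained. Once the indexing is pinned down, the proof is a direct unwinding of the definition of $\Syz$ together with the characterization $\gordo{v} \in \ker(\varphi' q) \iff q(\gordo{v}) \in \Ima \psi'$.
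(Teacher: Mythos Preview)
Your proposal is correct and follows essentially the same approach as the paper: identify $\ker(\varphi' q)$ as the projection onto $\mathbf{F}_m$ of the kernel of the combined map $\langle q,\psi'\rangle:\mathbf{F}_m\oplus\mathbf{F}_{s'}\to\mathbf{F}_{t'}$, handle the sign twist, and conclude that the first $m$ components of a generating set for $\Syz\left(\begin{smallmatrix}A_q\\A_{\psi'}\end{smallmatrix}\right)$ generate $\ker(\varphi' q)$. The paper's proof is just a more compressed version of what you wrote, without the explicit fibre-product terminology.
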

\begin{proof}
 Given  $y \in \mathbf{F}_{m}$ we have that $\varphi' q (y) = 0$ if and only if $q(y) \in \ker \varphi' = \Ima \psi '$. Thus, $y \in \ker \varphi' q$ if and only if there is $x \in \mathbf{F}_{s'}$ such that $ q (y) = \psi' (x)$.  Let $\langle q, -\psi' \rangle : \mathbf{F}_m \times \mathbf{F}_{s'} \to \mathbf{F}_{t'}$ be defined by 
\[
\langle  q , -\psi' \rangle (y, x) = q(y) - \psi'(x), \qquad \text{ for all } (y,x) \in \mathbf{F}_m \times \mathbf{F}_{s'}\,.
\]
We thus deduce that
\[
\ker \varphi' q = \{ y \in \mathbf{F}_m : \exists x \in \mathbf{F}_{s'}  \hbox{ with } (y,x) \in \ker \langle q, - \psi' \rangle \},
\]
and, since $(y,x) \in \ker \langle q, -\psi' \rangle$ if and only if $(y, -x) \in \ker \langle q, \psi' \rangle$, we get
\[
\ker \varphi' q = \{ y \in \mathbf{F}_m : \exists x \in \mathbf{F}_{s'}  \hbox{ with } (y,x) \in \ker \langle q, \psi' \rangle \}\,.
\]
Thus, any set of generators $\{(y_1, x_1), \dots, (y_r,x_r) \}$ of $\ker  \langle q, \psi' \rangle$ will give a set of generators $\{y_1, \dots, y_r \}$ of $\ker \varphi' q$. This finishes the proof.  
\end{proof}

Let us turn now to a general $h : M \to N$ with presentation
\begin{equation}\label{morfismosbis}
\xymatrix{\mathbf{F}_s \ar^-{\psi}[r] \ar^-{p}[d] & \mathbf{F}_t \ar^-{\varphi}[r] \ar^-{q}[d] & M \ar[r] \ar^{h}[d] & 0 \\
\mathbf{F}_{s'} \ar^-{\psi'}[r] & \mathbf{F}_{t'} \ar^-{\varphi'}[r] & N \ar[r] & 0\rlap{\,.} }
\end{equation}

As a consequence of Lemma \ref{kernelcomp} we obtain a procedure to compute a presentation of the left $R$-module $\Ima h = \{ h(m) : m \in M \}$, from the presentation of $h$ given in \eqref{morfismosbis}. 

\begin{proposition}\label{prop:Im}
A presentation of $\Ima h$ is given by the first $t$ columns of the matrix
\[
\mathsf{syz}\left( \begin{array}{c} A_q \\ A_{\psi'}\end{array}\right).
\]
\end{proposition}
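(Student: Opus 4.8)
The plan is to reduce the computation of $\Ima h$ for a general $h$ to the situation already handled in Lemma \ref{kernelcomp}. The key observation is that $\Ima h$ can be presented using the diagram \eqref{morfismosbis}: since $\varphi : \mathbf{F}_t \to M$ is surjective, we have $\Ima h = \Ima (h\varphi) = \Ima (\varphi' q)$, because $h\varphi = \varphi' q$ by commutativity of the diagram. Thus $\Ima h$ is exactly the image of the composite $\xymatrix{\mathbf{F}_t \ar^{q}[r] & \mathbf{F}_{t'} \ar^{\varphi'}[r] & N}$, which has the same shape as the map treated in Lemma \ref{kernelcomp} (with $m$ there replaced by $t$ here).

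Next I would invoke Noether's first isomorphism theorem: $\Ima(\varphi' q) \cong \mathbf{F}_t / \ker(\varphi' q)$. Hence a finite presentation of $\Ima h$ is obtained from a finite presentation of the kernel of $\varphi' q$, in the sense explained in Section \ref{sec:modules}: if $\{z_1, \dots, z_r\}$ is a set of generators of $\ker(\varphi' q) \subseteq \mathbf{F}_t$, and we write each $z_i$ in coordinates with respect to the chosen basis of $\mathbf{F}_t$, then the matrix whose rows are these coordinate vectors is a presentation matrix of $\mathbf{F}_t/\ker(\varphi' q) \cong \Ima h$. Applying Lemma \ref{kernelcomp} directly (with $\mathbf{F}_t$ in place of $\mathbf{F}_m$), such a set of generators of $\ker(\varphi' q)$ is given, in coordinates, by the first $t$ columns of the matrix $\mathsf{syz}\left(\begin{array}{c} A_q \\ A_{\psi'}\end{array}\right)$. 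Therefore the same matrix — the first $t$ columns of $\mathsf{syz}\left(\begin{array}{c} A_q \\ A_{\psi'}\end{array}\right)$ — is a presentation of $\Ima h$, which is exactly the claimed statement.

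There is essentially no hard step here: the proposition is a near-immediate corollary of Lemma \ref{kernelcomp} once one notices $\Ima h = \Ima(\varphi' q)$. The only point requiring a little care is the bookkeeping of indices — confirming that the role played by $m$ in Lemma \ref{kernelcomp} is played by $t$ in \eqref{morfismosbis}, so that it is indeed the \emph{first $t$} columns (and not, say, the first $s$ or $t'$) that one must extract — and observing that passing from a generating set of a submodule $K \subseteq \mathbf{F}_t$ to a presentation matrix of $\mathbf{F}_t/K$ is precisely the construction recalled after \eqref{finpres}. I would also remark that, implicitly, this presupposes that $\ker(\varphi' q)$ is finitely generated, which holds whenever $R$ is left noetherian (and, in the constructive situations of later sections, whenever the relevant syzygy module can be computed); this is the same standing hypothesis already in force for the finite presentations in \eqref{morfismos} and \eqref{morfismosbis}. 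The whole argument can therefore be written in a few lines.
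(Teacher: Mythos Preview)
Your proposal is correct and follows exactly the same route as the paper's proof: observe that $\Ima h = \Ima(\varphi' q)$ because $\varphi$ is surjective and $h\varphi = \varphi' q$, then invoke Lemma~\ref{kernelcomp}. The paper compresses this into two lines, while you spell out the intermediate step through Noether's isomorphism theorem and add useful remarks on the index bookkeeping and the implicit noetherian hypothesis, but the substance is identical.
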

\begin{proof}
Since $h \varphi = \varphi' q$, and $\varphi$ is surjective, we get that $\Ima \varphi' q = \Ima h$. The result follows from Lemma \ref{kernelcomp}.
\end{proof}

Finally, let us describe the kernel of a general morphism of left $R$-modules.

\begin{proposition}\label{prop:ker}
Let $h : M \to N$ be presented as in \eqref{morfismosbis}, and $S \in R^{r \times t}$ the matrix formed by the $t$ first columns of $\mathsf{syz}\left( \begin{array}{c} A_q \\ A_{\psi'}\end{array}\right)$. A presentation of $\ker h$ is given by the first $r$ columns of the matrix
\[
\mathsf{syz}\left( \begin{array}{c} S \\ A_{\psi}\end{array}\right).
\]
\end{proposition}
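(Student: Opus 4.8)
The plan is to reduce the problem to Proposition \ref{prop:Im} by exhibiting $\ker h$ as the image of a morphism whose source is a free module. First I would name that morphism: let $\sigma \colon \mathbf{F}_r \to \mathbf{F}_t$ be the homomorphism of left $R$-modules represented, with respect to the bases already chosen, by the matrix $S$. Since $S$ consists of the first $t$ columns of $\mathsf{syz}\left( \begin{smallmatrix} A_q \\ A_{\psi'}\end{smallmatrix}\right)$, Lemma \ref{kernelcomp} applied to the composite $\mathbf{F}_t \xrightarrow{q} \mathbf{F}_{t'} \xrightarrow{\varphi'} N$ (so that the role of $\mathbf{F}_m$ there is played by $\mathbf{F}_t$) tells us that the rows of $S$ generate $\ker(\varphi' q)$ as a submodule of $\mathbf{F}_t$; in other words $\Ima \sigma = \ker(\varphi' q)$.

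Next I would identify $\ker h$ with $\Ima(\varphi \sigma)$. Because the diagram \eqref{morfismosbis} commutes, $h \varphi = \varphi' q$, and $\varphi$ is onto; hence, for $m \in M$, writing $m = \varphi(v)$ with $v \in \mathbf{F}_t$, we get $h(m) = \varphi' q(v)$, so $m \in \ker h$ if and only if $v$ may be chosen in $\ker(\varphi' q)$. That is, $\ker h = \varphi(\ker(\varphi' q))$, and combining this with the previous step, $\ker h = \varphi(\Ima \sigma) = \Ima(\varphi \sigma)$, with $\varphi \sigma \colon \mathbf{F}_r \to M$.

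It then remains to apply Proposition \ref{prop:Im} to the morphism $\varphi \sigma \colon \mathbf{F}_r \to M$. Its source $\mathbf{F}_r$ is free, hence presented by the degenerate sequence $\langle 0 \rangle \to \mathbf{F}_r \xrightarrow{\mathrm{id}} \mathbf{F}_r \to 0$ (the simplification already used just before Lemma \ref{kernelcomp}); the given presentation $\mathbf{F}_s \xrightarrow{\psi} \mathbf{F}_t \xrightarrow{\varphi} M \to 0$ of the target supplies the lower row; and $\sigma$ itself is a lift of $\varphi \sigma$, with matrix $A_\sigma = S$. Since the relation map of the target $M$ is $\psi$, Proposition \ref{prop:Im} gives that a presentation of $\Ima(\varphi \sigma) = \ker h$ is the submatrix formed by the first $r$ columns of $\mathsf{syz}\left( \begin{smallmatrix} A_\sigma \\ A_\psi\end{smallmatrix}\right) = \mathsf{syz}\left( \begin{smallmatrix} S \\ A_\psi\end{smallmatrix}\right)$, which is exactly the claim.

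The whole argument is bookkeeping with the two exact rows of \eqref{morfismosbis}, so I do not expect a genuine obstacle; the only point requiring care is the double role of $S$ — via Proposition \ref{prop:Im} it is a relation matrix \emph{presenting} $\Ima h$, whereas the proof above uses the reading coming from Lemma \ref{kernelcomp}, namely that its rows \emph{generate} the submodule $\ker(\varphi' q) \subseteq \mathbf{F}_t$ — together with the (harmless) verification that Proposition \ref{prop:Im} does apply when the source of the morphism is free.
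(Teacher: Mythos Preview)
Your argument is correct and follows essentially the same route as the paper: both identify $\ker h$ with $\varphi(\ker(\varphi' q))$, use Lemma~\ref{kernelcomp} to produce a surjection $\mathbf{F}_r \to \ker(\varphi' q)$ via the matrix $S$, and then compute the presentation of the resulting image in $M$ by a second syzygy. The only cosmetic difference is that you invoke Proposition~\ref{prop:Im} directly for the last step, whereas the paper phrases it as adapting the proof of Lemma~\ref{kernelcomp}; your formulation is arguably cleaner.
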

\begin{proof}
Since $\ker \varphi \subseteq \ker h \varphi$, we get that the restriction of $\varphi$ to $\ker h \varphi$ defines a surjective homomorphism of left $R$-modules $\overline{\varphi}: \ker h\varphi \to \ker h$. Moreover, $\ker \overline{\varphi} = \ker \varphi$.  On the other hand, $\ker h \varphi = \ker \varphi' q$, and, by Lemma \ref{kernelcomp}, the matrix $S$ gives a surjective homomorphism of left $R$-modules $s: \mathbf{F}_r \to \ker \varphi' q = \ker h \varphi$.
In resume, a presentation of $\ker h$ is computed as soon as the kernel of the surjective homomorphism
\[
\xymatrix{\textbf{F}_r \ar^-{s}[r] & \ker h\varphi \ar^{\overline{\varphi}}[r] & \ker h}
\]
is computed. Finally, $\ker \overline{\varphi} = \ker \varphi = \Ima \psi$, and the proof of Lemma \ref{kernelcomp} is easily adapted to describe the coordinates of a set of generators of $\ker \overline{\varphi} s$ as the first $r$ rows of the matrix $\mathsf{syz}\left( \begin{array}{c} S \\ A_{\psi}\end{array}\right)$. 
\end{proof}

\section{Modules over $\field{D}[ x ;\sigma,\delta]$}\label{sec:Ore}

Modules over an Ore extension (see the definition below) may be understood  as  pseudo-linear operators (see \cite{Leroy:1995} and \cite{Bronstein/Petkovsek:1991}). Thus, the study of the structure of these modules is important for the understanding and algorithmic treatment of pseudo-linear operators and, in particular, for (linear) ordinary differential operators.   

\subsection{Ore Extensions}

We first recall a basic construction in Ring Theory, namely that of Ore extension of a given ring.  These rings  were introduced by \O. Ore in \cite{Ore:1933}. 

\subsubsection{Definition of an Ore Extension.} Let $A$ be any ring, $\sigma : A \to A$ an endomorphism of rings, and $\delta: A \to A$ is a $\sigma$-derivation, that is, for all $a, b \in A$, 
\begin{equation}\label{dtorcida}
\delta (a+b) = \delta(a) + \delta(b), \quad \delta(ab) = \sigma(a)\delta(b) + \delta(a)b\,.
\end{equation}
 The construction of the \emph{Ore extension $R = A[ x ;\sigma,\delta]$ of $A$ by $(\sigma,\delta)$} goes as follows:
\begin{itemize}
\item $R$ is a free left $A$-module on the basis $\{  x ^n : n \geq 0 \}$. Thus, the elements of $R$ are \emph{left polynomials} of the form $a_0 + a_1 x  + \cdots + a_n x ^n$, with $a_i \in A$. 
\item The sum of polynomials is as usual. 
\item The product  of  $R$ is based on the following product rules: $ x ^n x ^m =  x ^{n+m}$, for $m, n \in \mathbb{N}$, and $ x  a = \sigma(a) x  + \delta(a)$ for $a \in A$.  This product is extended recursively to $R$. 
\end{itemize}

\begin{remark}
That the product just defined on $A[x;\sigma,\delta]$ is associative is not completely obvious. A proof may be found in \cite[Ch. 1]{Bueso/Gomez/Verschoren:2003}, for instance.  
\end{remark}

\begin{remark}
In our definition of $A[x;\sigma,\delta]$ there is a choice of the side, since we assume that $\{x^n : n \geq 0 \}$ is a basis of the Ore extension as a \emph{left} $A$-module. Of course, one may prefer to work by assuming that $\{x^n : n \geq 0\}$ is a basis as a right $A$-module (this is, for instance, the choice in \cite{McConnell/Robson:1988} or \cite{Cohn:1971}). In such a case, the skew derivation $\delta$ should satisfy $\delta(ab) = \delta(a)\sigma(b) + a \delta(b)$ for all $a,b \in A$, instead of \eqref{dtorcida}. Both choices lead to equivalent theories,  by replacing $A$ by its opposite ring $A^{op}$. However, once a side is fixed, say our ``left'' choice, we cannot assume that the ``monomials'' $x^n$ form a basis of the Ore extension as a right $A$-module (therefore, its elements cannot be understood as right polynomials), unless $\sigma$ is an autormorphism (see \cite[Proposition 3.9, Ch. 1]{Bueso/Gomez/Verschoren:2003}).
\end{remark}

Let $f \in A[x;\sigma,\delta]$ be a nonzero element, and consider its unique expression as a left $A$-linear combination of the elements of the basis $\{x^n : n \geq 0 \}$,
\[
f = a_0 + a_1 x  + \cdots + a_d  x^d,
\]
with $a_0,a_1, \dots, a_d \in A$ and $a_d \neq 0$. The \emph{degree} of $f$ is defined as $\deg(f) = d$.

If $A$ is a domain (that is, $ab = 0$ with $a, b \in A$ implies $a= 0$ or $b = 0$), and $\sigma$ is injective, then $\deg(fg) = \deg(f) + \deg(g)$ for every $f, g \in A[ x ;\sigma,\delta]$. We are assigning $\deg{0} = - \infty$, with the usual conventions for the symbol $-\infty$ with respect to the ordering and addition of integers. The \emph{leading monomial} $\lm(f)$ and \emph{leading coefficient} $\lc{f}$ are defined in the obvious way. 

Two special cases of Ore polynomials are of interest. If $\delta = 0$,  then  it is usually written $R = A[x;\sigma]$, and if $\sigma$ is the identity, it is omitted, and we denote $R = A[x;\delta]$.  The latter, when $(A,\delta)$ is a commutative \emph{differential ring} with derivation $\delta$ (see, e.g. \cite{VanDerPut/Singer:2003}), gives the connection between the Module Theory, and the differential modules. 

 In this section, we will study modules over an Ore extension $\field{D}[x;\sigma,\delta]$, where $\field{D}$ is a skew field.  

\subsubsection{Euclidean Pseudo-Division.}

A (non-commutative) domain $A$ is a \emph{left Ore domain} if $Aa \cap Ab \neq 0$ for all nonzero $a, b \in A$. Left Ore domains have a left ring of fractions  which is  a skew field (\cite[Example II.3]{Stenstrom:1975}). Every left noetherian domain is left Ore \cite[Proposition II.1.7]{Stenstrom:1975}. The fundamental property of Ore extensions of skew fields is that they have a left Euclidean Division Algorithm. As in the commutative case, it may be deduced from the following left Pseudo-Division. We include its easy proof because we find interesting to see that the Ore condition is already needed.  
 
\begin{proposition}{[}Left Pseudo-Division{]}\label{Pseudodivision} Let
$A$ be a left Ore domain and $f,g\in A[ x ;\sigma,\delta]$. If $g\neq0$,
and $\sigma$ is injective, then there exist a nonzero element $a\in A$
and polynomials $q,r\in A[ x ;\sigma,\delta]$ such that $af=qg+r$
and $\deg(r)<\deg(g)$. \end{proposition}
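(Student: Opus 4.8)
The plan is to prove the statement by induction on $\deg(f)$. First I would dispose of the trivial cases: if $f = 0$, take $a = 1$, $q = r = 0$; and if $\deg(f) < \deg(g)$, take $a = 1$, $q = 0$, $r = f$. So assume $d := \deg(f) \geq e := \deg(g)$, write $f = \alpha x^d + (\text{lower terms})$ and $g = \beta x^e + (\text{lower terms})$ with $\alpha, \beta \in A$ nonzero.

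The idea is to cancel the leading term of $f$ after multiplying on the left by a suitable nonzero scalar. The term $x^{d-e} g$ has degree $d$, and its leading coefficient is $\sigma^{d-e}(\beta)$, which is nonzero because $\sigma$ is injective and $\beta \neq 0$. Here is where the Ore condition enters: since $\alpha \neq 0$ and $\sigma^{d-e}(\beta) \neq 0$ generate a nonzero intersection of left ideals, there exist $b, c \in A$ with $b\alpha = c\,\sigma^{d-e}(\beta) \neq 0$ (we may arrange this common value to be nonzero, since $A$ is a domain and we can choose $b \neq 0$ with $b\alpha \in A\sigma^{d-e}(\beta)$). Now consider
\[
f_1 = bf - c\, x^{d-e} g.
\]
Both $bf$ and $c\,x^{d-e}g$ have degree $d$ with the same leading coefficient $b\alpha = c\,\sigma^{d-e}(\beta)$, so the degree-$d$ terms cancel and $\deg(f_1) < d$.

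By the induction hypothesis applied to $f_1$ and $g$, there exist a nonzero $a_1 \in A$ and $q_1, r \in A[x;\sigma,\delta]$ with $a_1 f_1 = q_1 g + r$ and $\deg(r) < \deg(g)$. Substituting the definition of $f_1$ gives
\[
a_1 b f - a_1 c\, x^{d-e} g = q_1 g + r,
\]
hence $(a_1 b) f = (a_1 c\, x^{d-e} + q_1) g + r$. Since $a_1 \neq 0$ and $b \neq 0$ and $A$ is a domain, $a := a_1 b \neq 0$; setting $q := a_1 c\, x^{d-e} + q_1$ completes the induction.

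The main obstacle — really the only subtle point — is the very first use of the Ore condition: producing the scalars $b, c$ with $b\alpha = c\,\sigma^{d-e}(\beta)$ and, crucially, ensuring this common element is \emph{nonzero} so that the leading terms genuinely cancel rather than the whole argument collapsing. This is exactly why one needs $A$ to be a left Ore domain (so $A\alpha \cap A\sigma^{d-e}(\beta) \neq 0$) rather than merely a domain, and why $\sigma$ injective is needed (to guarantee $\sigma^{d-e}(\beta) \neq 0$). Everything else is bookkeeping with degrees and the recursive definition of multiplication in $A[x;\sigma,\delta]$.
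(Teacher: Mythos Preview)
Your proof is correct and follows essentially the same approach as the paper's: both use induction on $\deg(f)$, invoke the left Ore condition to find nonzero scalars $b,c$ (the paper's $a_1,b_1$) with $b\,\lc{f} = c\,\sigma^{d-e}(\lc{g})$ so that $bf - c\,x^{d-e}g$ has strictly smaller degree, and then conclude by induction. You spell out the reassembly step $(a_1 b)f = (a_1 c\,x^{d-e} + q_1)g + r$ more explicitly than the paper does, but the argument is the same.
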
 \begin{proof} If $\deg(f)<\deg(g)$,
then put $a=1,q=0,r=f$. So, let us assume $\deg(f)\geq\deg(g)$,
and write $m=\deg(f)-\deg(g)$. We will prove that there exist $a_{1},b_{1}\in A \setminus\{0\}$
such that 
\begin{equation}
\deg(a_{1}f-b_{1} x ^{m}g)<\deg(f)\label{psdiv}\,.
\end{equation}
 Then the result follows by induction on $\deg(f)$. To prove \eqref{psdiv},
write 
\[
f=\lc{f} x ^{\deg(f)}+\underline{f},\quad g=\lc{g} x ^{\deg(g)}+\underline{g}
\]
 with $\deg(\underline{f})<\deg(f)$ and $\deg(\underline{g})<\deg(g)$.
By the  left  Ore condition there exist nonzero elements $a_{1},b_{1}\in A$
such that $a_{1}\lc{f}=b_{1}\sigma^{m}(\lc{g})$. On the other hand,
$ x ^{m}\lc{g}=\sigma^{m}(\lc{g}) x ^{m}+h$ with $\deg(h)<m$. Therefore,
\begin{multline*}
a_{1}f-b_{1} x ^{m}g=a_{1}\lc{f} x ^{\deg(f)}-b_{1} x ^{m}\lc{g} x ^{\deg(g)}+a_{1}\underline{f}-b_{1}\underline{g}= \\ a_{1} \underline{f} -b_{1}h-b_{1}g,
\end{multline*}
 and this last polynomial has degree strictly less than $\deg(f)$.
\end{proof}

\begin{remark} We have seen in the proof of Proposition \ref{Pseudodivision}
that in order to compute a  left  pseudo-division in $A[ x ;\sigma,\delta]$
we need to make effective the left Ore  condition   on $A$. This means that given
$c,d\in A$ we should be able to compute nonzero $a,b\in  A $ such
that $ac-bd=0$. This is equivalent to the computation of some nontrivial
element in the kernel of the left $ A$-linear map 
\[
 \varphi:A^{2}\longrightarrow A,\qquad\varphi(a,b)=ac-bd \,.
\]
Thus, we need an algorithm to compute  a nonzero element in  the syzygy module $\Syz\begin{pmatrix}c \\ -d \end{pmatrix}$.  If such an algorithm is available, then we  obtain Algorithm \ref{pseudodivalg} for the computation of the pseudo-division. 
\end{remark}

\begin{algorithm}
\caption{Left Pseudo-Division}\label{pseudodivalg}
\begin{algorithmic}
\REQUIRE$f, g \in A[ x ;\sigma,\delta]$ with $g \neq 0$. 
\ENSURE $0 \neq a \in A, q, r \in A[ x ;\sigma,\delta]$ such that $af = qg + r$ and $\deg (r) < \deg (g)$. 
\INIT  a:= 1, q:= 0, r:= f
\WHILE{$\deg (g) \leqslant \deg (r)$}
\STATE with $ (0,0) \neq  (a_1,b_1) \in \Syz \begin{pmatrix}\lc {r}  \\ - \sigma^{\deg (r) - \deg (g)}(\lc {g}) \end{pmatrix}$,
\STATE $a := a_1 a$,  $q:= q + b_1 x ^{\deg (r) - \deg (g)}$, $r:= a_1r - b_1  x ^{\deg (r) - \deg (g)}g$ \ENDWHILE
\end{algorithmic}
\end{algorithm}

\begin{example}
The  Left  Pseudo-Division  Algorithm  applied  to $f =  x ^3 - t x  + 1, g = t x  - 1 \in \mathbb{C}[t][ x ;d/dt]$ gives
the exact division $t^2f = ( t   x ^2- x -t^2)g$. 
\end{example}

The  Left   Euclidean Division Algorithm for polynomials in $\field{D}[ x ;\sigma,\delta]$, where $\field{D}$ is a skew field, is easily deduced from Proposition \ref{Pseudodivision}.   Explicitly,   for any $f, g \in \field{D}[ x ;\sigma,\delta]$ with $g \neq 0$, there exist uniquely determined polynomials $q, r \in \field{D}[ x ;\sigma,\delta]$ such that $f =qg + r$ and $\deg(r) < \deg(g)$. We will use the notation $r = \lrem(f,g)$ for the \emph{left remainder} $r$ of the division.  Note that $\sigma$ is necessarily injective, since it is an endomorphism of the skew field $\field{D}$. 

A remarkable consequence of the Left Euclidean Division  Algorithm  is that $R = \field{D}[ x ;\sigma,\delta]$ is a left principal ideal domain (left PID), that is, every left ideal of $R$ is principal. Therefore,  if $f, g \in R$, then $Rf + Rg = Rd$, where $d$ is the \emph{right greatest common divisor} of $f$ and $g$, determined up to multiplication on the left by a nonzero element of $\field{D}$. We will use the notation $d = \rgcd(f,g)$. Analogously, the \emph{left least common multiple} of $f, g$ as a polynomial $m \in R$ such that $Rf \cap Rg = Rm$ (notation $m = \llcm(f,g))$. Both $\rgcd(f,g)$ and $\llcm(f,g)$ may be computed by the corresponding non-commutative version of the Extended Euclidean Algorithm (see, e.g.,  \cite[Section I.4]{Bueso/Gomez/Verschoren:2003}).

\subsection{Jordan-H\"older Theorem and Factorization}
We will derive that $\field{D}[ x ;\sigma,\delta]$ is a (non-commutative) unique factorization domain, for $ \field{D}$ a skew field, $\sigma : \field{D} \to \field{D}$ any ring endomorphism, and $\delta : \field{D} \to \field{D}$ a $\sigma$-derivation, from the Jordan-H\"older theorem for modules of finite length. To this end, let $R$ denote any ring, and let us first recall that a left $R$-module $M$ is \emph{simple} if $M \neq \{ 0 \}$ and the only submodules of $M$ are $M$ and $\{ 0 \}$. A left $R$-module $M$ has \emph{finite length}  if there exist a chain of submodules 
\begin{equation}\label{compserie}
\{ 0 \} = M_0 \subset M_1 \subset \cdots \subset M_n = M
\end{equation}
 such that $M_{i}/M_{i-1}$ is simple for every $i= 1, \dots, n$. The sequence \eqref{compserie} is then called \emph{a composition series} of $M$, and the simple factors $M_{i}/M_{i-1}$ are the \emph{composition factors} of the series \eqref{compserie}. Jordan-H\"older Theorem is a standard result in any basic course on Module Theory (see \cite{Anderson/Fuller:1992} for a detailed proof), and asserts that if 
 \[
 \{ 0 \} = N_0 \subset N_1 \subset \cdots \subset N_p = M
 \] 
is another composition series of $M$, then $p = n$ and there exists a permutation $\pi : \{1, \dots, n \} \to \{1, \dots, n \}$ such that $M_{i}/M_{i-1} \cong N_{\pi (i)}/N_{\pi (i) -1}$ for every $i = 1, \dots, n$. Thus, the composition factors are unique up to reordering and isomorphisms, and  they  are called composition factors of the module $M$. The number $n$ is the \emph{length} of $M$.  

It is possible to derive from Jordan-H\"older Theorem a factorization theorem over (two-sided) PID's (see \cite[Theorem 1.2.9]{Jacobson:1996}). Let us illustrate how to apply this idea to $\field{D}[x;\sigma,\delta]$, which is only a left PID (unless $\sigma$ is an automorphism, see \cite[Proposition 1.1.14]{Jacobson:1996}). 

A polynomial  of positive degree  $f \in \field{D}[  x   ; \sigma,\delta]$ is called \emph{irreducible} if for any factorization $f = ab$, then either $a \in \field{D}$ or $b \in \field{D}$. Write $R =\field{D}[  x  ; \sigma,\delta]$. If $f, g \in R$, then $Rf \subseteq Rg$ if and only if $f = f'g$ for some $f' \in R$.  The inclusion is strict if and only if $f' \notin \field{D}$.   This immediately gives that $f$ is irreducible if and only if $Rf$ is a maximal left ideal of $R$ or, equivalently, $R/Rf$ is simple as  a  left $R$-module. Before  stating   the factorization theorem in $\field{D}[  x  ; \sigma,\delta]$, let us give an example that prevents the reader against any  naive   approach to the uniqueness of non-commutative factorizations.

\begin{example}\label{ejemplofactor}
Let $R=\mathbb{C}(t)[ x ;d/dt]$ the differential polynomial ring over the field $\mathbb{C}(t)$ of complex rational functions, where $d/dt$ denotes the usual derivation of polynomials in the variable $t$. There exist infinitely many different factorizations of $ x ^2$ into monic irreducible factors,
\begin{equation}\label{muchasfact}
 x ^2 = \left( x + \frac{1}{t+z}\right)\left( x  - \frac{1}{t+z}\right), \qquad (z \in \mathbb{C})\,.
\end{equation} 
 Thus, the uniqueness in the factorization of Ore polynomials can not be understood in the same sense as it is done in a commutative setting, since none of the monic polynomials $f_z =  x  + 1/(t+z)$ and $g_z =  x  - 1/(t+z)$ differ from $ x $ by multiplication by a unit of $R$. 

Nevertheless, observe that we have isomorphisms of left $R$-modules
\[
\xymatrix@R=5pt{
R/R x  \ar[r] & R/Rf_z, &   r(t) + R x  \ar@{|->}[r] & (t+z)r(t) + Rf_z \\
R/R x  \ar[r] & R/Rg_z, &   r(t) + R x   \ar@{|->}[r]  & r(t)/(t+z) + Rg_z }
\]
This property leads to the following definition.
\end{example}

\begin{definition}
\cite[Chapter 3]{Jacobson:1943} We say that $f, g \in R$ are \emph{similar},  $f \sim g$, if there is an isomorphism of left $R$-modules $R/Rf \cong R/Rg$.
\end{definition}

\begin{theorem}\label{JHfactorization}
Every polynomial $f \in \field{D}[x;\sigma,\delta]$ of positive degree factorizes as $f = f_1 \cdots f_t$, where $f_i \in \field{D}[x;\sigma,\delta]$ is irreducible for every $i=1, \dots, t$. If $f = g_1 \cdots g_s$ is any other such a factorization of $f$, then $s = t$ and there exists a permutation $\pi : \{1, \dots, t \} \to \{ 1, \dots, t \}$ such that $f_i \sim g_{\pi(i)}$ for every $i = 1, \dots, t$. 
\end{theorem}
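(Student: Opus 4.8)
The plan is to deduce Theorem \ref{JHfactorization} from the Jordan--H\"older theorem by translating factorizations of a polynomial $f \in R = \field{D}[x;\sigma,\delta]$ into composition series of the cyclic left module $R/Rf$. First I would establish the dictionary between divisors and submodules: since $R$ has a left Euclidean division, for $f,g \in R$ one has $Rf \subseteq Rg$ if and only if $g$ is a \emph{right} divisor of $f$, i.e. $f = f'g$ for some $f' \in R$, and the inclusion is proper precisely when $f' \notin \field{D}$ (this is exactly the fact recalled just before the theorem). Hence a factorization $f = f_1 f_2 \cdots f_t$ with each $f_i$ of positive degree corresponds to the chain of left ideals
\[
Rf = R(f_1\cdots f_t) \subseteq R(f_2 \cdots f_t) \subseteq \cdots \subseteq Rf_t \subseteq R,
\]
all inclusions being proper, and quotienting by $Rf$ gives a chain of submodules of $R/Rf$
\[
\{0\} = Rf/Rf \subset R(f_2\cdots f_t)/Rf \subset \cdots \subset Rf_t/Rf \subset R/Rf.
\]

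Next I would identify the successive quotients. The key computation is that for $g, h \in R$ the left $R$-module $R(hg)/Rg \cong R h g / R g$ — more precisely, $R(h_{i+1}\cdots h_t)/R(h_i h_{i+1}\cdots h_t)$ — is isomorphic to $R/Rh_i$; one checks this via the surjection $R \to R(f_{i+1}\cdots f_t)/Rf$, $r \mapsto r f_{i+1}\cdots f_t + Rf$, whose kernel is $\{r : r f_{i+1}\cdots f_t \in Rf_i f_{i+1} \cdots f_t\}$, and since $R$ is a domain (degrees add, as $\sigma$ is injective) right-cancellation of $f_{i+1}\cdots f_t$ shows this kernel is exactly $Rf_i$. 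So each successive quotient in the chain is $\cong R/Rf_i$, and by the remark preceding the theorem, $R/Rf_i$ is simple if and only if $f_i$ is irreducible. Therefore factorizations of $f$ into irreducibles correspond bijectively to composition series of $R/Rf$, with the $i$-th factor (read appropriately) being $R/Rf_i$. Existence of a factorization into irreducibles follows because $R/Rf$ has finite length: the chain coming from any factorization into positive-degree factors can be refined, and the process terminates since degrees strictly decrease, giving a composition series and hence irreducible factors; alternatively one argues directly by induction on $\deg f$.

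Finally I would invoke Jordan--H\"older. Given two factorizations $f = f_1\cdots f_t = g_1 \cdots g_s$ into irreducibles, the associated composition series of $R/Rf$ have the same length, so $s = t$, and there is a permutation $\pi$ of $\{1,\dots,t\}$ with the $i$-th composition factor of the first series isomorphic to the $\pi(i)$-th composition factor of the second; unwinding the identification of the factors with $R/Rf_i$ and $R/Rg_j$ respectively, this says $R/Rf_i \cong R/Rg_{\pi(i)}$ as left $R$-modules, i.e. $f_i \sim g_{\pi(i)}$, which is the assertion. The main obstacle — really the only non-formal point — is the module isomorphism identifying the composition factors as $R/Rf_i$, which rests on right-cancellation in the domain $R$ and on being careful about the order in which the factors appear in the chain (the chain built from $f = f_1\cdots f_t$ reads the factors from the right, so the series factor between consecutive terms is governed by $f_i$, not $f_{t+1-i}$); once the bookkeeping is set up correctly, everything else is a direct translation through Noether's first isomorphism theorem and the dictionary between left ideals and right divisors.
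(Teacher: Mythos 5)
Your proposal is correct and follows essentially the same route as the paper: factorizations of $f$ into irreducibles are translated into composition series of the finite-length module $R/Rf$ via the identification of the successive quotients $R(f_{i+1}\cdots f_t)/R(f_i f_{i+1}\cdots f_t)\cong R/Rf_i$, and the Jordan--H\"older theorem then gives $s=t$ and the permutation matching similar factors. The only minor variations are that you get existence by induction on $\deg f$ (making the kernel computation explicit through right-cancellation in the domain $R$), whereas the paper inducts on the length of $R/Rf$ by peeling off a maximal left ideal $Rf_t\supset Rf$; also, the quotient in your displayed surjection should be taken modulo $R(f_i\cdots f_t)$ rather than modulo $Rf$, as your parenthetical ``more precisely'' already indicates.
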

\begin{proof}
Write $R = \field{D}[x;\sigma,\delta]$. By the Left Division Algorithm,  the left $R$-module $R/Rf$ is a left $\field{D}$-vector space of dimension $\deg f$. This clearly implies that $R/Rf$ is a left $R$-module of finite length. We induct on this length $t$. If $t = 1$, then $R/Rf$ is simple and $f$ is irreducible. If $t >1$, then there exits a left maximal ideal $Rf_t$ of $R$ such that $Rf \subset Rf_t \subset R$.  We know that $f = f'f_t$ for some $f' \in R$. Since $Rf_t/Rf = Rf_t/Rf'f_t \cong R/Rf'$, we deduce from the Jordan-H\"older Theorem that the length of $R/Rf'$ equals $t-1$. By induction hypothesis, $f' = f_1 \cdots f_{t-1}$ for some irreducible  polynomials  $f_1, \dots, f_{t-1} \in R$. Therefore, $f = f_1 \cdots f_{t-1} f_t$ and $f_t$ is irreducible since $R/Rf_t$ is simple. This proves the existence of the factorization. As for the uniqueness concerns, the factorizations lead to the composition series for $M = R/Rf$ given by  $M_{i-1} = Rf_i \cdots f_tf_{t+1}/Rf$, and $N_{j-1} = Rg_j \cdots g_sg_{s+1}/Rf$, for $i =1, \dots, t$, $j = 1, \dots, s$, where $f_{t+1} = g_{s+1} = 1$.  The corresponding composition factors are then
\[
M_{i}/M_{i-1} \cong Rf_{i+1}\cdots f_tf_{t+1}/Rf_{i} \cdots f_tf_{t+1} \cong R/Rf_{i},
\]
and 
\[
N_{j}/N_{j-1} \cong Rg_{j+1}\cdots g_sg_{s+1}/Rg_{j} \cdots g_sg_{s+1} \cong R/Rg_{j}\,.
\]
The uniqueness is  directly deduced from the Jordan-H\"older Theorem. 
\end{proof}

\begin{remark}\label{compseriesfactor}
A careful reading of the proof of Theorem \ref{JHfactorization} shows that, for a given monic polynomial $f \in \field{D}[ x ;\sigma,\delta]$ of positive degree, there exists a bijection between the set of composition series of the left $R$-module $R/Rf$ and the set of factorizations of $f$ as a product of irreducible monic polynomials.  
\end{remark}

\begin{remark}
 Since $R = \field{D}[x;\sigma,\delta]$ is a domain,  similarity of polynomials in $R$ is independent on the side. Concretely, given $f, g \in R$, then $R/Rf \cong R/Rg$  as left $R$-modules if and only if $R/fR \cong R/gR$ as right $R$-modules  (see \cite[Lemma 4.11, Ch. 1]{Bueso/Gomez/Verschoren:2003}).
\end{remark}

\subsubsection{Similarity: a Computational Problem.}
From the computational point of view, the following problem arises after Theorem \ref{JHfactorization}. 
\begin{problem}\label{problemasimilar}
How to decide whether a given pair of polynomials $f, g \in R  = \field{D}[x;\sigma,\delta]$ are similar? 
\end{problem}
Of course, we may assume that $f$ and $g$ are monic polynomials of the same degree. Thus, in the commutative case ($\field{D}$ commutative, $\sigma = id_\field{D}$, $\delta = 0$), Problem \ref{problemasimilar} is just to decide whether $f = g$. In the general non-commutative case, things are different. For instance, in the linear case,  $ x -a \sim  x  -b$ if and only if there exists $c \in \field{D}\setminus \{0\}$ such that $b = \sigma(c)ac^{-1} + \delta(c)c^{-1}$. More generally, for polynomials $f, g$ of degree $n\geq 1$ with companion matrices $A$ and $B$, respectively, we have \cite[Proposition 2.4]{Leroy:1995}:
\begin{equation}\label{conjugadas}
f \sim g \iff \exists \; C \in \GL_n( \field{D}) \hbox{ such that } B = \sigma(C)AC^{-1} + \delta(C)C^{-1}\,.
\end{equation}
Similarity of matrices $A, B$ in the sense of \eqref{conjugadas} may be reduced to the corresponding commutative problem in some particular cases, and, henceforth, to the computation of canonical forms of matrices over commutative fields. For instance, N. Jacoboson made such a reduction when $\field{D}$ is finite dimensional over its center  for $\delta = 0$ and $\sigma^m$ an inner automorphism for some power of $\sigma$ (see \cite[Ch. 4, Theorem 4.34]{Jacobson:1943}). The diagonalization of algebraic pseudo-linear transformations, and their associated matrices, is discussed in \cite{Leroy:1995}. These mathematical results deserve, in our opinion, a computational approach. In general, the definition and computation of (e.g. rational) canonical forms of matrices with respect to condition \eqref{conjugadas} seems to be an open problem, tightly related to the election of a canonical representative of the isomorphism class of a module of the form $R/Rf$. 

\subsection{Eigenrings and Factorization}
The second obvious problem from the effective perspective is

\begin{problem}\label{factorizacion}
How to compute a factorization of a given polynomial $f \in R  = \field{D}[x;\sigma,\delta]$? How to test whether $f$ is irreducible?
\end{problem}

Of course, this would depend heavily on the division ring $ \field{D}$.  When $\field{D} = \mathbb{F}$ is a finite field, Problem \ref{factorizacion} has been addressed in \cite{Giesbrecht:1998} for $R = \mathbb{F}[ x ;\sigma]$,  and for $R = \mathbb{F}(t)[ x ;\sigma,\delta]$ in \cite{Giesbrecht/Zhang:2003}. The factorization of differential operators (this is to mean, polynomials over $\mathbb{K}[ x ;\delta]$ for a computable differential field $\mathbb{K}$ with derivation $\delta$) has a long tradition (see the references of \cite{Giesbrecht/Zhang:2003}, \cite{VanDerPut/Singer:2003} on this topic).  However, this problem seems not to be completely  solved even for $\mathbb{K} = \mathbb{Q}(t)$  (the factorization in $\overline{\mathbb{Q}}(t)[x;\delta]$, where $\overline{\mathbb{Q}}$ denotes the algebraic closure of $\mathbb{Q}$, is addressed in \cite{VanHoeij:1997}).

Let us explain how some basic module theory may help to find partial solutions to Problem \ref{factorizacion}.  

Let $f \in R = \field{D}[ x ;\sigma,\delta]$  be  a polynomial of positive degree. We have seen that the factorizations of $f$ are encoded in the structure of the lattice of submodules of the left $R$-module $R/Rf$. Some information on this lattice can be extracted from the ring $\lend{R}{R/Rf}$ of all  left $R$-module endomorphisms of $R/Rf$.  For instance, if $R/Rf$ is simple, then (Schur Lemma) $\lend{R}{R/Rf}$ is a skew field, because every nonzero endomorphism of $R/Rf$ has to be surjective with zero kernel. Unfortunately, the converse is, in general, false (see Example \ref{indescomponible} below).

However, there are examples of Ore extensions $R = \field{D}[ x ;\sigma,\delta]$ for which a left $R$-module $M$ is simple if and only if $\lend{R}{M}$ is a skew field. Remarkably, this is the case  when  $\field{D} = \mathbb{F}$ is a finite field, $\delta = 0$ and, henceforth, $\sigma$ is an autormorphism of $\mathbb{F}$. M. Giesbrecht developed in \cite{Giesbrecht:1998} a factorization algorithm for polynomials in $\mathbb{F}[ x ;\sigma]$ ultimately based in the fact that $f \in \mathbb{F}[ x ;\sigma]$ is irreducible if and only if $\lend{\mathbb{F}[ x ;\sigma]}{\mathbb{F}[ x ;\sigma]/\mathbb{F}[ x ;\sigma]f}$ is a field (see \cite[Theorem 3.3]{Giesbrecht:1998}). Moreover, when $\lend{\mathbb{F}[ x ;\sigma]}{\mathbb{F}[ x ;\sigma]/\mathbb{F}[ x ;\sigma]f}$ is not a skew field, then it contains some zero divisor that serves to find a factorization of $f$.

Some algorithms that use the eigenring for factoring pseudo-linear operators in the context of differential and difference equations were implemented in \textsc{Maple} in a package called \textsc{ISOLDE} (see \cite{ISOLDE}). These algorithms are described in \cite{Barkatou:1999,Barkatou:2007,Barkatou/Pflugel:1998}. 

\subsubsection{Fitting's Lemma and Zero Divisors.}%
Let $R$ be any ring. A left  $R$-module $M$ is said to be \emph{indecomposable}  if  no decomposition $M = N\oplus L$ with $N, L$ nonzero submodules is possible. Otherwise, we say that $M$ is decomposable. The endomorphism ring $\lend{R}{M}$ of a decomposable module $M$ has always zero divisors: from a nontrivial decomposition $M = N \oplus L$ we get that the endomorphism $e : M \to M$ defined by $e(n + l) = n$, for $n \in N, l \in L$ is an \emph{idempotent} of the ring $\lend{R}{M}$ (i.e., $e^2 = e$) different from $0$ and $1$, and, hence, the obvious equality $e(1-e) = 0$ gives that $e$ is a nontrivial zero divisor.  

Fitting's  Lemma  (see \cite[Proposition 3.1.8]{Hazewinkel/alt:2004}) says that if $h : M \to M$ is an endomorphism of a left $R$-module of finite length, then $M = \ker h^n \oplus \Ima h^n$ for some positive integer $n$. As a consequence, if $M$ is indecomposable of finite length and $h \neq 0$, then either $h$ is an automorphism or $h^n = 0$ for some $n >0$ (and, hence, $h$ is a zero divisor of $\lend{R}{M}$).  We so far deduce

\begin{lemma}
If $M$ is a left module of finite length over a ring $R$, then $\lend{R}{M}$ is a skew field if and only if $\lend{R}{M}$ has no nontrivial zero divisors.
\end{lemma}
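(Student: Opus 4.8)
The plan is to prove the two implications separately; the forward direction is immediate and the reverse direction is where the work lies. First I would observe that if $\lend{R}{M}$ is a skew field, then it has no zero divisors at all (nontrivial or otherwise), since in a skew field every nonzero element is invertible and an invertible element cannot be a zero divisor; this settles one direction without invoking finite length.

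For the converse, assume $\lend{R}{M}$ has no nontrivial zero divisors. The key preliminary reduction is to note that $M$ must then be indecomposable: by the discussion preceding the lemma, a nontrivial decomposition $M = N \oplus L$ produces an idempotent $e \in \lend{R}{M}$ with $e \neq 0, 1$ satisfying $e(1-e) = 0$, hence a nontrivial zero divisor, contradiction. So $M$ is indecomposable of finite length. Now take any nonzero $h \in \lend{R}{M}$; I want to show $h$ is invertible in $\lend{R}{M}$. Apply Fitting's Lemma: $M = \ker h^n \oplus \Ima h^n$ for some $n > 0$. By indecomposability, one of the two summands is zero. If $\Ima h^n = 0$ then $h^n = 0$, so $h$ is nilpotent; but then $h \cdot h^{n-1} = 0$ with $h^{n-1} \neq 0$ (taking the least such $n$), making $h$ a nontrivial zero divisor — contradiction — unless $n = 1$, i.e. $h = 0$, also excluded. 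Hence $\ker h^n = 0$ and $\Ima h^n = M$, so $h^n$ is bijective, which forces $h$ itself to be bijective (injective since $h^n$ is; surjective since $h^n = h \circ h^{n-1}$ is). A bijective endomorphism of finite length is an isomorphism, hence a unit of $\lend{R}{M}$. Since every nonzero element of $\lend{R}{M}$ is a unit, $\lend{R}{M}$ is a skew field.

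The only slightly delicate point — the main obstacle, such as it is — is handling the nilpotent case cleanly: one should pick $n$ minimal with $h^n = 0$ to guarantee $h^{n-1} \neq 0$, and then note that either this produces a genuine nontrivial zero divisor (when $n \geq 2$) or collapses to $h = 0$ (when $n = 1$), both contradicting our standing assumptions. Everything else is a direct application of Fitting's Lemma together with the indecomposability forced by the zero-divisor hypothesis, and the elementary fact that an injective-and-surjective module homomorphism is an isomorphism. I would keep the exposition terse, essentially assembling the ingredients already laid out in the paragraphs immediately above the lemma statement.
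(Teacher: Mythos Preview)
Your proof is correct and follows essentially the same approach as the paper: the paper derives the lemma directly from the preceding discussion of Fitting's Lemma, observing that indecomposability (forced by the absence of idempotent zero divisors) together with Fitting's dichotomy makes every nonzero endomorphism an automorphism, since the nilpotent alternative would yield a zero divisor. You have simply unpacked the same argument with a bit more care about the nilpotent case and the passage from $h^n$ bijective to $h$ bijective.
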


The previous discussion also suggests what kind of non simple module of finite length $M$ could satisfy that $\lend{R}{M}$ is a skew field. The simplest situation is to assume $M$ indecomposable of length $2$, with a composition series with non isomorphic composition factors. The fact that $\lend{R}{M}$ is then a skew field is obvious after some training in abstract module theory. We include a detailed reasoning for the general reader: In fact, such a module contains a unique simple submodule $S_1$ (because $M$ is indecomposable of length $2$).  Moreover, $M/S_1$ is simple and  not  isomorphic to $S_1$. Now, if $h : M \to M$ is a nonzero endomorphism, then either $\ker h = 0$, and thus $h$ must be an automorphism, or $\ker h$ is a simple submodule of $M$ (because $M$ has length $2$). In the second case, $\ker h = S_1$ and, by Noether's first isomorphism theorem, $M/ \ker h$ is isomorphic to a simple submodule of $M$, which must be $S_1$. Thus the second option is excluded by the structure of the composition series of the module $ M$, and, hence, $h$ must be an automorphism. The following example illustrates the  situation just  described. 

\begin{example}\label{indescomponible}
Let $R = \mathbb{Q}(t)[ x ;d/dt]$ be the differential operator ring associated to the differential field $(\mathbb{Q}(t), d/dt)$, and consider the reducible polynomial 
\begin{equation}\label{factorizacionunica}
f =  x ^2 -t x  - 1 =  x ( x -t) \in R \,.
\end{equation} 
Let us see that \eqref{factorizacionunica} gives the unique factorization of $f$ with monic irreducible factors. By Remark \ref{compseriesfactor}, this will imply that the left $R$-module $R/Rf$ has a unique composition series with composition factors isomorphic to $R/R( x -t)$ and $R/R x $, respectively. 

If $ x ^2 -t x  - 1 = ( x -v)( x -u)$, with $v, u \in \mathbb{Q}(t)$, then we get, equating coefficients, that $u$ satisfies the Riccati equation $u' = -u^2 + tu +1$. Solving it, we see that the only rational solution is $u = t$. This implies the uniqueness of the factorization \eqref{factorizacionunica}. 

Finally, if $h : R/R x  \to R/R( x -1)$ is any homomorphism of left $R$-modules, then $h(1 + R x ) = q + R( x -1)$ for some $q \in \mathbb{Q}(t)$ such that $ x  q \in R( x -1)$. A straightforward argument shows that $q=0$ and, thus, $h = 0$. Therefore, the composition factors of the indecomposable module $R/Rf$ are not isomorphic. As a consequence, $f$ is not irreducible in $\mathbb{Q}(t)[ x ;d/dt]$ but  $\lend{R}{R/Rf}$ is a skew field.  
\end{example}

\subsubsection{Bounded Indecomposable Polynomials and Factorization.} Chapter 3 of \cite{Jacobson:1943} is concerned with the arithmetic and the structure of finitely generated modules over a left and right principal ideal domain. This is the case of $R = \field{D}[ x ;\sigma,\delta]$, whenever we  assume that $\sigma$ is an automorphism. A polynomial $f \in R$ is  called \emph{indecomposable} if $R/Rf$ is indecomposable as a left $R$-module, and $f$ is said to be \emph{bounded} if $\Ann_R(R/Rf) \neq \{0 \}$.  If $f$ is bounded, then  there exists a polynomial $0 \neq f^* \in R$ such that $Rf^* = f^*R$ is the largest two-sided ideal of $R$ contained in $Rf$. This polynomial $f^*$ is determined by $f$ up to multiplication (e.g. on the left) by a nonzero element of $\field{D}$, and it is called \emph{the bound} of $f$. By \cite[Theorem 11, Ch. 3]{Jacobson:1943}, the bound $Rf^* = f^*R$ is also the largest two sided ideal of $R$ contained in $fR$. 

It follows from the theory developed in paragraphs 8 and 9 of Chapter 3 of \cite{Jacobson:1943} (specially, theorems 20, 21 and 24) that if $f$ is bounded and $R/Rf$ is indecomposable as a left $R$-module, then $R/Rf$  has a unique composition series, and all its composition factors are isomorphic. 

\begin{lemma}
Let $R = \field{D}[ x ;\sigma,\delta]$, and assume that $\sigma$ is an automorphism of the skew field $\field{D}$. If $f \in R$ is a bounded polynomial of positive degree, then $f$ is irreducible if and only if $\lend{R}{R/Rf}$ is a skew field. 
\end{lemma}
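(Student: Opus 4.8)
I would prove the two implications separately; the forward one is essentially Schur's Lemma, and the converse carries the weight. For the forward implication, recall from the discussion preceding Example~\ref{indescomponible} that $f$ is irreducible if and only if $R/Rf$ is a simple left $R$-module; since the endomorphism ring of a simple module is a skew field (Schur's Lemma), $\lend{R}{R/Rf}$ is a skew field. Note that boundedness of $f$ is not used here.

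For the converse, assume $E := \lend{R}{R/Rf}$ is a skew field. I would first observe that $R/Rf$ is indecomposable: a skew field has no idempotents other than $0$ and $1$, while, as recalled in the text, a nontrivial direct-sum decomposition of $R/Rf$ would produce a nontrivial idempotent of $E$. Now comes the key input. Since $f$ is \emph{bounded} and $R/Rf$ is an indecomposable module of finite length (it is a finite-dimensional $\field{D}$-vector space), the structural result quoted just before the lemma --- from paragraphs 8 and 9 of Chapter~3 of \cite{Jacobson:1943} --- says that $R/Rf$ has a unique composition series $\{0\} = M_0 \subset M_1 \subset \cdots \subset M_t = R/Rf$, all of whose composition factors are isomorphic to a single simple module $S$. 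In particular $M_1 \cong S \cong M_t/M_{t-1} = (R/Rf)/M_{t-1}$.

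Now suppose, towards a contradiction, that $f$ is not irreducible, i.e. $R/Rf$ is not simple, so $t \geq 2$. I would construct a nonzero non-invertible element of $E$ explicitly: let $h : R/Rf \to R/Rf$ be the composite of the canonical projection $R/Rf \to (R/Rf)/M_{t-1}$, an isomorphism $(R/Rf)/M_{t-1} \xrightarrow{\sim} M_1$, and the inclusion $M_1 \hookrightarrow R/Rf$. Since $M_{t-1} \subsetneq R/Rf$ the map $h$ is nonzero, while $\Ima h = M_1 \subsetneq R/Rf$ because $t \geq 2$; hence $h$ is not surjective and therefore not invertible in $E$. (In fact, since $M_1 \subseteq M_{t-1}$, one gets $h^2 = 0$, so $h$ is a nonzero nilpotent endomorphism, matching the picture of Fitting's Lemma for indecomposable modules of finite length.) This contradicts $E$ being a skew field, so $t = 1$, $R/Rf$ is simple, and $f$ is irreducible.

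\textbf{Main obstacle.} The only non-routine ingredient is the uniserial structure of $R/Rf$ for bounded indecomposable $f$ --- quoting that result correctly from \cite{Jacobson:1943} is the real step, and the rest is elementary manipulation of composition series. Boundedness is essential and cannot be dropped: Example~\ref{indescomponible} exhibits a non-bounded indecomposable $R/Rf$ of length $2$ whose unique simple submodule and unique simple quotient are \emph{not} isomorphic, which is exactly what forces the composite $h$ above to vanish and leaves $\lend{R}{R/Rf}$ a skew field even though $f$ is reducible.
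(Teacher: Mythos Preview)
Your proposal is correct and follows essentially the same approach as the paper: both arguments reduce to the indecomposable case, invoke the uniserial structure of $R/Rf$ from \cite{Jacobson:1943} for bounded indecomposable $f$, and construct the same nilpotent endomorphism $h$ (projection onto the top factor followed by the isomorphism onto the socle $M_1$) satisfying $h^2=0$. The only cosmetic difference is organizational---the paper argues the contrapositive by a direct case split on decomposable versus indecomposable, whereas you first deduce indecomposability from the absence of idempotents in $E$---but the substance is identical.
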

\begin{proof}
Let us prove that if the length of $M = R/Rf$ is at least $2$, then $\lend{R}{M}$ has nontrivial zero divisors. We know from the previous paragraph that this is already the case when $M$ is decomposable, so we assume that $M$ is indecomposable, and let $\{ 0 \} = M_{0} \subset M_1 \subset \cdots \subset M_{n} = M$ be its unique composition series. Then $M/M_{n-1}$ is simple and isomorphic to $M_1$. Let $h : M \to M$ be the composition of the canonical projection $M \to M/M_{n-1}$ followed by $M/M_{n-1} \cong M_1 \subseteq M$. Certainly, $h \neq 0$, but $h^2 = 0$, which finishes the proof.
\end{proof}

\subsubsection{The Eigenring.}
Given $h \in \lend{R}{R/Rf}$,   where $R = \field{D}[ x ;\sigma,\delta]$, 
\begin{equation}\label{endo}
h(1 + Rf) = q + Rf
\end{equation}
 for some $q \in R$ such that $\deg{q} < \deg{f}$. Obviously, $q$ determines $h$, since $h(u + Rf) = uq+Rf$ for any $u + Rf \in R/Rf$. On the other hand, $q$ cannot be arbitrary. In fact, $q$ will define an endomorphism of $R$-modules via \eqref{endo} if and only if $fq \in Rf$ or, equivalently, $\lrem(fq,f) = 0$. In resume, \eqref{endo} provides a bijection between $\lend{R}{R/Rf}$ and 
 \begin{equation}\label{eigenring}
 \mathfrak{E}(f) = \{ q \in R : \deg{q} < \deg{f} \text{ and } \lrem(f q ,f) = 0 \}\,.
 \end{equation}
 A straightforward computation shows that this bijection is a homomorphism of additive groups and that the product of $\lend{R}{R/Rf}$ is transferred to the product $
 q' \cdot q = \lrem(q'q,f)$ in $\mathfrak{E}(f)$,
 which makes it a ring isomorphic to $\lend{R}{R/Rf}$ (recall that the product in this endomorphism ring is the opposite of the composition of maps).  Any (non trivial) zero divisor $q \in \mathfrak{E}(f)$ gives  the  proper factor $\rgcd(q,f)$ of $f$, for if $\rgcd(q,f) = 1$, then the endomorphism $h$ defined by \eqref{endo} is surjective and, since $R/Rf$ is of finite length, an isomorphism. 
 
 \begin{proposition}\label{eigenfactor}
 Given $ f \in R = \field{D}[ x ;\sigma,\delta]$, each nontrivial zero divisor $q$ of $f$ in $\mathfrak{E}(f)$ gives a proper factor $\rgcd(q,f)$ of $f$. Moreover, if $f$ is bounded,  and $\sigma$ is an automorphism,  then $f$ is irreducible if and only if $\mathfrak{E}(f)$ has no nontrivial zero divisors. 
 \end{proposition}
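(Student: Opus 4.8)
The plan is to prove the two assertions separately, relying on the ring isomorphism $\mathfrak{E}(f) \cong \lend{R}{R/Rf}$ established in \eqref{eigenring} and on the preceding lemma (which, under the hypotheses that $f$ is bounded and $\sigma$ is an automorphism, says $R/Rf$ is simple if and only if $\lend{R}{R/Rf}$ is a skew field). First I would dispose of the statement about zero divisors, which needs no extra hypotheses on $f$ or $\sigma$. Let $q \in \mathfrak{E}(f)$ be a nontrivial zero divisor, and let $h$ be the corresponding endomorphism of $M = R/Rf$ via \eqref{endo}. Write $d = \rgcd(q,f)$, so $Rq + Rf = Rd$. I claim $\Ima h = Rd/Rf$: indeed $h(u + Rf) = uq + Rf$, so the image of $h$ is $(Rq + Rf)/Rf = Rd/Rf$. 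If $d$ were a unit of $\field{D}$ (the case $\rgcd(q,f)=1$), then $h$ would be surjective; since $M$ has finite length (by the Left Division Algorithm, $\dim_\field{D} M = \deg f < \infty$), a surjective endomorphism of $M$ is automatically injective, hence an isomorphism, hence a unit of $\lend{R}{R/Rf}$ — contradicting that $q$ is a zero divisor. Therefore $d \notin \field{D}$; and $d$ is a right divisor of $f$, with $\deg d \le \deg q < \deg f$, so $\deg d < \deg f$, whence $d$ is a proper factor of $f$.

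For the second assertion, assume $f$ is bounded and $\sigma$ is an automorphism. If $f$ is irreducible, then $R/Rf$ is simple, so $\lend{R}{R/Rf} \cong \mathfrak{E}(f)$ is a skew field by Schur's Lemma, hence has no nontrivial zero divisors. Conversely, suppose $f$ is reducible, i.e., $R/Rf$ has length at least $2$. The key point is that the preceding lemma (whose proof is cited from \cite{Jacobson:1943}, using boundedness and that $\sigma$ is an automorphism) already establishes that in this situation $\lend{R}{R/Rf}$ has a nontrivial zero divisor: if $M = R/Rf$ is decomposable one uses a nontrivial idempotent, and if $M$ is indecomposable one uses its unique composition series $\{0\} = M_0 \subset \cdots \subset M_n = M$ with all factors isomorphic, forming the nilpotent endomorphism $M \twoheadrightarrow M/M_{n-1} \cong M_1 \hookrightarrow M$. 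Transporting this zero divisor across the isomorphism $\lend{R}{R/Rf} \cong \mathfrak{E}(f)$ yields a nontrivial zero divisor in $\mathfrak{E}(f)$. This proves the contrapositive, and the proof is complete.

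The main obstacle, such as it is, is entirely bundled into the previous lemma: the structural fact that a bounded indecomposable $f$ (with $\sigma$ an automorphism) has a \emph{unique} composition series with \emph{isomorphic} factors is the genuinely non-trivial input, and it is imported from Jacobson's monograph (theorems 20, 21, 24 of Chapter 3 of \cite{Jacobson:1943}) rather than reproved here. Granting that, the present proposition is essentially a dictionary translation: everything else is the bijection \eqref{eigenring}, Schur's Lemma, the finite-length argument that surjective endomorphisms are isomorphisms, and the elementary identification $\Ima h = Rd/Rf$ with $d = \rgcd(q,f)$. One minor point worth stating carefully is that the first assertion is genuinely hypothesis-free on the side of boundedness and $\sigma$ — only the "if and only if" in the second half requires those hypotheses — so I would phrase the proof to make that split explicit.
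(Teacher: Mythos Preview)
Your proposal is correct and follows essentially the same route as the paper. The paper does not give a separate proof environment for this proposition; the argument is assembled from the sentence immediately preceding it (if $\rgcd(q,f)=1$ then $h$ is surjective, hence an isomorphism by finite length, contradicting zero-divisorship) together with the preceding lemma on bounded $f$ and the isomorphism $\mathfrak{E}(f)\cong\lend{R}{R/Rf}$, exactly as you lay it out --- your explicit identification $\Ima h = Rd/Rf$ is a mild sharpening of the paper's phrasing but not a different idea.
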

 
Proposition \ref{eigenfactor} reduces the problem of finding proper factors, if possible, of a bounded polynomial $f \in R = \field{D}[ x ;\sigma,\delta]$, to the search of zero divisors of the eigenring $\mathfrak{E}(f)$.  Even if $f$ is not bounded, if the eigenring is presented as a finite-dimensional algebra over some subfield $K$, then the  idea  of finding zero divisors in $\mathfrak{E}(f)$ may be used to obtain factorizations of $f$ (even complete factorizations, e. g., if $R/Rf$ is semisimple as  a left $R$-module, that is, it is a sum of simple modules). This is the case of the method used in \cite{Singer:1996} for linear differential operators. 

The ``opposite'' situation, where all polynomials are bounded, embodies the factorization method based also in the search of zero divisors of $\mathfrak{E}(f)$ presented in \cite{Giesbrecht:1998} for $f \in \mathbb{F}[ x ; \sigma]$, where $\mathbb{F}$ a finite field. In this example, and in many others, there exists a subfield $\field{k}$ of the center of $\field{D}$ such that the eigenring $\mathfrak{E}(f)$ of every polynomial $f \in R = \field{D}[ x ;\sigma,\delta]$ is a finite dimensional algebra over $\field{k}$ (in the case of $\mathbb{F}[ x ;\sigma]$, $\field{k}$ may be chosen as the invariant subfield of $\mathbb{F}$ under $\sigma$). Under these circumstances, $\mathfrak{E}(f)$ may be embedded as a subalgebra of a full matrix algebra over $\field{k}$ and try to use Linear Algebra methods to find non zero divisors in $\mathfrak{E}(f)$, or to prove that they do no exist. For $f \in \mathbb{F}[ x ;\sigma]$, efficient  algorithms do exist \cite{Ronyai:1987} and \cite{Giesbrecht:1998}. 

Recently, some alternative methods of factorization have been investigated. For the specific case $R = \mathbb{F}[ x ;\sigma]$, there is the approach \cite{Caruso/LeBorgne} based on the use of non trivial properties of Azumaya algebras over finite fields. An alternative method, combining the use of elementary properties of the modules over the Artinian ring $R/Rf^*$ with the search of zero-divisors in eigenrings of semisimple modules, is proposed in \cite{Gomez/alt:unp}. The latter may be applied to more general cases of Ore extensions  that  are finitely generated as modules  over their centers. 

The aforementioned algorithms give \emph{one} factorization of a given polynomial $f \in R  = \field{D}[ x ;\sigma,\delta]$. In view of Example \ref{ejemplofactor}, the following question makes perfectly sense.

\begin{problem}
Given a factorization of $f \in R$ as a product of irreducible polynomial,  is it  possible to compute (or describe as explicitly as possible) all the other factorizations of $f$?
\end{problem}

\subsection{Matrices and Structure of Modules}
Let $ \field{D}$ be a skew field, $\sigma :  \field{D}  \to  \field{D} $ a ring endomorphism, and $\delta :  \field{D}  \to  \field{D}$ a $\sigma$-derivation, and consider $R = \field{D}[x;\sigma,\delta]$ the associated Ore extension of $ \field{D}$. As we have seen, a  Left Division Algorithm  is available, and this implies that $R$ is a left principal ideal domain (left  PID,   for short). This division algorithm may be used for the computation of the matrix $\mathsf{syz}(A_{\psi})$ associated to a homomorphism of left  $R$- modules $\psi : \mathbf{F}_t \to \mathbf{F}_m$  from $A_{\psi} = (a_{ij}) \in R^{t \times m}$.  Recall that $A_{\psi}$ denotes the matrix representing $\psi$ in coordinates with respect to some fixed bases in the free left $R$-modules $\mathbf{F}_t$ and $\mathbf{F}_m$).  As in the commutative case, the key is the existence of an equivalence between elementary operations on the rows of a matrix and left multiplication by elementary matrices. 

To this end, given $i, j \in \{ 1, \dots, t \}$, let $E_{ij}$  denote  the matrix obtained from the identity matrix $I_{ t} \in R^{t \times t}$ by interchanging the  $i$-th and $j$-th rows.   We know that $E_{ij}A_{\psi}$ is the resulting matrix of interchanging the rows $i$-th and $j$-th in $A_{\psi}$. Analogously, given $q \in R$, let $E_{i+qj}$ denote the matrix obtained from $ I_t$ by adding to the $i$-th row the result of multiplying the $j$-th row by $q$ on the left. The matrix $E_{i + qj}A_{\psi}$ is obtained from $A_{\psi}$ in the same way. It is obvious that $E_{ij}^2 =  I_t  = E_{i + qj}E_{i - qj}$, henceforth these matrices, called \emph{row elementary matrices}, are invertible.  

Now, it is clear that, due to the  Left Division Algorithm,   we can make a finite sequence of elementary row operations on $A_{\psi}$ to obtain a matrix $ A_{\hat{\psi}}  \in R^{t \times m}$ of the form
\begin{equation}\label{smith}
A_{\hat{\psi}} = \begin{pmatrix} b_{11} & b_{21} & \cdots & b_{1r} & \cdots & b_{1m} \\ 
0 & b_{22}& \cdots & b_{2r} & \cdots &  b_{2m} \\
 \vdots & & & \vdots & & \vdots \\
 0 & 0 & \cdots & b_{rr} & \cdots & b_{rm} \\
 0 & 0 & \cdots & 0& \cdots & 0 \\
  \vdots & \vdots & & \vdots & & \vdots \\ 
  0 & 0 & \cdots & 0& \cdots & 0  \end{pmatrix},
\end{equation}
where $r \leqslant t, m$, and the first $r$ rows of $B$ are nonzero, but $b_{ji} = 0$ if $j > i$. If $\hat{\psi} : \mathbf{F}_t \to \mathbf{F}_m$ is defined by $A_{\hat{\psi}}$, then $\ker \hat{\psi}$ is generated by the last $t-r$ vectors of the basis of $\mathbf{F}_t$, that is, the matrix $ \mathsf{syz}(A_{\hat{\psi}})$ in \eqref{syzcomp} is 
\[
 \mathsf{syz}(A_{\hat{\psi}}) = \begin{pmatrix} 0_{ (t-r) \times r} & \vline & I_{t-r}  \end{pmatrix}.
\]
Let $P \in R^{t \times t}$ be the invertible matrix obtained as a product of elementary matrices such that $ A_{\hat{\psi}}  = PA_{\psi}$, and $\hat{P}$ its inverse. If we define $p : \mathbf{F}_t \to \mathbf{F}_t$ and $\hat{p} : \mathbf{F}_t \to \mathbf{F}_t$ by the conditions $A_p = P$, $A_{\hat{p}} = \hat{P}$, then we  may  compute the matrix $  \mathsf{syz}(A_{\psi}) \in R^{(2t-r) \times t}$ according to \eqref{syzcomp}  resulting that its first $t-r$ rows are the last $t-r$ rows of $P$, and its last $t$ rows are zero.  In resume, the kernel of $\psi$ is generated by last  $t-r$ rows of $P$. Since $P$ is computed from $I_t$ by the same sequence of row elementary operations  used to get $A_{\hat{\psi}}$  from $A_{\psi}$, we  may  resume the information so far obtained in the following theorem.

\begin{theorem}\label{leftPID}
Given $\psi : \mathbf{F}_t \to \mathbf{F}_m$, a homomorphism of free left modules over $R = \field{D}[ x ;\sigma,\delta]$,  there is an algorithm that computes $P \in \GL_t(R)$ such that 
\[
P A_{\psi}  = A_{\hat{\psi}}\,,
\]
where $A_{\hat{\psi}}$ is of the form displayed in \eqref{smith}. Moreover, a basis for the kernel of $\psi$ is given by the last $t-r$ rows of $P$, while a basis of the image of $\psi$ is obtained from the first $r$ rows of $A_{\hat{\psi}}$. 
\end{theorem}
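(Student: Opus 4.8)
The plan is to make precise the informal description given just before the statement, organizing it around the correspondence between elementary row operations and left-multiplication by invertible matrices, together with the syzygy-composition formula \eqref{syzcomp}. First I would fix bases in $\mathbf{F}_t$ and $\mathbf{F}_m$ so that $\psi$ is given by $A_\psi \in R^{t \times m}$, and describe the row-reduction procedure explicitly: scanning columns from left to right, at each stage use the Left Euclidean Division Algorithm in $R = \field{D}[x;\sigma,\delta]$ to replace, among the rows not yet ``used'', the entry of highest degree in the current column by successive remainders, eventually clearing all but one entry in that column; then move that row to the top of the not-yet-used block and pass to the next column. Each such step is realized by left multiplication by a product of the elementary matrices $E_{ij}$ and $E_{i+qj}$, which are invertible by the identities $E_{ij}^2 = I_t = E_{i+qj}E_{i-qj}$. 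Since the degree of the distinguished entry strictly drops at each substep (or the entry becomes zero), the process terminates, yielding $P \in \GL_t(R)$, a product of such elementary matrices, with $P A_\psi = A_{\hat\psi}$ in the staircase form \eqref{smith}, the first $r$ rows nonzero and $b_{ji} = 0$ for $j > i$.

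Next I would identify $\ker\hat\psi$, where $\hat\psi$ is the homomorphism with matrix $A_{\hat\psi}$. The last $t - r$ rows of $A_{\hat\psi}$ are zero, so $\gordo{e}_{r+1}, \dots, \gordo{e}_t$ lie in $\ker\hat\psi$; conversely, if $(h_1, \dots, h_t) A_{\hat\psi} = 0$, then reading the columns that contain the pivots $b_{11}, b_{22}, \dots, b_{rr}$ in turn, and using that $R$ is a domain together with $\deg(fg) = \deg f + \deg g$, one forces $h_1 = \dots = h_r = 0$. Hence $\Syz(A_{\hat\psi})$ is freely generated by $\gordo{e}_{r+1}, \dots, \gordo{e}_t$, i.e.\ $\mathsf{syz}(A_{\hat\psi}) = \begin{pmatrix} 0_{(t-r)\times r} & I_{t-r} \end{pmatrix}$. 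Likewise the first $r$ rows of $A_{\hat\psi}$ span $\Ima\hat\psi = \Ima\psi$ (since $P$ is invertible, $\psi$ and $\hat\psi$ have the same image), and they are $R$-linearly independent by the same triangularity-and-degree argument, so they form a basis of $\Ima\psi$.

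Finally I would invoke the syzygy-composition formula. Setting $\hat p$ and $p$ to be the endomorphisms of $\mathbf{F}_t$ with $A_{\hat p} = \hat P := P^{-1}$ and $A_p = P$, one has $A_\psi = A_{\hat p} A_{\hat\psi}$ and $A_{\hat\psi} = A_p A_\psi$ (here the ``intermediate'' module is $\mathbf{F}_t$ itself, $\hat t = t$), so \eqref{syzcomp} gives
\[
\mathsf{syz}(A_\psi) = \begin{pmatrix} \mathsf{syz}(A_{\hat\psi}) A_p \\ I_t - A_{\hat p} A_p \end{pmatrix} = \begin{pmatrix} \begin{pmatrix} 0_{(t-r)\times r} & I_{t-r} \end{pmatrix} P \\ 0_{t \times t} \end{pmatrix}.
\]
The top block $\begin{pmatrix} 0 & I_{t-r} \end{pmatrix} P$ is exactly the last $t - r$ rows of $P$, and the bottom block vanishes because $\hat P P = I_t$. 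Thus $\ker\psi$ is spanned by the last $t - r$ rows of $P$; these are $R$-linearly independent because, as just shown, $\gordo{e}_{r+1}, \dots, \gordo{e}_t$ are independent in $\ker\hat\psi$ and $p = P$ is an isomorphism carrying them to these rows. Since $P$ is obtained from $I_t$ by performing the very same sequence of elementary row operations used to pass from $A_\psi$ to $A_{\hat\psi}$, the whole construction is algorithmic, which is the content of the theorem.

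I expect the main obstacle to be the bookkeeping in the first paragraph: giving a clean, provably terminating description of the row-reduction that produces the exact staircase shape \eqref{smith} (as opposed to a merely upper-triangular shape), and being careful that all operations are \emph{left} operations compatible with the convention that $\psi(\gordo{u}) = \gordo{x} A_\psi$ acts on row vectors on the right — so that composition corresponds to $A_{\phi\psi} = A_\psi A_\phi$ and the elementary matrices multiply on the left of $A_\psi$. The termination is where the hypotheses on $R$ (skew field coefficients, $\sigma$ injective, hence a genuine degree function with $\deg(fg) = \deg f + \deg g$ and a Left Euclidean Algorithm) are essential, and I would make sure the induction is on a well-founded quantity such as the multiset of degrees of the entries in the active column.
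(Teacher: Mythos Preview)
Your proposal is correct and follows essentially the same route as the paper: row-reduce $A_\psi$ to the echelon form \eqref{smith} via left elementary operations built from the Left Euclidean Division, read off $\mathsf{syz}(A_{\hat\psi}) = \begin{pmatrix} 0_{(t-r)\times r} \mid I_{t-r} \end{pmatrix}$, and then apply \eqref{syzcomp} with $A_p = P$, $A_{\hat p} = P^{-1}$ so that the bottom block vanishes and the top block is the last $t-r$ rows of $P$. Your write-up is in fact somewhat more careful than the paper's, since you explicitly verify the linear independence of the first $r$ rows of $A_{\hat\psi}$ (for the image basis) and of the last $t-r$ rows of $P$ (for the kernel basis), points the paper leaves implicit.
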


Observe that Theorem \ref{leftPID} says in particular that both the kernel and the image of the homomorphism $\psi : \mathbf{F}_t \to \mathbf{F}_m$ are finitely generated free left $R$-modules. Le us record this relevant fact.

\begin{corollary}
Let $R =  \field{D}[ x ;\sigma,\delta]$ be an Ore extensions of a skew field $ \field{D}$. Every left $R$-submodule of a finitely generated free left $R$-module is (finitely generated) free. 
\end{corollary}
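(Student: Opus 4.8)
The plan is to deduce the corollary directly from Theorem~\ref{leftPID}, reducing the statement about an arbitrary submodule to the case of a submodule that happens to be the image of a morphism between free modules of finite rank. First I would fix a finitely generated free left $R$-module $\mathbf{F}_m$ and an arbitrary left $R$-submodule $N \subseteq \mathbf{F}_m$. The key preliminary point is that $N$ is itself finitely generated: since $R = \field{D}[x;\sigma,\delta]$ is a left PID, it is in particular left noetherian (every left ideal is finitely generated, hence by the standard characterization recalled in the excerpt every finitely generated left $R$-module, and in particular every submodule of $\mathbf{F}_m$, is finitely generated). So pick generators $n_1,\dots,n_t$ of $N$ and let $\psi : \mathbf{F}_t \to \mathbf{F}_m$ be the morphism sending the $i$-th basis vector to $n_i$; then $N = \Ima \psi$.

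Next I would invoke Theorem~\ref{leftPID} applied to this $\psi$: there is an invertible $P \in \GL_t(R)$ with $P A_\psi = A_{\hat\psi}$ in the upper-triangular-staircase form \eqref{smith}, and the theorem asserts that a basis of $\Ima \psi$ is given by the first $r$ nonzero rows of $A_{\hat\psi}$. Concretely, left-multiplying $A_\psi$ by the invertible matrix $P$ corresponds to performing an invertible change of generators of $N$ (the rows of $P A_\psi$ still generate the row space of $A_\psi$, i.e. $N$), and the triangular shape guarantees these $r$ rows are $R$-linearly independent: a nontrivial left $R$-linear relation among them, read off in the first column where a nonzero ``pivot'' $b_{ii}$ occurs, would force a coefficient times $b_{ii}$ to vanish, contradicting that $R$ is a domain and $b_{ii} \neq 0$ — one peels off the relation column by column. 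Hence $N$ is free of rank $r \leq t$.

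The only genuine subtlety is making the linear-independence argument for the rows of \eqref{smith} precise, and ensuring the reduction to free modules of \emph{finite} rank is legitimate — that is why the noetherian property of $R$ (itself a consequence of the Left Division Algorithm via the left PID property) is essential; without finite generation of $N$ one could not even form the matrix $A_\psi$. Everything else is a transcription of Theorem~\ref{leftPID}. I would close by remarking that this gives the analogue, for Ore extensions of skew fields, of the classical fact that submodules of free modules over a (commutative) PID are free, and that the rank of the submodule does not exceed that of the ambient module.
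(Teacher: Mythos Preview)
Your argument is correct and follows essentially the same route as the paper: reduce to Theorem~\ref{leftPID} and read off that the image of $\psi$ has a basis. The paper's one-line proof simply says to apply Theorem~\ref{leftPID} to ``the inclusion map from $L$ to $\mathbf{F}_m$'', which is terse to the point of imprecision (the theorem requires the domain to be a free module $\mathbf{F}_t$), whereas you correctly insert the missing step---using that $R$ is left noetherian to get finite generation of $N$, then building $\psi:\mathbf{F}_t\to\mathbf{F}_m$ with image $N$. Your explicit verification of linear independence of the first $r$ rows of $A_{\hat\psi}$ is also fine, though strictly speaking Theorem~\ref{leftPID} already asserts they form a \emph{basis} of $\Ima\psi$, so you need not re-argue it.
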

\begin{proof}
Given a submodule $L$ of the free left $R$-module $\mathbf{F}_m$, it suffices to apply Theorem \ref{leftPID} with $\psi$ equal to the inclusion map from $L$ to $\mathbf{F}_m$. 
\end{proof} 

\begin{example}\label{ann1}
Let $f \in R  = \field{D}[ x ;\sigma,\delta]$ be an Ore polynomial, and $h : R \to R/Rf$ a homomorphism of left $R$-modules. Then $h(1) = q + Rf$ for some $q \in R$. Let us show how to apply Theorem \ref{leftPID} to this particular case for the computation of a presentation of $\Ima h$. Consider the sequence of remainders $r_{-1} = q$, $r_0 := -f$, $r_{i+1} =\lrem(r_{i-1},r_i)$, for $i > 0$, such that $r_n \neq 0$ and $r_{n+1} = 0$ for some $n \geq 0$. Write, for $i = 1, \dots, n$,  $r_{i-1} = c_ir_i + r_{i+1}$, where $c_i \in R$ is the quotient of the Left Euclidean Division of $r_{i-1}$ by $r_i$. We have 
\[
\begin{pmatrix} 0 & 1 \\
1 & - c_n \end{pmatrix} \begin{pmatrix} 0 & 1 \\
1 & - c_{n-1} \end{pmatrix} \cdots \begin{pmatrix} 0 & 1 \\
1 & - c_0 \end{pmatrix}\begin{pmatrix} q \\ -f \end{pmatrix} = \begin{pmatrix} r_n \\ 0 \end{pmatrix} .
\]
Thus, the $1 \times 1$-matrix given by Theorem \ref{leftPID} is the entry $c$ at position $(2,1)$ of the matrix
\[
P = \begin{pmatrix} 0 & 1 \\
1 & - c_n \end{pmatrix} \begin{pmatrix} 0 & 1 \\
1 & - c_{n-1} \end{pmatrix} \cdots \begin{pmatrix} 0 & 1 \\
1 & - c_0 \end{pmatrix} .\]

We get, therefore, the presentation $\Ima h \cong R/Rc$. Observe that we have $Rc = \ker (h) = \ann_R(q + Rf)$. 
\end{example}

\begin{example}
The idea developed in Example \ref{ann1} may be used for computing annihilators of elements in a general finitely presented left $R$-module. Consider a left $R$-module $M$ with a presentation $\xymatrix{\textbf{F}_{s} \ar^{\psi}[r] & \textbf{F}_t \ar^{\varphi}[r] & M \ar[r] & 0}$. Given $m \in M$, consider the morphism of left $R$-modules $\rho_m : R \to M$ defined as $\rho_m(r) = rm$, whose kernel is $\ann_R(m)$. To compute this annihilator, we write $m = \varphi(q)$, for some $q \in \mathbf{F}_t$. This vector $q$ also defines a morphism $q : R \to \mathbf{F}_t$ which encodes $\rho_m$.  
\end{example}

\subsection{The Structure of a Finitely Generated Module}
The structure of finitely generated modules over a (left and right) PID is given in \cite{Jacobson:1943}. In fact, Jacobson's approach is constructive, as it is based on the reduction of matrices to a diagonal form by means of elementary row and column operations. In this section, we assume that, in the Ore extension $\field{D}[ x ;\sigma,\delta]$, $\sigma$ is an automorphism of $\field{D}$. Therefore, $R$ has also a  Right  Division Algorithm (see Algorithm \ref{rightdivalg}). 

\begin{algorithm}
\caption{Right Euclidean Division}\label{rightdivalg}
\begin{algorithmic}
\REQUIRE $f, g \in \field{D}[ x ;\sigma,\delta]$ with $g \neq 0$.
\ENSURE $q, r \in \field{D}[ x ;\sigma,\delta]$ such that $f = gq + r$ and $\deg (r) < \deg (g)$. 
\INIT q:= 0, r:= f
\WHILE{$\deg (g) \leqslant \deg (r)$} 
\STATE with $a = \sigma^{-\deg(g)}(\lc{g}^{-1}\lc{r})$,
\STATE $q:= q + a x ^{\deg (r) - \deg (g)}$, $r:=r - g a  x ^{\deg (r) - \deg (g)}$ 
\ENDWHILE
\end{algorithmic}
\end{algorithm}

 In general, if $\sigma$ is an automorphism, then $A[x;\sigma,\delta]^{op} \cong A^{op}[x;\sigma^{-1},-\delta\sigma^{-1}]$, as rings (here $A$ is any ring). Thus, the Right Euclidean Algorithm on $ \field{D}[x;\sigma,\delta]$ is deduced from the Left Euclidean Algorithm of $\field{D}^{op}[x;\sigma^{-1},-\delta\sigma^{-1}]$, and vice-versa. This idea may be also used to change of side when dealing with modules.

\subsubsection{Diagonalization.}
Besides the elementary operations on the rows of a matrix with coefficients in $R$, we may use elementary operations on the columns. Thus,  if  $A \in R^{t \times s}$, then $AE_{ij}$ is the matrix obtained by interchanging the $i$-th and the $j$-th columns of $A$. Now, let $E_{i+jq}$ denote the matrix obtained from $ I_m$ by adding to the $i$-th column the result of multiplying the $j$-th column by $q$ on the right. The matrix $AE_{i + jq}$ is obtained from $A$ in the same way. If $R = \field{D}[ x ;\sigma,\delta]$ possesses also a  Right Division Algorithm  (i.e., $\sigma$ is an automorphism of $\field{D}$), then we may apply to any matrix $A \in R^{t \times m}$  suitable sequences of elementary row operations and elementary column operations in order to compute invertible matrices $P \in R^{t\times t}$ and $Q \in R^{m \times m}$ such that 
\[
PAQ = \begin{pmatrix} b_{1} & 0 \\
0 & A' \end{pmatrix}
\]
for some matrix $A' \in R^{(t-1) \times (m-1)}$. We obviously may obtain this diagonal form with $\deg b_1$ less or equal than the degrees of all nonzero entries of $A$. By induction, we obtain:

\begin{proposition}\label{diagonal}
Let $A \in R^{t \times m}$ be any matrix with coefficients in $R = \field{D}[ x ;\sigma,\delta]$. Assume that $\sigma$ is an automorphism. There exists an algorithm that computes $P \in \GL_t(R)$ and $Q \in \GL_m(R)$ such that 
\begin{equation}\label{eq:diagonal}
PAQ = \Delta : = \diag \{ b_{1}, \dots, b_{r}, 0, \dots, 0 \},
\end{equation}
that is, a diagonal matrix of size $t \times m$. 
\end{proposition}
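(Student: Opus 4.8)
The plan is to reduce the statement to a single elementary step: when $A\neq 0$, I will produce $P_1\in\GL_t(R)$ and $Q_1\in\GL_m(R)$, both products of row and column elementary matrices, such that
\[
P_1AQ_1=\begin{pmatrix} b_1 & 0\\ 0 & A'\end{pmatrix},\qquad A'\in R^{(t-1)\times(m-1)},
\]
with $\deg b_1$ not exceeding the degree of any nonzero entry of $A$; the Proposition then follows by induction on $\min\{t,m\}$. The whole argument uses only the Left Euclidean Division (deduced from Proposition~\ref{Pseudodivision}) and the Right Euclidean Division (Algorithm~\ref{rightdivalg}), the latter being exactly where the hypothesis that $\sigma$ is an automorphism is used.

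For the single step, pick a nonzero entry $b$ of $A$ of least degree and, by transposing a pair of rows and a pair of columns (multiplication by matrices $E_{ij}$, invertible since $E_{ij}^2=I$), move it to position $(1,1)$. Then I would run the following loop, writing $a_{11}$ for the current, always nonzero, $(1,1)$-entry. First, for $k=2,\dots,m$ use the Right Euclidean Division to write $a_{1k}=a_{11}q_k+r_k$ with $\deg r_k<\deg a_{11}$, and replace the $k$-th column by (column $k$) $-$ (column $1$)$q_k$; this is a right multiplication by a column elementary matrix and it turns the $(1,k)$-entry into $r_k$. If some $r_k\neq 0$, transpose columns $1$ and $k$ to bring it into position $(1,1)$ and restart the loop. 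Otherwise the first row is now $(a_{11},0,\dots,0)$; then, for $k=2,\dots,t$ use the Left Euclidean Division to write $a_{k1}=q_ka_{11}+r_k$ with $\deg r_k<\deg a_{11}$ and replace the $k$-th row by (row $k$) $-$ $q_k$(row $1$), a left multiplication by a row elementary matrix turning the $(k,1)$-entry into $r_k$. Since the first row has zeros in columns $2,\dots,m$, these row operations do not alter any entry outside the first column, so the first row stays $(a_{11},0,\dots,0)$. If some $r_k\neq 0$, transpose rows $1$ and $k$ and restart the loop; otherwise the first row and first column are now zero except for the $(1,1)$-entry, which I take as $b_1$.

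The loop terminates because each restart strictly decreases the non-negative integer $\deg a_{11}$, while $\deg a_{11}$ never increases during the process; hence $\deg b_1\le\deg b$, the minimum of the degrees of the nonzero entries of $A$. Collecting the elementary matrices used on the left into $P_1$ and on the right into $Q_1$ yields the displayed block form. For the induction, the case $\min\{t,m\}=1$ is immediate, the block matrix being already diagonal; if $\min\{t,m\}>1$, apply the induction hypothesis to $A'$ to get $P'\in\GL_{t-1}(R)$, $Q'\in\GL_{m-1}(R)$ with $P'A'Q'=\diag\{b_2,\dots,b_r,0,\dots,0\}$, and then
\[
\begin{pmatrix}1&0\\0&P'\end{pmatrix}P_1\,A\,Q_1\begin{pmatrix}1&0\\0&Q'\end{pmatrix}=\diag\{b_1,b_2,\dots,b_r,0,\dots,0\},
\]
so one takes $P=\begin{pmatrix}1&0\\0&P'\end{pmatrix}P_1\in\GL_t(R)$ and $Q=Q_1\begin{pmatrix}1&0\\0&Q'\end{pmatrix}\in\GL_m(R)$. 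All ingredients (the two Euclidean divisions, multiplying out elementary matrices, the recursion) are effective, so the construction is algorithmic. I expect the only delicate point to be the termination of the alternation between row clearing and column clearing: it rests on the observation that clearing the first column leaves the already-cleared part of the first row untouched, together with the strict decrease of the pivot degree at each restart; everything else is routine bookkeeping.
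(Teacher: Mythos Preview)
Your proof is correct and follows essentially the same approach as the paper, which merely sketches that elementary row and column operations (using both the Left and Right Euclidean Divisions) reduce $A$ to the block form $\begin{pmatrix} b_1 & 0\\ 0 & A'\end{pmatrix}$ with $\deg b_1$ minimal, and then inducts. You supply the details the paper omits, notably the alternating row/column clearing and the termination argument via the strict decrease of $\deg a_{11}$ at each restart; the only trivial omission is the case $A=0$, handled by $P=I_t$, $Q=I_m$.
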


Assume now that the matrix $A$ represents a left $R$-module $M$, that is, $A = A_{\psi}$ for a presentation  $\xymatrix{ R ^t \ar^{\psi}[r] &   R^m \ar^{\varphi}[r] & M  \ar[r] & 0}$. Let $P, Q$ invertible matrices such that $PA_{\psi}Q = \Delta$, where $\Delta = \diag \{b_{1}, \dots, b_{r}, 0, \dots, 0 \}$. Take morphisms in the following diagram such that $A_{p} =  P,   A_{q} = Q^{-1}, A_{\delta} = \Delta$, and $h : N \to M$  is defined by $q$ and $p$.  
\[
\xymatrix{ R^t \ar^{\delta}[r] \ar^{p}[d] &   R^m \ar^{\varphi'}[r] \ar^{q}[d] & N  \ar^{h}[d] \ar[r] & 0 \\
 R^t \ar^{\psi}[r] &   R^m \ar^{\varphi}[r] & M  \ar[r] & 0 \rlap{\,.}}
\]
 Since $p$ and $q$ are isomorphisms, we get that $h$  is an isomorphism.  Thus, $M \cong N \cong R^{m}/row(\Delta) \cong R^{m-r} \oplus R/Rb_{1} \oplus \cdots \oplus R/Rb_{r}$. 

\begin{theorem}\cite{Jacobson:1943}\label{torsiontorsionfree}
Every finitely generated left module $M$ over $R = \field{D}[ x ;\sigma,\delta]$ is a direct sum of finitely many cyclic modules. More precisely, $M$ is a direct sum of a free left $R$-module of finite rank and finitely many cyclic left $R$-modules of finite length. 
\end{theorem}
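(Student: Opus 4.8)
The plan is to leverage Proposition \ref{diagonal} together with the module-theoretic translation already sketched immediately before the statement. Concretely, given a finitely generated left $R$-module $M$, I would first invoke left noetherianity of $R$ (which follows from $R$ being a left PID, itself a consequence of the Left Division Algorithm) to obtain a finite presentation $\xymatrix{R^t \ar^{\psi}[r] & R^m \ar^{\varphi}[r] & M \ar[r] & 0}$, so that $M \cong R^m/\mathrm{row}(A_{\psi})$ for some matrix $A_{\psi} \in R^{t\times m}$. Then I would apply Proposition \ref{diagonal} to $A_{\psi}$ to produce invertible matrices $P\in\GL_t(R)$, $Q\in\GL_m(R)$ with $PA_{\psi}Q = \Delta = \diag\{b_1,\dots,b_r,0,\dots,0\}$, exactly as in the diagram preceding the statement, and conclude via the commutative diagram of presentations that $M\cong N$, where $N = R^m/\mathrm{row}(\Delta)$.

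The second step is to read off the structure of $R^m/\mathrm{row}(\Delta)$. Since $\Delta$ is diagonal of size $t\times m$ with nonzero entries $b_1,\dots,b_r$ (where $r\le t,m$) and the remaining rows and columns zero, the row module $\mathrm{row}(\Delta)$ is $Rb_1\gordo{e}_1 \oplus \cdots \oplus Rb_r\gordo{e}_r \subseteq R\gordo{e}_1\oplus\cdots\oplus R\gordo{e}_m$. Hence the quotient splits as a direct sum
\[
M \;\cong\; R^m/\mathrm{row}(\Delta) \;\cong\; R/Rb_1 \oplus \cdots \oplus R/Rb_r \oplus R^{m-r},
\]
which already exhibits $M$ as a direct sum of finitely many cyclic left $R$-modules (the free part $R^{m-r}$ being a sum of $m-r$ copies of the cyclic module $R$).

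For the more precise statement, I would separate the cyclic summands $R/Rb_i$ into those with $b_i\in\field{D}\setminus\{0\}$ (for which $R/Rb_i = 0$ and can be discarded) and those with $\deg b_i \ge 1$. For the latter, the Left Division Algorithm shows that $R/Rb_i$ is a left $\field{D}$-vector space of dimension $\deg b_i$, hence a left $R$-module of finite length (as already used in the proof of Theorem \ref{JHfactorization}). Collecting these, together with the free summand $R^{m-r}$, gives the decomposition of $M$ as a free left $R$-module of finite rank plus finitely many cyclic left $R$-modules of finite length, which is the assertion.

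I do not expect a serious obstacle here, since all the heavy lifting — the existence of the two-sided Smith-type diagonalization — is contained in Proposition \ref{diagonal}, whose proof in turn rests on the availability of both left and right Euclidean division when $\sigma$ is an automorphism. The only point requiring a little care is the passage from the equality $PA_{\psi}Q = \Delta$ of matrices to the isomorphism $M\cong N$ of modules: one must check that the maps $p$, $q$ in the displayed diagram (with $A_p = P$, $A_q = Q^{-1}$) are indeed isomorphisms of free modules (immediate, since $P$ and $Q^{-1}$ are invertible over $R$) and that the induced map $h\colon N\to M$ is well defined and bijective, which follows from the standard diagram-chase for morphisms of finitely presented modules recorded in Section \ref{sec:modules}. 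This is routine bookkeeping rather than a genuine difficulty.
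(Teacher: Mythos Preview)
Your proposal is correct and follows essentially the same approach as the paper: diagonalize the presentation matrix via Proposition \ref{diagonal}, transport the isomorphism through the commutative diagram of presentations (with $A_p=P$, $A_q=Q^{-1}$), and read off $M\cong R^{m-r}\oplus R/Rb_1\oplus\cdots\oplus R/Rb_r$. You even supply a detail the paper leaves implicit, namely the separation of the $b_i$ into units (giving zero summands) and polynomials of positive degree (giving cyclic modules of finite length via the $\field{D}$-dimension argument from the proof of Theorem \ref{JHfactorization}).
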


\subsubsection{Krull-Schmidt Theorem and Elementary Divisors.}
We deduce from Theorem \ref{torsiontorsionfree} that any finitely generated indecomposable left module of finite length over $R = \field{D}[ x ;\sigma,\delta]$ with $\sigma$ bijective, is isomorphic to $R/Rf$ for some  nonzero  $f \in R$, which is said to be \emph{indecomposable}. This, in conjunction with \emph{Krull-Schmidt Theorem}, lead to a complete classification of the finitely generated left $R$-modules of finite length. Recall (from \cite{Anderson/Fuller:1992}, for instance) that, for any ring $R$, if $M$ is a left $R$-module of finite length, and $M \cong  X_1 \oplus \cdots \oplus  X_s \cong Y_1 \oplus \cdots \oplus Y_t$ are two decompositions of $M$ as a direct sum of indecomposable modules, then $s = t$ and, after an eventual reordering, $X_i \cong Y_i$ for $i = 1, \dots, s$. As a consequence, $M$ is determined,  up to isomorphism , by finitely many indecomposable left $R$-modules of finite length. The classification of these indecomposable modules is one of the central problems in the Representation Theory of $R$. 

When applied to $R = \field{D}[ x ;\sigma,\delta]$,  with $\sigma$ bijective,   we get that a finitely generated left $R$-module of finite length $M$ is determined, up to isomorphisms, by a finite sequence $\{ e_1, \dots, e_s \}$ of indecomposable polynomials such that $M \cong R/Re_1 \oplus \cdots \oplus R/Re_s$. The polynomials $e_1, \dots, e_s$ are unique up to similarity and eventual reordering, and  they  are called the \emph{elementary divisors of $M$}.  

The computation of the elementary divisors of $M$ could be done from a presentation of $M$, by computing a diagonal form $\Delta$ as in \eqref{eq:diagonal}, and making the decomposition as a sum of indecomposable modules of each $R/Rb_{i}$. This last amounts to the computation of a decomposition of  $b_{i}$ as a  left  least common multiple of coprime indecomposable polynomials. Such a decomposition is described in the ``Third decomposition Theorem'' of \cite{Ore:1933}, and an algorithm for its computation in the case $R = \mathbb{F}[ x ;\sigma]$ was presented in \cite{Giesbrecht:1998}. 

As we have already seen, the elementary divisors of $M$ are an invariant, up to similarity, of the isomorphism class of $M$. Thus, in order to decide whether two given finitely generated left $R$-modules of finite length are isomorphic, we should compute their elementary divisors and compare if they give lists of similar polynomials. Even if both lists have been computed, which will depend greatly from the kind of Ore extension we are dealing with, a second problem arises: given indecomposable polynomials $f, g \in R$ of the same degree, how to decide if they are similar? When $f, g$ are indecomposable and bounded, then, by \cite[Theorem 20, Ch. 3]{Jacobson:1943}, $f$ and $g$ are similar if and only if  they have the same bound. General procedures for computing the bound of a polynomial in the case that $R = \field{D}[ x ;\sigma,\delta]$ is finitely generated as a module over its center are described in \cite{Gomez/alt:unp}. 

\subsubsection{Jacobson Normal Form and Invariant Factors.}
A different list of invariants of a finitely generated module of finite length over $R = \field{D}[ x ;\sigma,\delta]$,  with $\sigma$ bijective,  is that of invariant factors. They arise from the computation of a refinement of the diagonalisation algorithm of matrices in $R^{t\times m}$ described above. To be more precise, given $A \in R^{t \times m}$, the problem is the computation of $P \in \GL_t(R)$ and $Q \in \GL_m (R)$ such that $PAQ = J$, where 
\begin{equation}\label{JacobsonD}
J= \diag \{ f_{1}, \dots, f_r, 0, \dots, 0 \}
\end{equation}
 with the additional condition that $f_j$ is a total divisor of $f_{j+1}$ for all $j = 1, \dots, r-1$. Recall from \cite{Jacobson:1943} that $f \in R$ is a \emph{total divisor} of $g \in R$ if there exists a two sided ideal $I$ of $R$ such that $Rg \subseteq I \subseteq Rf$. An easy argument, taking that $Rf^* = f^*R \subseteq fR$ into account, shows that this is equivalent to the condition $Rg \subseteq fR$. A third equivalent condition is to require $RgR \subseteq Rf \cap fR$.  
 
 The diagonal matrix $J$ of  \eqref{JacobsonD}  is often called \emph{Jacobson normal form} of $A$. As for its existence concerns, we can assume, by Proposition \ref{diagonal}, that the matrix $A$ has been already reduced to a diagonal form $\Delta$ and, by an obvious inductive argument, we may assume that 
 \[
\Delta = \begin{pmatrix} b_1 & 0 \\
0 & b_2 \end{pmatrix}
 \]
with $\deg b_1  \leqslant   \deg b_2$.  Now, always following \cite{Jacobson:1943}, if $b_1$ is not a total divisor of $b_2$, then there exists $b \in R$ such that $bb_2 \notin b_1R$. By the  Right  Division Algorithm, $bb_2 = b_1 q + r_1$, where $q, r_1 \in R$ are nonzero polynomials and $\deg r_1 < \deg b_1$. Now, adding the second row of $\Delta$ multiplied on the left by $b$ to the first one, and substracting the first column of the resulting matrix multiplied on the right by $q$ to the second one, we reduce $\Delta$ to
\[
\begin{pmatrix} b_1 & r_1 \\ 
0 & b_2 \end{pmatrix} .
\]
Now, the discussion previous to Proposition \ref{diagonal} shows that this last matrix can be reduced by a sequence of row and column elementary operations to $\diag \{ b_1', b_2' \}$, with $\deg b_1' \leqslant \deg r_1 < \deg b_1$. After finitely many steps (in number less or equal than $\deg b_1$) we will arrive to a diagonal matrix $\diag  \{f_1, f_2 \}$ such that $f_1$ is a total divisor of $f_2$. We thus obtain

\begin{proposition}\cite{Jacobson:1943}\label{Jacobson}
Given $A \in R^{t \times m}$ there exist $P \in \GL_t(R)$ and $Q \in \GL_m(R)$ such that $PAQ = J = \diag \{ f_1, \dots, f_r, 0, \dots, 0 \}$, where $f_i$ is a total divisor of $f_{i+1}$ for all $i = 1, \dots, r-1$. 
\end{proposition}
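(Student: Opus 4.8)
The plan is to bootstrap from Proposition~\ref{diagonal}: first reduce $A$, by invertible row and column operations, to a diagonal matrix $\Delta=\diag\{b_1,\dots,b_r,0,\dots,0\}$, and then, again using only elementary operations, rearrange the nonzero diagonal entries into a total-divisor chain. Throughout I use the three equivalent descriptions of total divisibility recorded just before the statement: $f$ is a total divisor of $g$ iff $Rg\subseteq fR$ iff $RgR\subseteq Rf\cap fR$. The first form is the convenient one for \emph{detecting} failure of total divisibility, while the last, since $Rf\cap fR$ is a two-sided ideal, is what makes the relevant invariants stable under subsequent manipulations.

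The core is a $2\times 2$ step. Suppose $b_1$ is not a total divisor of $b_2$; then $Rb_2\not\subseteq b_1R$, so there is $b\in R$ with $bb_2\notin b_1R$. Right Euclidean Division (Algorithm~\ref{rightdivalg}) gives $bb_2=b_1q+r_1$ with $\deg r_1<\deg b_1$, and $r_1\neq 0$ precisely because $bb_2\notin b_1R$; in particular $\deg b_1\ge 1$. Adding $b$ times row $2$ to row $1$ (the invertible matrix $E_{1+b2}$) and then subtracting $q$ times column $1$ from column $2$ (the invertible matrix $E_{2+1(-q)}$) turns $\diag\{b_1,b_2\}$ into $\left(\begin{smallmatrix} b_1 & r_1\\ 0 & b_2\end{smallmatrix}\right)$, and Proposition~\ref{diagonal} applied to this $2\times 2$ matrix returns $\diag\{b_1',b_2'\}$ with $\deg b_1'\le\deg r_1<\deg b_1$. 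So each invocation of the step strictly lowers the degree of the upper-left entry, and since that degree is a nonnegative integer (and a constant entry is a unit, hence a total divisor of everything), after finitely many invocations the upper-left entry is a total divisor of the other.

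Next I run this inside the full matrix to make $b_1$ a total divisor of \emph{all} of $b_2,\dots,b_r$. After permuting rows and columns, assume $\deg b_1\le\deg b_i$ for all $i$. While there is an index $i\ge 2$ with the current $(1,1)$-entry not a total divisor of the current $(i,i)$-entry, apply the $2\times 2$ step to rows and columns $\{1,i\}$ only; since rows $1$ and $i$ (resp.\ columns $1$ and $i$) are zero outside these two positions, this leaves all other diagonal entries and all other zeros of $\Delta$ untouched while strictly decreasing $\deg b_1$, so the loop terminates. At that point every block entry $b_i$ ($i\ge 2$) satisfies $Rb_iR\subseteq Rb_1\cap b_1R$, hence lies in the two-sided ideal $I=Rb_1\cap b_1R$; because $I$ is two-sided, every row and every column operation performed from now on inside the $(t-1)\times(m-1)$ block keeps all of its entries inside $I$, and consequently $b_1$ remains a total divisor of whatever comes out of that block. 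I then recurse on the block $\diag\{b_2,\dots,b_r,0,\dots,0\}$, and multiply together the invertible matrices produced at every stage to obtain $P\in\GL_t(R)$ and $Q\in\GL_m(R)$ with $PAQ=\diag\{f_1,\dots,f_r,0,\dots,0\}$ and $f_1\mid f_2\mid\cdots\mid f_r$ in the total-divisor order, by construction.

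The step I expect to be the main obstacle is precisely this bookkeeping. One must check that each $2\times 2$ reduction genuinely decreases $\deg b_1$ — which is where $r_1\neq 0$ and the degree estimate of Proposition~\ref{diagonal} are used — and one must guard against the recursion undoing the total divisibility already forced by the pivot, since the new, smaller pivot need not be a total divisor of what the old one was. This is handled by recording the \emph{stronger} invariant ``all remaining block entries lie in the two-sided ideal $Rb_1\cap b_1R$'' rather than the weaker ``$b_1$ is a total divisor of the current diagonal entries'', and by phrasing the inner loop as ``while there exists $i$ with $\dots$'', so that one re-scans all indices after every reduction, the bounded pivot degree limiting the number of re-scans. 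Everything else is the routine verification that the displayed elementary matrices are invertible and act on $A$ in the stated way.
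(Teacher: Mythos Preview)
Your proof is correct and follows essentially the same route as the paper: the $2\times 2$ core step (choose $b$ with $bb_2\notin b_1R$, right-divide to produce $r_1$ of strictly smaller degree, re-diagonalize) is identical. Where the paper dismisses the passage from $2\times 2$ to general size as ``an obvious inductive argument'', you actually spell it out, and the observation that all block entries lie in the two-sided ideal $Rb_1\cap b_1R$---hence remain there under further block operations---is exactly the right bookkeeping device to justify that induction cleanly.
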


Observe that, in contrast with Proposition \ref{diagonal}, Proposition \ref{Jacobson} does not claim that there exists an algorithm to compute $J$. The problem is that in the reduction from $\diag (b_1, b_2)$ to $\diag (f_1,f_2)$ described before, the first step seems not to be in general constructive: how could we check whether $Rb_2 \subseteq b_1R$? If not, how do we compute $b \in R$ such that $bb_2 \notin b_1 R$? This makes ``difficult'' the computation of the Jacobson normal form of a matrix with coefficients in $R = \field{D}[ x ;\sigma,\delta]$.  We see, thus, that the following problem is interesting from the effective point of view.

\begin{problem}
Given $b_1, b_2 \in R = \field{D}[ x ;\sigma,\delta]$, how to decide whether $Rb_2 \subseteq b_1R$? If not, how to compute $b \in R$ such that $bb_2 \notin b_1R$? 
\end{problem}

However, in some circumstances such an algorithm  does  exist. For instance, if $R$ is finitely generated as a module over its center, then an algorithm for the computation of $b$ such that $bb_2 \notin b_1R$, if it exists, may be constructed in the spirit of the  algorithms  given in \cite{Gomez/alt:unp} for the computation of the bound of an Ore polynomial. This opens the possibility of the implementation of an algorithm for the computation of the Jacobson normal form over this kind of Ore extensions, that include the case $\mathbb{F}[ x ;\sigma]$,  with $\mathbb{F}$ finite. 

In the ``opposite'' case, namely, when $R = \field{D}[ x ;\sigma,\delta]$ is a simple ring, that is, the only proper two-sided ideal of  $R$  is $\{ 0 \}$, the Jacobson normal form of any finitely generated left $R$ module is of the form $\diag \{ 1, \dots, 1, f, 0, \dots, 0 \}$. This means that any finitely generated left $R$-module of finite length is cyclic, thus, generated by a \emph{cyclic vector}. A probabilistic algorithm for computing cyclic presentations (and, hence, Jacobson normal forms)  has been proposed recently \cite[Remark 5.2]{Levandovskyy/Schindelar:2012}  (see \cite{Churchill/Kovacic:2002} for a rather complete study of the differential field case, where $R = \mathbb{K}[x;\delta]$ for a commutative field $\mathbb{K}$). 

 Some intermediate cases have been also explored. Thus, in \cite{Giesbrecht/Heinle:2012}, an algorithm for computing the Jacobson normal form is proposed for $R = \field{k}(z)[x;\sigma]$, where $\sigma$ is the \emph{shift operator} on the field of fractions $\field{k}(z)$ of a (commutative) polynomial ring $\field{k}[z]$ over a commutative field $\field{k}$ of characteristic $0$, that is, $\sigma (z) = z + 1$.  This is an example of a non simple Ore extension of a field which is not finitely generated as a module over its center $\field{k}$. A systematic study of the Jacobson normal form over this kind of ``centerless''  Ore extensions, by means of a classification of the so called two-sided elements, has been developed in \cite{Foldenauer:2012}. In particular, the shape of the Jacobson normal form  of a matrix with coefficients in $\mathbb{K}[x;\sigma]$,  where $\sigma$ is an autormophism of infinite order of a commutative field $\mathbb{K}$, is given in \cite[Theorem 2.88]{Foldenauer:2012}.  

The elements $f_1, \dots, f_r$ appearing in the diagonal of the Jacobson normal form of $A$,  see \eqref{JacobsonD},  are called \emph{the invariant factors} of $A$.  Even the simplest situation shows that their uniqueness has to take a weak form: in $\mathbb{Q}(t)[ x ;d/dt]$ we have $t^{-1}  x  t =  x  + t^{-1}$. Thus, even for $1 \times 1$ matrices, the expected uniqueness of their ``invariant factors'' is far from the familiar uniqueness in the commutative case. The best one can obtain is  to look  at $A$ as the matrix of a presentation of a finitely generated left module: Making use of the Krull-Schmidt theorem, Jacobson obtained (see \cite[Theorem 31, Ch. 3]{Jacobson:1943}) the following result credited to Nakayama.

\begin{theorem}\cite{Jacobson:1943}
Let $M$ be a non zero left module over $R = \field{D}[ x ;\sigma,\delta]$. If $M$ is finitely generated and of finite length, then $M \cong R/Rf_1 \oplus \cdots \oplus R/Rf_r$, for $f_1, \dots, f_r \in R$ polynomials of positive degree such that $f_{j}$ is a total divisor of $f_{j +1}$ for $j = 1, \dots, r-1$. The polynomials $f_1, \dots, f_r$, called \emph{the invariant factors} of $M$, are unique up to similarity. 
\end{theorem}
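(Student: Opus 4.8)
The plan is to treat the two assertions separately. The existence of the invariant-factor decomposition is essentially a repackaging of Proposition~\ref{Jacobson}, whereas the uniqueness is where the Krull--Schmidt theorem, together with Nakayama's refinement, does the real work.

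\emph{Existence.} Since $R = \field{D}[x;\sigma,\delta]$ is a left PID it is left noetherian, so $M$ admits a finite presentation as in \eqref{finpres}; let $A \in R^{t\times m}$ be the matrix of its left-hand map, so that $M \cong R^m/\mathrm{row}(A)$. Proposition~\ref{Jacobson} provides $P \in \GL_t(R)$ and $Q \in \GL_m(R)$ with $PAQ = J = \diag\{f_1, \dots, f_r, 0, \dots, 0\}$, the $f_i$ nonzero and each $f_i$ a total divisor of $f_{i+1}$. Arguing exactly as in the discussion preceding Theorem~\ref{torsiontorsionfree} (with $J$ in place of the diagonal matrix $\Delta$ there), the change of presentation induced by $P, Q$ yields $M \cong R^{m-r} \oplus R/Rf_1 \oplus \cdots \oplus R/Rf_r$. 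Now $R$ has infinite length as a left $R$-module, witnessed by $R \supsetneq Rx \supsetneq Rx^2 \supsetneq \cdots$, so no nonzero free left $R$-module can be a direct summand of the finite-length module $M$; hence $m = r$. Finally, in a total-divisor chain the units occur first (if $f_{i+1}$ is a unit while $f_i$ is not, then $R = Rf_{i+1} \subseteq f_i R$ forces $f_i$ to be a unit too), and $R/Rf_i = 0$ exactly when $f_i$ is a unit; deleting these summands and relabelling leaves polynomials $f_1, \dots, f_r$ of positive degree still forming a total-divisor chain, with $r \geq 1$ since $M \neq 0$.

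\emph{Uniqueness.} Suppose $M \cong R/Rf_1 \oplus \cdots \oplus R/Rf_r \cong R/Rg_1 \oplus \cdots \oplus R/Rg_s$, both total-divisor chains of positive-degree polynomials. Each $R/Rf_i$ has finite length, hence decomposes into finitely many indecomposable left $R$-modules --- concretely, via Ore's third decomposition theorem \cite{Ore:1933}, which expresses $f_i$ as an l-lcm of pairwise coprime indecomposable polynomials --- so $M$ acquires a decomposition into indecomposables. By the Krull--Schmidt theorem, applicable because $M$ has finite length, the multiset of similarity classes of the indecomposable summands that appear (the \emph{elementary divisors} of $M$) is an invariant of $M$; the two chains therefore produce the same such multiset. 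It thus suffices to prove that a total-divisor chain $(f_1, \dots, f_r)$ of positive-degree polynomials is reconstructible, uniquely and up to similarity, from the multiset of elementary divisors. The reconstruction is the greedy procedure familiar from the commutative case: the indecomposable constituents of the last invariant factor $f_r$ are obtained by selecting, among the elementary divisors, one of largest bound in each ``prime'' that occurs, and forming their l-lcm; deleting that selection and iterating yields $f_{r-1}, \dots, f_1$.

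The step I expect to be the main obstacle is showing that this greedy reconstruction is \emph{forced}, i.e. that any two total-divisor chains with the same elementary divisors have the same length and term-by-term similar entries. I would argue by induction on $r$. The key lever is that total divisibility is governed by bounds: one checks that if $f_i$ is a total divisor of $f_{i+1}$ and $f_{i+1} \neq 0$, then $f_i$ is bounded and $Rf_{i+1}^* \subseteq Rf_i^*$, so the bounds form a descending chain and $\Ann_R(M) = Rf_r^*$ recovers the bound of the top invariant factor; combined with the primary decomposition and the characterization of bounded indecomposables by their bounds (Krull--Schmidt again), this pins down $f_r$ up to similarity and lets $R/Rf_r$ be split off, reducing the claim to $R/Rf_1 \oplus \cdots \oplus R/Rf_{r-1}$. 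Matching the constituents of $f_r$ against the ``top'' elementary divisors prime by prime, and handling the boundary cases (for instance $r = 1$, possibly with an unbounded polynomial, or an unbounded top factor $f_r$), is precisely the combinatorial bookkeeping carried out in \cite[Ch.~3]{Jacobson:1943}, which I would cite rather than reproduce.
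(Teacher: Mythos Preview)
Your proposal is correct and follows exactly the approach the paper indicates: the paper does not give a self-contained proof of this theorem but rather derives existence from Proposition~\ref{Jacobson} (the Jacobson normal form) via the same change-of-presentation argument you invoke, and for uniqueness explicitly points to the Krull--Schmidt theorem and the elementary-divisor decomposition, citing \cite[Theorem 31, Ch.~3]{Jacobson:1943} for the details you outline. Your write-up simply fills in more of the bookkeeping than the paper chooses to reproduce.
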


Again, in order to decide whether two modules are isomorphic, we see how central is the problem of deciding if two given polynomials are similar.   For instance, if $R$ is a simple ring (i.e., there are no nontrivial two sided ideals), then the Jacobson normal form of any matrix is $\diag \{ 1, \dots, 1, f, 0, \dots, 0 \}$. So, any finitely generated left $R$-module of finite length is cyclic, and it is isomorphic to $R/Rf$, where $f$ is ``its'' invariant factor. Thus, the problem of deciding whether two given finitely generated and indecomposable modules are isomorphic reduces, whenever the computation of Jacobson normal form is possible, to the problem of checking whether  two  given polynomials are similar.

\section{Left PBW Rings}\label{sec:LPBW}

From the classical Poincar\'e-Birkhoff-Witt Theorem (see, e.g. \cite{Dixmier:1977}) we know that given a finite-dimensional Lie algebra $\mathfrak{g}$ over a field $\field{k}$, with an ordered $\field{k}$-basis $x_1, \dots, x_n$, the standard monomials $x_1^{\alpha_1} \cdots x_n^{\alpha_n}$ for $(\alpha_1, \dots, \alpha_n) \in \Nn$ form a basis of the enveloping universal algebra $U(\mathfrak{g})$ as a  vector space  over $\field{k}$. Thus, the elements of $U(\mathfrak{g})$ resemble commutative polynomials in the usual polynomial ring $\field{k}[x_1, \dots,x_n]$, even though that the multiplication is not commutative: 
in $U(\mathfrak{g})$ we have $x_j x_i = x_ix_j + [x_j,x_i]$, where $[-,-]$ denotes the Lie bracket of $\mathfrak{g}$. This suggests that Gr\"obner basis methods might be adapted to develop effective algorithms in $U(\mathfrak{g})$. The Gr\"obner bases theory for $U(\mathfrak{g})$ was already introduced in  \cite{Lassner:1985}, and further developed in \cite{Apel/Lassner:1988},  and very soon extended to the more general settings of \emph{solvable polynomial algebras} \cite{Kandri-Rody/Weispfenning:1988} and \emph{solvable polynomial rings} \cite{Kredel:1993}. The common feature of all these non-commutative rings is that the usual multivariable Division Algorithm and the formulation of Buchberger's Theorem for commutative rings  hold  with minor changes. At the heart of the algorithms running for commutative polynomials over a field is the fact that the leading monomial (with respect to some term ordering) of a product of polynomials is the product of the leading  monomials  of the factors.  Since in the non-commutative setting the product of two monomials is not longer a monomial, we cannot expect a direct translation from the commutative case.  Looking at the ``exponents'' of the leading monomials, what is preserved under all these non-commutative generalizations is that the exponent of a product is the sum of the exponents of the factors. This point of view is very explicit in \cite{Castro:1984,Galligo:1983,Lejeune:1984}, and also in \cite{Bueso/Castro/Gomez/Lobillo:1998,Lobillo:1998}.  Going on with this idea, left PBW rings were introduced in \cite{Bueso/Gomez/Lobillo:2001a}. 

(Left) PBW rings cover a wide range of examples, from differential operator rings over a skew field (see Corollary \ref{cor:diffop}) to the aforementioned universal enveloping algebras of finite dimensional Lie algebras $U(\mathfrak{g})$.  Observe that the latter are not in general iterated Ore extensions of the base field $\field{k}$.  Moreover, besides a good algorithmic theory (see Section \ref{sec:Buchberger}), PBW rings have nice algebraic properties (see the last subsection of the paper).

\subsection{(Left) PBW Rings}
Let $R$ be a ring containing a skew field $\field{D}$. Given $x_1, \dots, x_n \in R$ and $\gordo{\alpha} = (\alpha_1, \dots, \alpha_n) \in \Nn$, we use the notation $\gordo{x}^{\aalpha} = x_1^{\alpha_1} \cdots x_n^{\alpha_n}$.  In particular,  $\gordo{x}^{\gordo{0}} = 1$.  Here, $\Nn$ denotes the additive monoid of all vectors $(\alpha_1, \dots, \alpha_n)$ with $\alpha_i$ a nonnegative integer for every $i= 1, \dots, n$. The ring $R$ is said to be \emph{left polynomial} over $\field{D}$ in $x_1, \dots, x_n \in R$ if  every $f \in R$ has a unique \emph{standard representation}
\begin{equation}\label{standard}
f = \sum_{\aalpha \in \Nn} c_{\aalpha}\xx^{\aalpha}, \qquad (c_{\aalpha} \in \field{D}) \, .
\end{equation}
Obviously, we understand that the set $\Newton (f) := \{ \aalpha \in \Nn : c_{\aalpha} \neq 0 \}$ is finite. In other words, $\{ \xx^{\aalpha} : \aalpha \in \Nn \}$ is a basis of $R$ as a left vector space (or module) over $\field{D}$. 

\begin{definition}\label{A.1.4.1}
We say that a total order $\preceq$ on $\Nn$ is \emph{admissible} if 
\begin{enumerate}[(1)]
\item $\gordo{0}\preceq \gordo{\alpha}$ for every $\gordo{\alpha}\in
\Nn$;
\item  $\gordo{\alpha} + \gordo{\gamma} \preceq \gordo{\beta} + \gordo{\gamma}$ for all
$\gordo{\alpha}, \gordo{\beta}, \gordo{\gamma} \in \Nn$ with $\gordo{\alpha} \preceq \gordo{\beta}$.
\end{enumerate}
\end{definition}

Given a left polynomial ring $R$ over $\field{D}$ in $x_1, \dots, x_n \in R$, and an admissible ordering $\preceq$ on $\Nn$, we may define \emph{the exponent} of $f \in R$ as \begin{equation*} \exp{f} = \max_{\preceq} \Newton (f),\end{equation*} if $f \neq 0$, and $\exp{0} = - \infty$.  Here, the symbol $- \infty$ is assumed to behave properly with respect to the addition and the ordering of $\Nn$. 

By $\eepsilon_i$ we denote the vector in $\Nn$ all of whose entries are $0$ except for a value $1$ in the $i$-th component.

\begin{theorem}\label{LPBW} \cite[Theorem 1.2]{Bueso/Gomez/Lobillo:2001a}
Let $R$ be a left polynomial ring over a skew field $\field{D}$ in $x_1, \dots, x_n$, and $\preceq$ be an admissible ordering on $\Nn$. The following statements are equivalent:
\begin{enumerate}[(a)]
\item $\exp{fg} = \exp{f} + \exp{g}$ for all $f, g \in R$;
\item \begin{enumerate}[1.]
\item\label{Q} for every $1 \leqslant i < j \leqslant n$ there exist $0 \neq q_{ji} \in D$ and $p_{ji} \in R$ such that
\begin{equation*}
x_j x_i = q_{ji} x_ix_j + p_{ji}, \qquad  \exp{p_{ji}} \prec \eepsilon_i + \eepsilon_j \,;
\end{equation*}
\item\label{Q'} for every $1 \leqslant j \leqslant n$ and every $0 \neq a \in \field{D}$  there exist $0 \neq q_{ja} \in \field{D}$ and $p_{ja} \in R$ such that 
\begin{equation*}
x_j a = q_{ja} x_j + p_{ja}, \qquad \exp{p_{ja}} \prec \eepsilon_j \, ;
\end{equation*}
\end{enumerate}
\item \begin{enumerate}[1.]
\item for every $\aalpha, \bbeta \in \Nn$, there exist $0 \neq q_{\alpha, \beta} \in \field{D}$ and  $p_{\aalpha,\bbeta} \in R$ such that
\begin{equation*}
\xx^{\aalpha}\xx^{\bbeta} = q_{\aalpha,\bbeta}\xx^{\aalpha + \bbeta} + p_{\aalpha, \bbeta}, \qquad \exp{p_{\aalpha,\bbeta}} \prec \aalpha + \bbeta \, ;
\end{equation*}
\item for every $\aalpha \in \Nn$ and every $0 \neq a \in \field{D}$ there exist $0 \neq a^{\aalpha} \in \field{D}$, $p_{\aalpha, a} \in R$ such that 
\begin{equation*}
\xx^{\aalpha} a = a^{\aalpha} \xx^{\aalpha} + p_{\aalpha,a}, \qquad \exp{p_{\aalpha, a}} \prec \aalpha \, .
\end{equation*}
\end{enumerate}
\end{enumerate}
\end{theorem}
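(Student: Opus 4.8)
I establish the equivalences cyclically, (a)$\Rightarrow$(c)$\Rightarrow$(b)$\Rightarrow$(a), the last step being the substantial one. Two remarks first. An admissible order $\preceq$ on $\Nn$ is automatically a well-order: if $\aalpha\le\bbeta$ componentwise then $\bbeta=\aalpha+\ggamma$ for some $\ggamma\in\Nn$, hence $\aalpha\preceq\bbeta$ by conditions (1)--(2) of Definition~\ref{A.1.4.1}, and Dickson's Lemma finishes the job; thus Noetherian induction on $(\Nn,\preceq)$ is available. Also, for each $\aalpha\in\Nn$ the element $\xx^{\aalpha}$ is, by the very definition of a left polynomial ring, one of the basis vectors, so $\exp{\xx^{\aalpha}}=\aalpha$; in particular $\xx^{\eepsilon_i}=x_i$, $\exp{x_i}=\eepsilon_i$, and $\exp{a}=\gordo{0}$ for $0\neq a\in\field{D}$. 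Now, assuming (a), write $\xx^{\aalpha}\xx^{\bbeta}$ in standard form: its $\preceq$-largest exponent is $\exp{\xx^{\aalpha}\xx^{\bbeta}}=\exp{\xx^{\aalpha}}+\exp{\xx^{\bbeta}}=\aalpha+\bbeta$, so taking $q_{\aalpha,\bbeta}$ to be the nonzero coefficient of $\xx^{\aalpha+\bbeta}$ and $p_{\aalpha,\bbeta}$ the lower terms proves condition~1 of (c), and the same argument applied to $\xx^{\aalpha}a$ proves condition~2; hence (a)$\Rightarrow$(c). Specializing condition~1 of (c) to $\aalpha=\eepsilon_j,\bbeta=\eepsilon_i$ with $i<j$ (using $\xx^{\eepsilon_i+\eepsilon_j}=x_ix_j$) and condition~2 to $\aalpha=\eepsilon_j$ gives (c)$\Rightarrow$(b).

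For (b)$\Rightarrow$(a) I first upgrade (b) back to (c). I prove, by induction on $\ggamma\in\Nn$ with respect to $\preceq$, the conjunction of: (I) $\xx^{\aalpha}\xx^{\bbeta}=q\,\xx^{\ggamma}+p$ with $q\in\field{D}\setminus\{0\}$ and $\exp{p}\prec\ggamma$ for all pairs with $\aalpha+\bbeta=\ggamma$; and (II) $\xx^{\ggamma}a=a'\xx^{\ggamma}+p$ with $a'\in\field{D}\setminus\{0\}$ and $\exp{p}\prec\ggamma$ for all $0\neq a\in\field{D}$. From the inductive hypothesis one reads off the \emph{stability estimates}: $\exp{h\xx^{\bbeta}}\preceq\exp{h}+\bbeta$ whenever $\exp{h}+\bbeta\prec\ggamma$ (expand $h$ in standard form and apply (I) term by term, each term having exponent-sum below $\ggamma$), and symmetrically $\exp{\xx^{\bbeta}h}\preceq\bbeta+\exp{h}$ using (I) and (II). The step for (II) is routine: write $\xx^{\ggamma}=x_j\xx^{\ggamma-\eepsilon_j}$ with $j$ the least index in $\supp(\ggamma)$, apply (II) to $\ggamma-\eepsilon_j$, move the resulting scalar past $x_j$ by the second relation of (b), and absorb the errors by stability. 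The step for (I) is the heart of the matter; I argue by cases on the \emph{largest} index $m$ of $\supp(\aalpha+\bbeta)$. If $\bbeta_m>0$, factor $\xx^{\bbeta}=\xx^{\bbeta-\eepsilon_m}x_m$, apply (I) to the strictly smaller pair $(\aalpha,\bbeta-\eepsilon_m)$, and right-multiply by $x_m$: since $\supp(\aalpha+\bbeta-\eepsilon_m)$ lies below $m$ we have $\xx^{\aalpha+\bbeta-\eepsilon_m}x_m=\xx^{\aalpha+\bbeta}$ literally, so no relation of (b) intervenes and stability bounds the error. If $\bbeta_m=0$ then $\aalpha_m>0$: for $\aalpha\neq\eepsilon_m$ one factors $\xx^{\aalpha}=\xx^{\aalpha-\eepsilon_m}x_m$ and reduces, via (I) for $(\eepsilon_m,\bbeta)$, (II) for $\aalpha-\eepsilon_m\prec\ggamma$, and the pair $(\aalpha-\eepsilon_m,\eepsilon_m+\bbeta)$ which falls under the first case; for $\aalpha=\eepsilon_m$ (so $\supp(\bbeta)$ lies strictly below $m$) one writes $x_m\xx^{\bbeta}=x_mx_k\xx^{\bbeta-\eepsilon_k}$ with $k$ the least index in $\supp(\bbeta)$, invokes the commutation relation $x_mx_k=q_{mk}x_kx_m+p_{mk}$ of (b), uses (I) for the strictly smaller pair $(\eepsilon_m,\bbeta-\eepsilon_k)$ and (II) to move scalars past $x_k$, and clears every correction by stability (note $\exp{p_{mk}}\prec\eepsilon_k+\eepsilon_m$). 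The only same-level cross-references land in the first case, which depends solely on strictly lower levels, so the induction is well founded; this gives (c).

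Finally (c)$\Rightarrow$(a). For nonzero $f=\sum_{\aalpha}c_{\aalpha}\xx^{\aalpha}$ and $g=\sum_{\bbeta}d_{\bbeta}\xx^{\bbeta}$ set $\mu=\exp{f}$, $\nu=\exp{g}$ and expand $fg=\sum_{\aalpha,\bbeta}c_{\aalpha}\xx^{\aalpha}d_{\bbeta}\xx^{\bbeta}$. For each summand with $c_{\aalpha}d_{\bbeta}\neq0$, push $d_{\bbeta}$ past $\xx^{\aalpha}$ by condition~2 of (c) and combine $\xx^{\aalpha}\xx^{\bbeta}$ by condition~1, so that the summand equals a scalar multiple of $\xx^{\aalpha+\bbeta}$ plus terms of exponent $\prec\aalpha+\bbeta$ (the error coming from condition~2 is bounded by the stability estimate). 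Since $\aalpha\preceq\mu$ and $\bbeta\preceq\nu$, the exponent $\aalpha+\bbeta=\mu+\nu$ occurs only for $(\aalpha,\bbeta)=(\mu,\nu)$, whose coefficient is a product of nonzero elements of the skew field $\field{D}$; therefore $\exp{fg}=\mu+\nu=\exp{f}+\exp{g}$. The case $f=0$ or $g=0$ is trivial.

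The single genuine obstacle is the bookkeeping in the induction that derives (c) from (b): one must arrange that every recursive call --- including those concealed inside the error polynomials produced by the two relations of (b) --- is issued against a strictly smaller target exponent $\ggamma$. Peeling off the largest-support variable is precisely what makes the generic configuration purely formal, leaving the relations of (b) to be used only in the few exceptional configurations and always at a strictly smaller level.
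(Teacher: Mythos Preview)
The paper does not prove this theorem; it merely states it with a citation to \cite[Theorem 1.2]{Bueso/Gomez/Lobillo:2001a}, so there is no in-paper argument to compare against. Your sketch is correct and follows what is essentially the standard route: the implications (a)$\Rightarrow$(c)$\Rightarrow$(b) are formal, and the real work is the well-founded induction on $(\Nn,\preceq)$ that rebuilds (c) from (b). Your handling of that induction is sound --- in particular, the device of peeling off the \emph{largest}-index variable so that the resulting product $\xx^{\ggamma-\eepsilon_m}x_m$ is a literal standard monomial, and the explicit acknowledgment that the same-level cross-references all point to the already-established ``first case'', are exactly what makes the recursion terminate. The stability estimates you isolate are the right auxiliary tool, and your derivation of them from the inductive hypothesis (all invoked pairs have exponent-sum strictly below the current $\ggamma$) is legitimate. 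One small wording point: in the final step (c)$\Rightarrow$(a) you need not appeal to any ``stability estimate'' from the induction --- condition~(c) itself immediately bounds the error $p_{\aalpha,a}\xx^{\bbeta}$ term-by-term via (c.1), since each monomial of $p_{\aalpha,a}$ has exponent $\prec\aalpha$. Apart from that, the argument is complete.
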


\begin{definition}\label{def:LPBW}
A left polynomial ring $R$ satisfying the equivalent conditions  of  Theorem \ref{LPBW} is said to be a \emph{left PBW ring} with respect to $\preceq$. We will use the notation
\[
R=\field{D}\{x_1,\ldots,x_n;  Q,Q',\preceq\},
\]
where
\[
Q=\{x_{j} x_{i} = q_{ji} x_{i} x_{j} + p_{ji};\ 1 \leqslant i < j \leqslant n \}
\]
and
\[
Q'=\{x_{j} a = q_{ja} x_{j} + p_{ja};\ 1\leqslant j \leqslant n, 0 \neq a \in \field{D} \} \, .
\]
\end{definition}

There are some pertinent remarks concerning Definition \ref{def:LPBW}. 

\begin{remark}
Solvable polynomial rings from \cite{Kredel:1993} are left PBW with the additional requirements that $\exp{p_{ja}} = \gordo{0}$ for every $a \in \field{D}$, and $ j \in \{ 1, \dots, n \}$,  and that $\{\xx^{\aalpha} : \aalpha \in \Nn \}$ is also a basis of $R$ as a \emph{right} vector space over $\field{D}$. Thus, they are examples of (twosided) PBW rings in the sense of \cite{Bueso/Gomez/Lobillo:2001a}. In fact, once fixed $\field{D}$, $x_1, \dots, x_n \in R$ and $\preceq$ on $\Nn$, it can be proved \cite[Theorem 1.9]{Bueso/Gomez/Lobillo:2001a} that $R$ is right polynomial and left PBW if and only if $R$ is left polynomial and right PBW. The ring is said to be a \emph{PBW ring with respect to $\preceq$} if it satisfies these equivalent conditions. Thus, every solvable polynomial ring is a PBW ring, but the converse fails, as \cite[Example 1.6]{Bueso/Gomez/Lobillo:2001a} shows (see also Proposition \ref{triangular} below). 
\end{remark}

\begin{remark}
Inspired by the commutative graded structures from \cite{Robbiano:1986}, a rather general class of algebras, called G-algebras (a more general notion than that of $G$-algebra from \cite{Levandovskyy:2005}), was introduced in \cite{Apel:1992}. Essentially, these G-algebras are filtered by an ordered semigroup, and the idea is to lift Gr\"obner bases from the associated graded algebra to the G-algebra. Of course, the strategy of this approach  is  to find a simpler  associated graded algebra, and  it is understood  that the lifting property from it to the filtered algebra is feasible. Thus, further requirements are to be assumed in order to make this technique constructive. For example, PBW rings fit in this scheme since they have an $(\Nn,\preceq)$-filtration such that the associated $\Nn$-graded ring (see \cite{Gomez:1999}) is isomorphic to a crossed product $\field{D} \ast \Nn$ (see the proof of \cite[Proposition 1.10]{Bueso/Gomez/Lobillo:2001a}). However, the advantages of lifting computations and Gr\"obner bases from $\field{D} \ast \Nn$ to the PBW ring are not clear, since, anyway, an exponent with respect to $\preceq$ has to be used. 
\end{remark}

\begin{remark}
A basic example of PBW ring (with respect  to  any $\preceq$) is the usual polynomial ring $\field{D}[x_1, \dots, x_n]$ in some variables $x_1, \dots, x_n$, where the multiplication is prescribed by the rules $x_ia = a x_i$, and $x_jx_i = x_ix_j$ for all $a \in  \field{D}$ and $i, j \in \{1, \dots, n \}$. In \cite{Kredel:1993} (following the setup of \cite{Kandri-Rody/Weispfenning:1988}) solvable polynomial rings are conceived as a kind of deformations of $\field{D}[x_1, \dots,x_n]$, in the sense that the polynomials are multiplied accordingly a ``new'' non-commutative multiplication $*$. By \cite[Proposition 3.2.5]{Kredel:1993},  $f*g = c fg + h$ for uniquely determined $0 \neq c \in \field{D}$ and $h \in R$ with $\exp{h} \prec \exp{f} + \exp{g}$, where, for a moment, $fg$ denotes the ``commutative'' multiplication (it is not really commutative, unless $\field{D}$ is assumed to be commutative). But, once $*$ is assumed to exist, the process may be reversed, and, hence, the ``commutative'' multiplication can be recovered from $*$, since $fg = c^{-1}(f*g) - c^{-1}h$. We see, thus, that the pre-existence of the ``commutative'' multiplication on the  left  $\field{D}$-vector space with basis $\{ \xx^{\aalpha} : \aalpha \in \Nn \}$ plays no essential role. 
\end{remark}

\begin{corollary}\label{cor:estrellapunto}
Let $R$ be a left polynomial ring over $\field{D}$ in $x_1, \dots, x_n$. Let $*$ denote the multiplication of $R$.  Define a new multiplication on the standard monomials of the left $ \field{D}$-vector space $R$ as $a\xx^{\aalpha} b \xx^{\bbeta} = ab \xx^{\aalpha + \bbeta}$ for $a, b \in \field{D}, \aalpha, \bbeta \in \Nn$. Since  $\{\xx^{\aalpha} : \aalpha \in \Nn \}$ is a basis of $R$ as a left $\field{D}$-vector space, this multiplication extends to a new multiplication on $R$ (denoted here by juxtaposition) making of $R$ a ring isomorphic to $\field{D}[x_1, \dots, x_n]$.  Given an admissible ordering $\preceq$ on $\Nn$, the following statements are equivalent:
\begin{enumerate}[(a)]
\item $R$ is a left PBW ring with respect to $\preceq$;
\item\label{estrellapunto} for every nonzero $f, g \in R$, $f*g = c fg + h$  for some  $0 \neq c \in \field{D}$ and $h \in R$ with $\exp{h} \prec \exp{f} + \exp{g}$. 
\end{enumerate}
\end{corollary}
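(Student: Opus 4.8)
The plan is to rephrase both conditions purely in terms of $\preceq$-exponents and to play them against each other, using as a bridge the elementary fact that the \emph{juxtaposition} product on $R$ is already perfectly well behaved. So the first step is to record that, for nonzero $f, g \in R$, one has $\exp{fg} = \exp{f} + \exp{g}$ and $\lc{fg} = \lc{f}\,\lc{g}$ for the juxtaposition product. Indeed, writing $f = \sum_{\aalpha} c_{\aalpha}\xx^{\aalpha}$ and $g = \sum_{\bbeta} d_{\bbeta}\xx^{\bbeta}$, the definition of juxtaposition gives $fg = \sum_{\aalpha,\bbeta} c_{\aalpha}d_{\bbeta}\,\xx^{\aalpha + \bbeta}$; since $\preceq$ is admissible, the addition of $\Nn$ is strictly $\preceq$-monotone, so over $\Newton(f)\times\Newton(g)$ the sum $\aalpha + \bbeta$ is $\preceq$-largest precisely at the pair $(\exp{f},\exp{g})$, where the resulting coefficient $\lc{f}\,\lc{g}$ is nonzero because $\field{D}$ is a domain. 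The second preliminary is that left multiplication by an element $c\in\field{D}$ is the same operation for the given product $*$ and for juxtaposition, since both restrict to the free left $\field{D}$-module structure of $R$ on the basis $\{\xx^{\aalpha}\}$; hence for $0 \neq c\in\field{D}$ and nonzero $f$ we get $\exp{c\,f} = \exp{f}$ and $\lc{c\,f} = c\,\lc{f}$.

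For $(a)\Rightarrow(b)$, assume $R$ is left PBW, so by Theorem \ref{LPBW} one has $\exp{f*g} = \exp{f}+\exp{g}$ for all $f, g \in R$. Fix nonzero $f, g$; then $f*g$ and the juxtaposition product $fg$ share the exponent $\exp{f}+\exp{g}$, so I cancel their leading terms: put $c = \lc{f*g}\,(\lc{f}\,\lc{g})^{-1}\in\field{D}\setminus\{0\}$ and $h = f*g - c\cdot fg$. The coefficient of $\xx^{\exp{f}+\exp{g}}$ in $h$ is $\lc{f*g} - c\,\lc{f}\,\lc{g} = 0$, while neither $f*g$ nor $c\cdot fg$ carries a monomial of exponent $\succ \exp{f}+\exp{g}$; hence $\exp{h}\prec\exp{f}+\exp{g}$ (with the convention $\exp{0} = -\infty$ taking care of $h = 0$), which is $(b)$.

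For $(b)\Rightarrow(a)$, fix nonzero $f, g$ and write $f*g = c\cdot fg + h$ with $0\neq c\in\field{D}$ and $\exp{h}\prec\exp{f}+\exp{g}$ as in $(b)$. By the two preliminaries, $\exp{c\cdot fg} = \exp{fg} = \exp{f}+\exp{g}$, which strictly dominates $\exp{h}$, so the leading term of $c\cdot fg$ survives in the sum and $\exp{f*g} = \exp{f}+\exp{g}$. Since this also holds trivially when $f$ or $g$ is $0$, condition (a) of Theorem \ref{LPBW} is satisfied and $R$ is a left PBW ring with respect to $\preceq$.

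The whole argument is short; the only point needing care --- the ``main obstacle'', such as it is --- is getting the two preliminary observations right: that the juxtaposition product respects exponents and leading coefficients, and that left multiplication by $\field{D}$ is independent of which of the two products is used. One must also keep in mind that $\field{D}$ is noncommutative, so the cancelling scalar has to be taken as the one-sided quotient $c = \lc{f*g}\,(\lc{f}\,\lc{g})^{-1}$ acting on the left.
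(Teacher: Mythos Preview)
Your proof is correct and follows essentially the same approach as the paper's: cancel leading terms for $(a)\Rightarrow(b)$, and read off exponent additivity for $(b)\Rightarrow(a)$. The only minor variations are that you compute $c$ directly as $\lc{f*g}\,(\lc{f}\,\lc{g})^{-1}$ rather than via the explicit formula $\lc{f}\,\lc{g}^{\exp{f}}q_{\exp{f},\exp{g}}\,\lc{g}^{-1}\lc{f}^{-1}$ cited from \cite{Bueso/Gomez/Verschoren:2003}, and for $(b)\Rightarrow(a)$ you verify condition (a) of Theorem~\ref{LPBW} for arbitrary $f,g$ whereas the paper specializes to monomials to check condition~(c); both routes are immediate.
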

\begin{proof}
If $R$ is a left PBW ring, then $\exp{f*g} = \exp{f} + \exp{g} = \exp{fg}$. Taking $c = \lc{f}\lc{g}^{\exp{f}}q_{\exp{f},\exp{g}}\lc{g}^{-1}\lc{f}^{-1} \in \field{D}$, we get from \cite[Corollary 2.10, Ch. 2]{Bueso/Gomez/Verschoren:2003} that $\exp{f*g - cfg} \prec \exp{f} + \exp{g}$. Conversely, if $R$ satisfies the second condition, then putting $f= \xx^{\aalpha}, g = \xx^{\bbeta}$ we get the first condition in the second equivalent statement in Theorem \ref{LPBW}, while the second one is obtained with $f = \xx^{\aalpha}, g = a$. 
\end{proof}

 Note that $c$ and $h$ in Corollary \ref{cor:estrellapunto}.\eqref{estrellapunto} are uniquely determined by $f$ and $g$. 

\begin{remark}
If $R = \field{D}\{x_1,\ldots,x_n;  Q,Q',\preceq\}$ is a left PBW ring, then the elements $q_{ji}, p_{ji}, q_{ja}, p_{ja}$ appearing in $Q$ and $Q'$ are far from being arbitrary. Thus, for instance,  the map $(-)^{\aalpha} : \field{D} \to \field{D}$ sending $a$ to $a^{\aalpha}$ (see Theorem \ref{LPBW}.(c)) is a ring endomorphism of $\field{D}$ for each $\aalpha \in \Nn$. To see this, just use the equality $(\xx^{\aalpha}a)b = \xx^{\aalpha}(ab)$ for all $a, b \in \field{D}$. In particular, the map $(-)^{\eepsilon_j} : \field{D} \to \field{D}$ sending $a \mapsto a^{\eepsilon_j} = q_{ja}$ gives a ring endomorphism for each $j = 1, \dots, n$. In general, the constants $q_{ji}, p_{ji}, q_{ja}, p_{ja}$ are subject to constraints of different types imposed by the associativity of the product of $R$ in conjunction with the linear independence over $\field{D}$ of the standard monomials $\xx^{\aalpha}$. An example of this kind of reasoning appears in the proof of the following proposition. 

\begin{proposition}\label{PBW}
Let $R=\field{D}\{x_1, \dots, x_n;\ Q,\ Q', \ \preceq\}$ be a left PBW ring. Then $R$ is a PBW ring if and only if $(-)^{\eepsilon_i}  : \field{D} \to \field{D}$ is an automorphism for every $i = 1, \dots, n$. 
\end{proposition}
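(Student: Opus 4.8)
The plan is to reduce the statement to a question about whether the standard monomials form a \emph{right} $\field{D}$-basis of $R$, and then to a property of the endomorphisms $(-)^{\aalpha}$. Since $R$ is already left polynomial and left PBW, by \cite[Theorem 1.9]{Bueso/Gomez/Lobillo:2001a} (see the remark after Definition~\ref{def:LPBW}) it is a PBW ring precisely when it is also right polynomial, i.e. when $\{\xx^{\aalpha} : \aalpha \in \Nn\}$ is a basis of $R$ as a right $\field{D}$-vector space. Right linear independence is automatic and needs no hypothesis: for nonzero $c \in \field{D}$ the leading term of $\xx^{\bbeta} c$ is $c^{\bbeta}\xx^{\bbeta}$ with $c^{\bbeta} \neq 0$, because $(-)^{\bbeta}$ is a unital ring endomorphism of a skew field and hence injective; comparing the largest exponent occurring in any purported right dependence among the $\xx^{\aalpha}$ then forces it to be trivial. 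So everything comes down to showing that $M := \sum_{\aalpha \in \Nn}\xx^{\aalpha}\field{D}$ equals $R$ if and only if each $(-)^{\eepsilon_i}$ is an automorphism.

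The technical heart is a lemma I would isolate first: $(-)^{\aalpha} : \field{D} \to \field{D}$ is an automorphism for every $\aalpha \in \Nn$ if and only if $(-)^{\eepsilon_i}$ is for every $i$. Only the ``if'' direction is nontrivial, and I would prove it by induction on $\alpha_1 + \cdots + \alpha_n$: write $\aalpha = \eepsilon_i + \ggamma$ for some $i$ with $\alpha_i \geq 1$ and compare the coefficients of $\xx^{\aalpha}$ on the two sides of the associativity identity $(\xx^{\eepsilon_i}\xx^{\ggamma})a = \xx^{\eepsilon_i}(\xx^{\ggamma}a)$, $a \in \field{D}$, using the PBW relations $\xx^{\eepsilon_i}\xx^{\ggamma} = q_{\eepsilon_i,\ggamma}\xx^{\aalpha} + (\text{lower})$, $\xx^{\ggamma}a = a^{\ggamma}\xx^{\ggamma} + (\text{lower})$, $\xx^{\eepsilon_i}b = b^{\eepsilon_i}\xx^{\eepsilon_i} + (\text{lower})$, and checking in each case that after the necessary reorderings the ``lower'' terms stay strictly below $\aalpha$. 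This yields $q_{\eepsilon_i,\ggamma}\,a^{\aalpha} = (a^{\ggamma})^{\eepsilon_i}\,q_{\eepsilon_i,\ggamma}$, i.e. $(-)^{\aalpha}$ is conjugation by $q_{\eepsilon_i,\ggamma}^{-1}$ composed with $(-)^{\eepsilon_i}\circ(-)^{\ggamma}$; since $(-)^{\ggamma}$ is an automorphism by induction and $(-)^{\eepsilon_i}$ by hypothesis, so is $(-)^{\aalpha}$. This leading-coefficient bookkeeping is the step I expect to require the most care.

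For the implication ``each $(-)^{\eepsilon_i}$ automorphism $\Rightarrow$ $R$ is PBW'', I would then argue by transfinite induction along the well-ordered set $(\Nn, \preceq)$ (an admissible ordering on $\Nn$ is a well-order, by Dickson's lemma) that $\field{D}\xx^{\aalpha} \subseteq M$ for every $\aalpha$. The base case $\aalpha = \gordo{0}$ is $a \cdot 1 = 1 \cdot a$. For the step, given $a \in \field{D}$ the lemma provides $b \in \field{D}$ with $b^{\aalpha} = a$; then $\xx^{\aalpha}b = a\xx^{\aalpha} + p_{\aalpha,b}$ with $\exp{p_{\aalpha,b}} \prec \aalpha$, so $a\xx^{\aalpha} = \xx^{\aalpha}b - p_{\aalpha,b}$, where $\xx^{\aalpha}b \in M$ trivially and $p_{\aalpha,b} \in \sum_{\bbeta \prec \aalpha}\field{D}\xx^{\bbeta} \subseteq M$ by the induction hypothesis. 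Summing over $\aalpha$ gives $R = \sum_{\aalpha}\field{D}\xx^{\aalpha} \subseteq M$, hence $M = R$; combined with the automatic right linear independence, $\{\xx^{\aalpha}\}$ is a right $\field{D}$-basis, $R$ is right polynomial, and therefore a PBW ring.

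For the converse, suppose $R$ is a PBW ring, so $\{\xx^{\aalpha}\}$ is a right $\field{D}$-basis; fix $i$ and $0 \neq b \in \field{D}$ and expand $b x_i = \sum_{\bbeta}\xx^{\bbeta}d_{\bbeta}$ in this basis, $d_{\bbeta} \in \field{D}$. Each nonzero summand $\xx^{\bbeta}d_{\bbeta}$ has $\preceq$-exponent exactly $\bbeta$ (again because $(-)^{\bbeta}$ is injective), so the $\preceq$-exponent of the sum is $\max\{\bbeta : d_{\bbeta} \neq 0\}$; since $b x_i = b\xx^{\eepsilon_i}$ has exponent $\eepsilon_i$, this forces $d_{\bbeta} = 0$ whenever $\bbeta \succ \eepsilon_i$. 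Writing $d := d_{\eepsilon_i}$, we get $b x_i = x_i d + w$ with $\exp{w} \prec \eepsilon_i$; comparing with the defining relation $x_i d = q_{id}x_i + p_{id}$ ($\exp{p_{id}} \prec \eepsilon_i$) and using uniqueness of the standard representation gives $q_{id} = b$, that is, $d^{\eepsilon_i} = b$. Hence $(-)^{\eepsilon_i}$ is surjective, and, being injective as above, an automorphism, which completes the argument.
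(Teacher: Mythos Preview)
Your argument is correct. Both your proof and the paper's ultimately rest on the same associativity computation and on \cite[Theorem~1.9]{Bueso/Gomez/Lobillo:2001a}, but you invoke different clauses of that theorem and distribute the work differently. The paper quotes the stronger statement from Theorem~1.9 that $R$ is PBW if and only if every $(-)^{\aalpha}$ is an automorphism, and then only needs to identify $(-)^{\aalpha}$ with $[(-)^{\eepsilon_1}]^{\alpha_1}\circ\cdots\circ[(-)^{\eepsilon_n}]^{\alpha_n}$; by always factoring off the \emph{leftmost} $\eepsilon_i$ (so that $i<j$ and $\xx^{\eepsilon_i}\xx^{\eepsilon_j}=\xx^{\eepsilon_i+\eepsilon_j}$ exactly), their formula has no $q$-conjugation factor. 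You instead quote only the ``right polynomial'' clause of Theorem~1.9 and then prove the equivalence with the automorphism condition by hand: right linear independence via injectivity of $(-)^{\bbeta}$, right spanning via a direct transfinite induction using surjectivity of $(-)^{\aalpha}$. Your route is more self-contained and makes explicit why right spanning is exactly surjectivity of the $(-)^{\aalpha}$, at the cost of carrying the conjugation factor $q_{\eepsilon_i,\ggamma}$ in your composition formula (harmless, since inner automorphisms are automorphisms). If you want the paper's cleaner identity \eqref{epsilongen}, factor $\aalpha=\eepsilon_i+\ggamma$ with $i$ minimal in the support of $\aalpha$, so that $\xx^{\eepsilon_i}\xx^{\ggamma}=\xx^{\aalpha}$ and $q_{\eepsilon_i,\ggamma}=1$.
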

\begin{proof}
For $i < j$, we get from Theorem \ref{LPBW} that, for all $a \in  \field{D}$, 
\begin{gather*}
\xx^{\eepsilon_i + \eepsilon_j}a = \xx^{\eepsilon_i}\xx^{\eepsilon_j}a = \xx^{\eepsilon_i}(a^{\eepsilon_j}\xx^{\eepsilon_j} + p_{\eepsilon_j,a}) = (a^{\eepsilon_j})^{\eepsilon_i} \xx^{\eepsilon_i}\xx^{\eepsilon_j} + p_{\eepsilon_i,a}\xx^{\eepsilon_j} + \xx^{\eepsilon_i}p_{\eepsilon_j, a},	
\end{gather*} 
and that
\[
\xx^{\eepsilon_i + \eepsilon_j}a = a^{\eepsilon_i + \eepsilon_j}\xx^{\eepsilon_i + \eepsilon_j} + p_{{\eepsilon_i + \eepsilon_j},a}= a^{\eepsilon_i + \eepsilon_j}\xx^{\eepsilon_i} \xx^{\eepsilon_j} + p_{{\eepsilon_i + \eepsilon_j},a} \,.
\]
Comparing the leading coefficients of the right hand standard polynomials of both expressions, we deduce that $(a^{\eepsilon_j})^{\eepsilon_i} = a^{\eepsilon_i + \eepsilon_j}$ for all $a \in \field{D}$. That is, $(-)^{\eepsilon_i} \circ (-)^{\eepsilon_j} = (-)^{\eepsilon_i + \eepsilon_j}$ for all $i < j$. A straightforward induction argument on the well ordered set $(\Nn,\preceq)$, always with the help of Theorem \ref{LPBW}, will lead us to
\begin{equation}\label{epsilongen}
(-)^{\aalpha} = [(-)^{\eepsilon_1}]^{\alpha_1} \circ \cdots \circ [(-)^{\eepsilon_n}]^{\alpha_n}
\end{equation}
for all $\aalpha \in \Nn$. By \cite[Theorem 1.9]{Bueso/Gomez/Lobillo:2001a}, $R$ is a PBW  ring  if and only if $(-)^{\aalpha}$ is an automorphism for every $\aalpha \in \Nn$. By \eqref{epsilongen}, this last condition is fulfilled if and only if $(-)^{\eepsilon_j}$ is an automorphism  for  every $j = 1, \dots, n$. 
\end{proof}

\end{remark}

\begin{remark}
By Theorem \ref{LPBW}, the multiplication of the left PBW ring $R$ is completely determined by the ``relations'' $Q$ and $Q'$. In practice, given two polynomials $f, g \in R$, what guarantees that the product $fg$ may be computed in finitely many steps?
The key is that, by Dickson's Lemma, $\preceq$ is a well ordering on $\Nn$ (see, e.g. \cite[Proposition 1.20, Ch. 2]{Bueso/Gomez/Verschoren:2003}). Thus, every strictly decreasing sequence $\aalpha_1 \succ  \aalpha_2 \succ \cdots  $ in $\Nn$ must be finite. Statement (c) of Theorem \ref{LPBW} may be used then to compute the product of two polynomials in finitely many steps. To see this quickly, assume that ``step'' means the application of one of the reduction rules in $Q$ or $Q'$. If $fg$ cannot be computed in finitely many steps, then we choose such a pair $f, g$ with $\exp{f} + \exp{g}$ minimal. A straightforward application of Theorem \ref{LPBW} will lead us to a contradiction. 
\end{remark}

\subsection{Differential and Difference Operator Rings}
An Ore extension $\field{D}[x;\sigma,\delta]$ of a skew field $\field{D}$ is a left PBW ring, where $Q$ is empty and $Q' = \{ xa = \sigma(a)x + \delta(a) : a \in \field{D} \}$. Moreover, it is a PBW ring if and only if $\sigma$ is an automorphism. Thus, in particular, both the differential operator case ($\sigma = id$) and the difference operator case ($\delta = 0$), are instances of (left) PBW rings.  What happens if we iterate the process? That is, under which circumstances is an iterated Ore extension $\field{D}[x_1; \sigma_1, \delta_1]\cdots [x_n; \sigma_n, \delta_n]$ a (left) PBW ring? Let us give a partial answer to a slightly more general question: If $R = \field{D}\{x_1, \dots, x_n; \ Q, \ Q', \ \preceq \}$ is a left PBW ring, under which conditions is an Ore extension $S = R[x; \sigma, \delta]$ of $R$ a left PBW ring?

The first pertinent observation is that the standard monomials 
$$\{ \xx^{\aalpha}x^i : (\aalpha, i) \in \mathbb{N}^{n+1} \}$$ form a basis of $S$ as a left vector space over $\field{D}$, thus $S$ becomes a left polynomial ring over $\field{D}$ in $x_1, \dots, x_n, x$.  Now it is rather natural to define $\exp{f} = (\exp{\operatorname{lc}_x(f)}, \deg_x(f))$, for $f \in S$, where we are considering the admissible order $\preceq_*$ on $\mathbb{N}^{n+1}$ given by $(\aalpha, i) \preceq_* (\bbeta,j)$ if $i < j$,  or $i = j$ and $\aalpha \preceq \bbeta$. A straightforward computation shows that $\exp{fg} = \exp{f} + \exp{g}$ for all $f, g \in S$ if and only if $\exp{\sigma(r)} = \exp{r}$ for all $r \in R$. This last condition, in conjunction with Theorem \ref{LPBW} and Proposition \ref{PBW}, may be used to deduce the following proposition (see \cite[Theorem 3.1, Ch. 2]{Bueso/Gomez/Verschoren:2003}).

\begin{proposition}\label{triangular}
Let $S = R[x; \sigma, \delta]$ be an Ore extension of a left  PBW  ring $R = \field{D}\{x_1, \dots, x_n; \ Q, \ Q', \ \preceq \}$. Keeping the previous notation, we get that $S$ is a left PBW ring in $x_1, \dots, x_n, x$ with respect to $\preceq_*$ if and only if $\sigma (\field{D}) \subseteq \field{D}$ and for every $i = 1, \dots, n$ there exist $0 \neq q_i \in \field{D}$ and $p_i \in R$ such that $\sigma(x_i) = q_i x_i + p_i$ and $\exp{p_i} \prec \eepsilon_i$. Moreover, in such a case, $S$ is a PBW ring if and only if $R$ is a PBW ring and the restriction of $\sigma$ to $\field{D}$ is an automorphism.  
\end{proposition}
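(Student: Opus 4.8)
The plan is to push both assertions onto the exponent criterion of Theorem~\ref{LPBW} and the automorphism criterion of Proposition~\ref{PBW}, using the observation recorded just before the statement that additivity of $\exp$ on $S$ is equivalent to $\exp{\sigma(r)}=\exp{r}$ for all $r\in R$. First I would check that $S$ is indeed a left polynomial ring over $\field{D}$ in $x_1,\dots,x_n,x$: it is free as a left $R$-module on $\{x^i:i\geq 0\}$ and $R$ is free as a left $\field{D}$-space on $\{\xx^{\aalpha}:\aalpha\in\Nn\}$, so $\{\xx^{\aalpha}x^i\}$ is a $\field{D}$-basis of $S$, and $\preceq_*$ is admissible. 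Hence, by Theorem~\ref{LPBW} and Definition~\ref{def:LPBW}, $S$ is a left PBW ring with respect to $\preceq_*$ if and only if $\exp$ is additive on $S$, i.e.\ (by the quoted observation) if and only if $\exp{\sigma(r)}=\exp{r}$ for every $r\in R$. So the first equivalence reduces to showing that this property of $\sigma$ is the same as: $\sigma(\field{D})\subseteq\field{D}$, and for each $i$ one has $\sigma(x_i)=q_ix_i+p_i$ with $0\neq q_i\in\field{D}$ and $\exp{p_i}\prec\eepsilon_i$.

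For the forward implication I would apply the hypothesis to $r=a\in\field{D}\setminus\{0\}$: there $\exp{a}=\gordo{0}$, and since $\gordo{0}$ is the minimum of $(\Nn,\preceq)$, the equality $\exp{\sigma(a)}=\gordo{0}$ forces $\Newton(\sigma(a))=\{\gordo{0}\}$, i.e.\ $\sigma(a)\in\field{D}\setminus\{0\}$; thus $\sigma(\field{D})\subseteq\field{D}$. Applying it to $r=x_i$, where $\exp{x_i}=\eepsilon_i$, and reading off the standard representation of $\sigma(x_i)$ gives $\sigma(x_i)=q_ix_i+p_i$ with $q_i=\lc{\sigma(x_i)}\in\field{D}\setminus\{0\}$ and $\exp{p_i}\prec\eepsilon_i$. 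For the converse I would use that $R$ is left PBW, so $\exp$ is additive on $R$: then $\exp{\sigma(x_i)}=\exp{q_ix_i+p_i}=\eepsilon_i$ because the leading term $q_ix_i$ dominates $p_i$, and therefore $\exp{\sigma(\xx^{\aalpha})}=\exp{\sigma(x_1)^{\alpha_1}\cdots\sigma(x_n)^{\alpha_n}}=\aalpha$, with leading term $d_{\aalpha}\xx^{\aalpha}$ for some $d_{\aalpha}\in\field{D}\setminus\{0\}$. Now write $r=\sum_{\aalpha}c_{\aalpha}\xx^{\aalpha}$ with $\bbeta=\exp{r}$; since $\sigma|_{\field{D}}$ is injective (a ring map out of a division ring) each $\sigma(c_{\aalpha})\neq 0$, and expanding $\sigma(r)=\sum_{\aalpha}\sigma(c_{\aalpha})\sigma(\xx^{\aalpha})$, using additivity of $\exp$ and the fact that left multiplication by a nonzero scalar preserves exponents, one finds that the coefficient of $\xx^{\bbeta}$ in the standard form of $\sigma(r)$ is $\sigma(c_{\bbeta})d_{\bbeta}\neq 0$, while every other standard monomial occurring in $\sigma(r)$ has exponent $\prec\bbeta$; hence $\exp{\sigma(r)}=\bbeta=\exp{r}$. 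This last bookkeeping --- propagating leading terms through $\sigma$ and through a product expansion, all resting on additivity of $\exp$ in $R$ --- is the only delicate point and is what I expect to be the technical core.

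For the ``moreover'', assume $S$ is left PBW; then by the first part $\sigma(\field{D})\subseteq\field{D}$, so $S$ may be written as $\field{D}\{x_1,\dots,x_n,x;\widetilde{Q},\widetilde{Q}',\preceq_*\}$ for suitable relations. By Proposition~\ref{PBW} applied to the $(n+1)$-variable left PBW ring $S$, $S$ is a PBW ring if and only if the endomorphism $(-)^{\eepsilon_j}:\field{D}\to\field{D}$ attached to each of the $n+1$ variables of $S$ is an automorphism. For the variables $x_1,\dots,x_n$ these endomorphisms are exactly those of $R$: the relations $x_ia=q_{ia}x_i+p_{ia}$ and the standard forms of their right-hand sides are inherited verbatim from the subring $R\subseteq S$ (an element of $R$ keeps its standard form in $S$, sitting in $x$-degree $0$, so ``$\exp{p_{ia}}\prec\eepsilon_i$ in $R$'' is the same as the corresponding $\preceq_*$-bound in $S$). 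For the variable $x$, the defining relation $xa=\sigma(a)x+\delta(a)$ of the Ore extension exhibits $\sigma|_{\field{D}}$ as the endomorphism attached to $x$ (the remainder $\delta(a)\in R$ has $x$-degree $0$, hence $\preceq_*$-exponent $\prec\eepsilon_{n+1}$ automatically). Therefore $S$ is a PBW ring if and only if $(-)^{\eepsilon_i}$ is an automorphism for $i=1,\dots,n$ and $\sigma|_{\field{D}}$ is an automorphism; applying Proposition~\ref{PBW} to $R$ itself, the first of these conditions is precisely the requirement that $R$ be a PBW ring, and the claim follows.
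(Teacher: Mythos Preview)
Your proposal is correct and follows exactly the approach the paper sketches: the paper does not give a full proof here but records the key observation that $\exp{fg}=\exp{f}+\exp{g}$ on $S$ is equivalent to $\exp{\sigma(r)}=\exp{r}$ on $R$, and then says that this, together with Theorem~\ref{LPBW} and Proposition~\ref{PBW}, yields the result (referring to \cite[Theorem~3.1, Ch.~2]{Bueso/Gomez/Verschoren:2003} for details). You have carried out precisely this plan, supplying the bookkeeping the paper omits.
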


If $S = R[x;\sigma,\delta]$ is known to be a (left) PBW ring accordingly to Proposition \ref{triangular}, then we have $S = \field{D}\{ x_1, \dots, x_n, x; \ Q_*, \ Q_*', \ \preceq_* \}$, where
\[
Q_* = Q \cup \{ xx_i = q_ix_ix + p_ix + \delta (x_i) ;\ 1 \leqslant i \leqslant  n \}
\]
and
\[
Q_*' = Q' \cup \{ xa = \sigma (a) x + \delta (a) ;\ a \in \field{D} \}\,.
\]

Our answer to the question on iterated Ore extensions of $\field{D}$ is the following consequence of Proposition \ref{triangular}.

\begin{corollary}\label{MasqueOrePoly}
Let $R = \field{D}[x_1; \sigma_1, \delta_1] \cdots [x_n; \sigma_n, \delta_n]$ be an iterated Ore extension of $\field{D}$, and $\preceq_{lex}$ be the lexicographical order on $\Nn$ with $\eepsilon_1 \prec_{lex} \cdots \prec_{lex} \eepsilon_n$. Then $R$ is a left PBW ring with respect to $\preceq_{lex}$ if and only if for every $1 < i <j \leqslant n$ there exist $0 \neq q_{ji} \in \field{D}$ and $f_{ji} \in \field{D}[x_1; \sigma_1, \delta_1] \cdots [x_{i-1}; \sigma_{i-1}, \delta_{i-1}]$  such that
\[
\sigma_j(x_i) = q_{ji}x_i + f_{ji}, \qquad (1  \leqslant  i < j \leqslant n),
\]
and  $\sigma_j (\field{D}) \subseteq \field{D}$ for every $j = 1, \dots, n$.
Moreover, in such a case, $R$ is a PBW ring if and only if $\sigma_j$ is an automorphism of $\field{D}$ for every $j = 1, \dots, n$. 
\end{corollary}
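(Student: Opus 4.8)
The plan is to induct on $n$, using Proposition \ref{triangular} as the step that adjoins one variable. Write $R_0 = \field{D}$ and $R_k = R_{k-1}[x_k;\sigma_k,\delta_k]$ for $1 \leqslant k \leqslant n$, so that $R_n = R$ and each $R_k$ is automatically a left polynomial ring over $\field{D}$ in $x_1,\dots,x_k$. I will prove by induction on $k$ the following refined assertion: $R_k$ is a left PBW ring with respect to the lexicographical order $\preceq_{lex}$ on $\mathbb{N}^k$ (with $\eepsilon_1 \prec_{lex} \cdots \prec_{lex} \eepsilon_k$) if and only if $\sigma_j(\field{D}) \subseteq \field{D}$ for every $j \leqslant k$ and, for every $1 \leqslant i < j \leqslant k$, there exist $0 \neq q_{ji} \in \field{D}$ and $f_{ji} \in R_{i-1}$ with $\sigma_j(x_i) = q_{ji} x_i + f_{ji}$. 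For $k \leqslant 1$ this holds trivially: $R_1 = \field{D}[x_1;\sigma_1,\delta_1]$ is always a left PBW ring, the condition on pairs $i < j$ is vacuous, and $\sigma_1(\field{D}) \subseteq \field{D}$ is built into the definition of the Ore extension $R_1$.

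The inductive step rests on two elementary translations. First, the admissible order $\preceq_*$ on $\mathbb{N}^k$ obtained from $\preceq_{lex}$ on $\mathbb{N}^{k-1}$ as in the paragraph preceding Proposition \ref{triangular} --- namely $(\aalpha,i) \preceq_* (\bbeta,j)$ iff $i < j$, or $i = j$ and $\aalpha \preceq_{lex} \bbeta$ --- is literally $\preceq_{lex}$ on $\mathbb{N}^k$ with $\eepsilon_1 \prec_{lex} \cdots \prec_{lex} \eepsilon_k$, since in this lexicographical order the last coordinate carries the highest priority. Second, for $0 \neq p \in R_{k-1}$ one has $\exp{p} \prec_{lex} \eepsilon_i$ if and only if $p \in R_{i-1}$, that is, $p$ involves only $x_1,\dots,x_{i-1}$: one checks directly that an exponent $\bbeta = (\beta_1,\dots,\beta_{k-1})$ satisfies $\bbeta \prec_{lex} \eepsilon_i$ exactly when $\beta_i = \beta_{i+1} = \cdots = \beta_{k-1} = 0$, and then one takes the maximum over $\Newton(p)$ (the case $p = 0$ being trivial, as $\exp{0} = -\infty$).

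Now fix $k \geqslant 2$ and assume the assertion for $k-1$. If the stated data exist for all $j \leqslant k$, then the part with $j \leqslant k-1$ makes $R_{k-1}$ a left PBW ring with respect to $\preceq_{lex}$ by the induction hypothesis, while the part with $j = k$ says $\sigma_k(\field{D}) \subseteq \field{D}$ and $\sigma_k(x_i) = q_{ki} x_i + f_{ki}$ with $f_{ki} \in R_{i-1}$, hence $\exp{f_{ki}} \prec_{lex} \eepsilon_i$ by the second translation; Proposition \ref{triangular} then shows that $R_k = R_{k-1}[x_k;\sigma_k,\delta_k]$ is a left PBW ring with respect to $\preceq_*$, which is $\preceq_{lex}$ by the first translation. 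Conversely, suppose $R_k$ is left PBW with respect to $\preceq_{lex}$. Since $R_{k-1}$ is the subring of $R_k$ consisting of the polynomials of degree $0$ in $x_k$, products of elements of $R_{k-1}$ agree whether computed in $R_{k-1}$ or in $R_k$, and the exponent in $R_k$ of such an element is obtained from its exponent in $R_{k-1}$ by appending a final coordinate $0$; hence the identity $\exp{fg} = \exp{f} + \exp{g}$, valid in $R_k$, restricts to the same identity in $R_{k-1}$, so (by Theorem \ref{LPBW}) $R_{k-1}$ is left PBW with respect to $\preceq_{lex}$. The induction hypothesis now supplies the data for $j \leqslant k-1$, and the ``only if'' direction of Proposition \ref{triangular} (applicable since $R_{k-1}$ is left PBW) supplies $\sigma_k(\field{D}) \subseteq \field{D}$ together with $\sigma_k(x_i) = q_i x_i + p_i$, $\exp{p_i} \prec_{lex} \eepsilon_i$; by the second translation $p_i \in R_{i-1}$, and we set $q_{ki} := q_i$, $f_{ki} := p_i$. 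The ``Moreover'' clause follows by the same induction from the ``Moreover'' part of Proposition \ref{triangular}: once the triangular conditions are assumed, $R_k$ is a PBW ring if and only if $R_{k-1}$ is a PBW ring and $\sigma_k|_{\field{D}}$ is an automorphism, so, unwinding the recursion (with base case $R_1 = \field{D}[x_1;\sigma_1,\delta_1]$ PBW iff $\sigma_1$ is an automorphism), if and only if $\sigma_j|_{\field{D}}$ is an automorphism for every $j = 1,\dots,k$.

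The only genuinely delicate point --- and the step I would double-check most carefully --- is the bookkeeping in these two translations: confirming that $\preceq_*$ really coincides with $\preceq_{lex}$ one level up, and that the bound $\exp{p} \prec_{lex} \eepsilon_i$ corresponds exactly to ``$p$ does not involve $x_i,\dots,x_{k-1}$'', so that the constraints emerging from the successive applications of Proposition \ref{triangular} assemble precisely into the triangular system $\sigma_j(x_i) = q_{ji}x_i + f_{ji}$, $f_{ji} \in R_{i-1}$, of the statement. Everything else is a routine induction on $n$.
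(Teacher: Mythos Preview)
Your proof is correct and follows exactly the approach the paper intends: the paper presents Corollary \ref{MasqueOrePoly} as an immediate consequence of Proposition \ref{triangular} without writing out a proof, and what you have done is simply spell out the induction on $n$ that applies Proposition \ref{triangular} one variable at a time. Your two ``translations'' --- that the order $\preceq_*$ built in Proposition \ref{triangular} from $\preceq_{lex}$ on $\mathbb{N}^{k-1}$ is again $\preceq_{lex}$ on $\mathbb{N}^{k}$, and that $\exp{p} \prec_{lex} \eepsilon_i$ is equivalent to $p \in R_{i-1}$ --- are exactly the bookkeeping needed, and your handling of the converse direction (restricting the identity $\exp{fg} = \exp{f} + \exp{g}$ from $R_k$ to $R_{k-1}$ to see that $R_{k-1}$ is left PBW before invoking Proposition \ref{triangular}) is clean and correct.
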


The class of rings described in Corollary \ref{MasqueOrePoly} contain that of polynomial Ore algebras, as defined in \cite[Definition 2.1]{Chyzak/Salvy:1998}  (In concrete examples, an iterated but flexible use of Proposition \ref{triangular} would lead to more general orderings than pure lexicographical ones).  It is noteworthy to mention that these rings were explicitly considered as rings of linear operators. A \textsc{Maple} implementation of many algorithms based on polynomial Ore algebras is available (see \cite{Chyzak/Quadrat/Robertz:2007,Cluzeau/Quadrat:2009}).

\begin{corollary}\label{cor:diffop}
Every differential operator ring $\field{D}[x_1, \delta_1] \cdots [x_n,\delta_n]$ is a PBW ring $R = \field{D}\{x_1, \dots, x_n; Q, Q', \preceq_{lex} \}$, where
\[
Q = \{ x_jx_i = x_ix_j + \delta_j(x_i): 1 \leqslant i < j \leqslant n \},
\]
and
\[
Q' = \{x_j a = a x_j + \delta_j (a) : 1 \leqslant j \leqslant n, a \in \field{D} \} \, .
\]
\end{corollary}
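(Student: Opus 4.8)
The plan is to obtain this statement as a direct specialization of Corollary \ref{MasqueOrePoly} to the case where every $\sigma_j$ is the identity. First I would record that a differential operator ring $\field{D}[x_1,\delta_1]\cdots[x_n,\delta_n]$ is, by the conventions fixed in Section \ref{sec:Ore}, precisely the iterated Ore extension $\field{D}[x_1;\sigma_1,\delta_1]\cdots[x_n;\sigma_n,\delta_n]$ with $\sigma_j = \mathrm{id}$ for all $j$. Then I would check the hypotheses of Corollary \ref{MasqueOrePoly}: since each $\sigma_j$ restricts to the identity on $\field{D}$, we trivially have $\sigma_j(\field{D}) \subseteq \field{D}$; and for indices $1 \leqslant i < j \leqslant n$ we have $\sigma_j(x_i) = x_i$, which has the required form $q_{ji} x_i + f_{ji}$ with $q_{ji} = 1 \in \field{D}\setminus\{0\}$ and $f_{ji} = 0 \in \field{D}[x_1;\sigma_1,\delta_1]\cdots[x_{i-1};\sigma_{i-1},\delta_{i-1}]$. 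The condition $\exp{f_{ji}} \prec \eepsilon_i$ then holds because $\exp{0} = -\infty$ and $-\infty$ is taken to precede every element of $\Nn$. Hence Corollary \ref{MasqueOrePoly} applies and $R$ is a left PBW ring with respect to $\preceq_{lex}$; moreover, the identity map is an automorphism of $\field{D}$, so the ``moreover'' clause of that corollary yields that $R$ is in fact a PBW ring.

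It remains to identify the relation sets $Q$ and $Q'$. For this I would unwind the explicit description of $Q_*$ and $Q_*'$ recorded just after Proposition \ref{triangular}, applied iteratively along the tower. At the step adjoining $x_j$ (with $\sigma = \mathrm{id}$ and, as observed, $q_i = 1$, $p_i = 0$), the newly contributed commutation rules are $x_j x_i = x_i x_j + \delta_j(x_i)$ for $i < j$ and $x_j a = a x_j + \delta_j(a)$ for $a \in \field{D}$. Accumulating over $j = 1, \dots, n$ gives exactly
\[
Q = \{ x_jx_i = x_ix_j + \delta_j(x_i): 1 \leqslant i < j \leqslant n \}, \qquad Q' = \{ x_j a = a x_j + \delta_j (a) : 1 \leqslant j \leqslant n,\ a \in \field{D} \},
\]
as claimed.

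There is essentially no hard step: the result is a clean corollary of the general iterated-extension criterion, and the only points requiring (routine) care are verifying that the choice $f_{ji} = 0$ genuinely lies in the prescribed subring and that the zero polynomial satisfies the required strict inequality on exponents --- both immediate from the conventions already in force. The mildest subtlety is purely bookkeeping: confirming that the lexicographic order in Corollary \ref{MasqueOrePoly} is the one implicit in the statement, which it is by construction.
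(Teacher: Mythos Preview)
Your proposal is correct and follows exactly the approach implicit in the paper: the corollary is stated without proof immediately after Corollary~\ref{MasqueOrePoly}, and is meant to be read as the specialization $\sigma_j = \mathrm{id}$ of that result, with the relation sets read off from the description following Proposition~\ref{triangular}. One very minor remark: the exponent condition $\exp{f_{ji}} \prec \eepsilon_i$ you mention is the formulation from Proposition~\ref{triangular}; in Corollary~\ref{MasqueOrePoly} this has been rephrased as the membership $f_{ji} \in \field{D}[x_1;\sigma_1,\delta_1]\cdots[x_{i-1};\sigma_{i-1},\delta_{i-1}]$, but of course $f_{ji}=0$ satisfies both trivially.
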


We stress that $\delta_j(a)$ needs not to belong to $\field{D}$ for $j >1$, so general differential operator rings are not covered by the notion of a solvable polynomial ring from \cite{Kredel:1993}.  

We have seen in Proposition \ref{triangular} that an Ore extension $R[x;\sigma,\delta]$ of a left PBW ring $R$ is a PBW ring with the obvious extension $\preceq_*$ of the ordering $\preceq$ of $R$ if and only if $\sigma$ satisfies suitable conditions. This leads naturally to the following general problem.

\begin{problem}\label{Rsigma}
Assume a ring $R$ having a good theory of left Gr\"obner bases, and let $S = R[x;\sigma,\delta]$ be an Ore extension. Under which conditions $S$ has a good theory of left Gr\"obner bases?
\end{problem}

For instance, if we require finiteness on left Gr\"obner bases to have a good theory, then $R$ has to be assumed to be left noetherian. In such a case, $S$ will be left noetherian whenever $\sigma$ is close to be an automorphism. For instance, if $\sigma : \field{k}[y] \to \field{k}[y]$ sends $f(y)$ to $f(y^2)$ ($\field{k}$ is a commutative field), then $\field{k}[y][x,\sigma]$ is not left nor right noetherian (see \cite[Example 2.11.(iii)]{McConnell/Robson:1988}). Even assuming that $\sigma$ is an automorphism, and $R$ a PBW ring, a general answer to Problem \ref{Rsigma}, beyond Proposition \ref{triangular}, seems to be not trivial.

\section{Algorithms for Modules over a Left PBW Ring}\label{sec:Buchberger}

Traditionally, the expositions of the theory of Gr\"obner bases for (commutative or not) polynomial rings deal first with (left) ideals and then with submodules of free  (left)  modules of finite rank. Since both developments are parallel, the latter refers continuously to the analogy with the first one. We prefer here  to  present a resume of the theory at the module level, which, apart of saving repetitions, allows the use of Schreyer's method \cite{Schreyer:1980} to simplify the proof of  Buchberger's theorem. V. Levandovskyy \cite{Levandovskyy:2005} used a similar point of view when dealing with $G$-algebras (that is, PBW algebras in the sense of Definition \ref{def:PBWalg}).

\subsection{The Division Algorithm and Gr\"obner Bases for Free Modules of Finite Rank}
When dealing with free left modules of rank $m$ over a left PBW ring, the standard monomials will have exponents indexed by $\Nnm = \Nn \times \{ 1, \dots, m \}$. There is an action of the monoid $\Nn$ on the set $\Nnm$, denoted by the symbol $+$, and defined as $(\aalpha, i) + \bbeta = (\aalpha + \bbeta, i)$ for all $\aalpha \in \Nn, (\bbeta, i) \in \Nnm$. A subset $E$ of $\Nnm$ is \emph{stable} under this action if $E + \Nn = E$, that is, if for every $ (\aalpha, i) \in E, \bbeta \in \Nn$, $(\aalpha + \bbeta, i) \in E$. Stable subsets of $\Nn$ (i.e., $m = 1$) are called \emph{monoideals}. As a consequence of Dikson's Lemma (see \cite[Lemma 1.10, Ch. 2]{Bueso/Gomez/Verschoren:2003}), every non empty stable subset $E \subseteq \Nnm$ is represented as
\begin{equation}\label{FiniteGen}
E = B + \Nn
\end{equation}
for a finite subset $B \subseteq E$, called a \emph{set of generators} of $E$. If we take $B$ minimal with respect to inclusion, then it is unique, and it will be called the \emph{basis} of $E$. 

Now, let $R = \field{D}\{x_1, \dots, x_n; \ Q, Q', \preceq \}$ be a left PBW ring, and $R^m$ a free left $R$-module with basis $\{ \gordo{e}_1, \cdots, \gordo{e}_m \}$. Since $\mathcal{B} = \{ \gordo{x}^{\gordo{\alpha}}; \, \gordo{\alpha} \in\Nn \}$ is a 
basis of $R$ as a left $\field{D}$-vector space, it follows that $\mathcal{B}_{m} = \{\gordo{x}^{\gordo{\alpha}}\gordo{e}_i; \, (\gordo{\alpha},i)
\in\Nnm \}$ is a basis of $R^m$ as a left $\field{D}$-vector space. Therefore, every element $\gordo{f} \in R^{m}$
has a unique \emph{standard representation}
\[
\gordo{f} = \sum_{(\gordo{\alpha},i) \in\Nnm} c_{(\gordo {\alpha},i)} \gordo{x}^{\gordo{\alpha}}\gordo{e}_i \, ,
\]
where $c_{(\gordo {\alpha},i)} \in \field{D}$ is non zero for finitely many $(\aalpha, i) \in \Nnm$. 
We define the \emph{Newton diagram} of $\gordo{f}$ to be
\[
\Newton(\gordo{f}) = \{ (\gordo{\alpha},i) \in\Nnm; \, c_{(\gordo{\alpha},i)} \neq0 \} \,.
\]

 The admissible ordering $\preceq$ on $\Nn$ may be extended to $\Nnm$ in different ways. For instance, we can require $(\aalpha, i) \preceq (\bbeta,j)$ if $\aalpha \prec \bbeta$, or $\aalpha = \bbeta$ and $i \leqslant j$ (this is the TOP ordering, from ``term over position''. There is also de POT order, defined in an obvious way).  
We thus may define the \emph{exponent} of $\gordo{f}$ to be
\[
\exp{\gordo{f}} = \max_{\preceq}{\Newton(\gordo{f})} \, .
\]
If $(\gordo{\alpha},i)$ is the exponent of $\gordo{f}$ then we call
$\gordo{\alpha}$ the \emph{scalar exponent}\index{scalar exponent}
of $\gordo{f}$, denoted by $\sexp{\gordo{f}}$ and we call $i$ the
\emph{level}\index{level} of $\gordo{f}$ denoted by
$\level(\gordo{f})$. Therefore, 
\[
\gordo{f} = c_{\exp{\gordo{f}}}\gordo{x}^{\sexp{\gordo{f}}}\gordo{e}_{\level(\gordo{f})} + \sum_{(\gordo{\beta},j)
\prec \exp{\gordo{f}}}c_{(\gordo{\beta},j)} \gordo{x}^{\gordo{\beta}}\gordo{e}_j \,.
\]
We will refer to $\lc{\gordo{f}} =c_{\exp{\gordo{f}}}$ as the \emph{leading coefficient} of $\gordo{f}$. The following proposition follows from Theorem \ref{LPBW}.

\begin{proposition}\label{aditivo}
For all $\gordo{f} \in R^m$ and $h \in R$ we have:
\begin{enumerate}[(1)]
\item $\exp{h\gordo{f}} = \exp{h} + \exp{\gordo{f}}$;
\item $\lc{h\gordo{f}} = \lc{h}\lc{{\gordo{f}}}^{\exp{h}}q_{\exp{h},\sexp{\gordo{f}}}$.
\end{enumerate}
\end{proposition}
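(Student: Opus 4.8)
The plan is to reduce Proposition \ref{aditivo} to the already-established additivity of exponents in $R$ (Theorem \ref{LPBW}(a)) and to the ``level is preserved by left multiplication'' phenomenon, which is built into the definition of the action of $\Nn$ on $\Nnm$. Write $\gordo{f}$ in standard form as $\gordo{f} = \sum_j \gordo{f}_j \gordo{e}_j$ with $\gordo{f}_j \in R$, so that $h\gordo{f} = \sum_j (h\gordo{f}_j)\gordo{e}_j$. The key observation is that, since the TOP (or POT) ordering on $\Nnm$ restricts to $\preceq$ on each ``slice'' $\{(\aalpha,j) : \aalpha \in \Nn\}$ and left multiplication by $h$ sends the $j$-th component $\gordo{f}_j$ to $h\gordo{f}_j$ without mixing levels, we have $\Newton(h\gordo{f}) \subseteq \bigcup_j (\Newton(h\gordo{f}_j) \times \{j\})$, with equality on the components where $h\gordo{f}_j \neq 0$.

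First I would pin down the exponent. Let $(\aalpha,i) = \exp{\gordo{f}}$, so that $\level(\gordo{f}) = i$, $\sexp{\gordo{f}} = \aalpha$, and $\exp{\gordo{f}_i} = \aalpha$ in $R$, while for $j \neq i$ every element of $\Newton(\gordo{f}_j)\times\{j\}$ is $\prec (\aalpha,i)$. Applying Theorem \ref{LPBW}(a) componentwise, $\exp{h\gordo{f}_i} = \exp{h} + \aalpha$, and for $j\neq i$ one checks (using admissibility, i.e. compatibility of $\preceq$ with the $\Nn$-action, together with the way $\preceq$ is extended to $\Nnm$) that every exponent occurring in $(h\gordo{f}_j)\gordo{e}_j$ is still $\prec (\exp{h}+\aalpha, i)$; since $R$ is a domain (leading coefficients multiply to something nonzero by Proposition \ref{LPBW}, or simply because $\lc{h}\neq 0$ and $\lc{\gordo{f}_i}^{\exp h}q_{\exp h,\aalpha}\neq 0$ as $\field{D}$ is a skew field and the $q$'s are nonzero), the term of exponent $(\exp{h}+\aalpha,i)$ does not cancel. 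Hence $\exp{h\gordo{f}} = (\exp{h}+\aalpha, i) = \exp{h} + \exp{\gordo{f}}$, which is (1).

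For (2), I would extract the coefficient of $\gordo{x}^{\exp{h}+\aalpha}\gordo{e}_i$ in $h\gordo{f}$. By the previous paragraph this coefficient equals $\lc{h\gordo{f}_i}$, the leading coefficient of the product $h\gordo{f}_i$ computed in $R$. Now I invoke the explicit formula for the leading coefficient of a product in a left PBW ring: by \cite[Corollary 2.10, Ch. 2]{Bueso/Gomez/Verschoren:2003} (the same reference already used in the proof of Corollary \ref{cor:estrellapunto}), $\lc{h\gordo{f}_i} = \lc{h}\,\lc{\gordo{f}_i}^{\exp{h}}\,q_{\exp{h},\,\exp{\gordo{f}_i}}$. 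Substituting $\lc{\gordo{f}_i} = \lc{\gordo{f}}$ and $\exp{\gordo{f}_i} = \sexp{\gordo{f}}$ gives exactly $\lc{h\gordo{f}} = \lc{h}\lc{\gordo{f}}^{\exp{h}}q_{\exp{h},\sexp{\gordo{f}}}$, which is (2).

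The only mild obstacle is bookkeeping: one must be careful that passing from $R$ to $R^m$ does not disturb the ordering, i.e. that the chosen extension of $\preceq$ to $\Nnm$ is itself admissible for the $\Nn$-action (monotone and translation-compatible within each level), so that the domination argument ``the other levels stay strictly below'' survives multiplication by $h$. This is immediate from the definitions of the TOP and POT orders, so the proposition follows essentially by reducing to the single-variable-module case $m=1$, which is Theorem \ref{LPBW} together with \cite[Corollary 2.10, Ch. 2]{Bueso/Gomez/Verschoren:2003}.
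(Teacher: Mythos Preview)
Your proof is correct and follows exactly the route the paper intends: the paper states only that the proposition ``follows from Theorem \ref{LPBW}'' without further detail, and your argument is precisely the natural unpacking of that claim---decompose $\gordo{f}$ by levels, apply the additivity of exponents and the leading-coefficient formula from \cite[Corollary 2.10, Ch. 2]{Bueso/Gomez/Verschoren:2003} in $R$ componentwise, and use the compatibility of the extended order on $\Nnm$ with the $\Nn$-action to ensure the dominant term survives. Your closing remark about needing the extension of $\preceq$ to $\Nnm$ to be translation-compatible is well taken; the paper leaves this implicit, but it is indeed the only point that requires any care.
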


It follows from Proposition \ref{aditivo} that, for any (nonzero) left $R$-submodule $L$ of $ R^m$, the set $\Exp(L) = \{\exp{\gordo{f}} : 0 \neq \gordo{f} \in L \}$ is a stable subset of $\Nnm$. Thus, applying \eqref{FiniteGen} to $E = \Exp(L)$, we get immediately:

\begin{proposition}\label{Groebnerexiste}
For any left  $R$-submodule $L$ of $R^m$ there exists  a finite subset $G = \{\gordo{g}_1, \dots, \gordo{g}_t \} \subseteq L$ such that $\Exp (L) = \bigcup_{i=1}^{t} (\exp{\gordo{g}_i} + \Nn)$. 
\end{proposition}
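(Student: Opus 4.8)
The plan is to observe that the statement is essentially immediate from two facts already established: the additivity of exponents (Proposition \ref{aditivo}), which forces $\Exp(L)$ to be a stable subset of $\Nnm$, and the finite generation of stable subsets recorded in \eqref{FiniteGen} (a consequence of Dickson's Lemma, valid because $\preceq$ is admissible). So the proof will consist of assembling these, with the only genuine verification being stability of $\Exp(L)$.

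First I would dispose of the trivial case $L = \{0\}$ (take $G = \emptyset$), and then assume $L \neq \{0\}$, so that $\Exp(L)$ is non-empty. Next I would check that $\Exp(L)$ is stable: given $(\gordo{\alpha},i) \in \Exp(L)$, choose $\gordo{f} \in L$ with $\exp{\gordo{f}} = (\gordo{\alpha},i)$; for any $\gordo{\beta} \in \Nn$ the element $\gordo{x}^{\gordo{\beta}}\gordo{f}$ again lies in $L$ (as $L$ is a submodule), and Proposition \ref{aditivo}(1) gives $\exp{\gordo{x}^{\gordo{\beta}}\gordo{f}} = \gordo{\beta} + (\gordo{\alpha},i) = (\gordo{\alpha}+\gordo{\beta},i)$, so $(\gordo{\alpha}+\gordo{\beta},i) \in \Exp(L)$. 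Hence $\Exp(L) + \Nn = \Exp(L)$.

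Then I would apply \eqref{FiniteGen} to $E = \Exp(L)$: there is a finite subset $B = \{(\gordo{\alpha}_1,i_1), \dots, (\gordo{\alpha}_t,i_t)\} \subseteq \Exp(L)$ with $\Exp(L) = B + \Nn = \bigcup_{k=1}^{t}\bigl((\gordo{\alpha}_k,i_k) + \Nn\bigr)$. Finally, by the very definition of $\Exp(L)$, for each $k$ there exists $\gordo{g}_k \in L$ with $\exp{\gordo{g}_k} = (\gordo{\alpha}_k,i_k)$; setting $G = \{\gordo{g}_1,\dots,\gordo{g}_t\}$ we get $\bigcup_{k=1}^{t}(\exp{\gordo{g}_k} + \Nn) = \bigcup_{k=1}^{t}\bigl((\gordo{\alpha}_k,i_k)+\Nn\bigr) = \Exp(L)$, which is the claim.

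There is no real obstacle here: beyond routine bookkeeping, the only substantive inputs are Proposition \ref{aditivo} (which is where the left PBW hypothesis enters, guaranteeing additivity of exponents and hence stability of $\Exp(L)$) and the finiteness in \eqref{FiniteGen} (Dickson's Lemma). The single step that deserves an explicit line is the stability check, and even that is a one-line consequence of the product rule for exponents together with $L$ being closed under left multiplication.
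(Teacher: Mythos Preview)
Your proof is correct and follows exactly the same route as the paper: the paper's argument is contained in the sentence immediately preceding the proposition, namely that Proposition \ref{aditivo} makes $\Exp(L)$ stable and then \eqref{FiniteGen} applies. You have simply written out the details (the stability verification and the choice of the $\gordo{g}_k$) that the paper leaves implicit with the word ``immediately''.
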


The set $G$ in Proposition \ref{Groebnerexiste} is said to be a \emph{Gr\"obner basis of $L$} (with respect to $\preceq$). Any Gr\"obner basis of $L$ turns out to be  a  (finite) set of generators of $L$ as a left $R$-module. To see this, the Division Algorithm for free left $R$-modules (Algorithm \ref{alg:5.1}) is needed. With Proposition \ref{aditivo}  at hand, the proof of the correctness of Algorithm \ref{alg:5.1} is straightforward. 

\begin{algorithm}\label{DivisionAlgo}
\caption{Division Algorithm on a free module over a left PBW ring $R = \field{D}\{x_1, \dots,x_n; Q, Q', \preceq \}$ } \label{alg:5.1}
\begin{algorithmic}
\REQUIRE $\gordo{f},\gordo{f}_1,\ldots,\gordo{f}_s\in R^m$ with $\gordo{f}_i\neq 0$ ($1\leqslant i\leqslant s$) \ENSURE
$h_1,\ldots,h_s \in R,\gordo{r} \in R^m$ such that $\gordo{f}= h_1\gordo{f}_1+\cdots+h_s\gordo{f}_s+\gordo{r}$ \\ and $\gordo{r}=\gordo{0}$ or
$\Newton(\gordo{r}) \cap \bigcup_{i=1}^s(\exp{\gordo{f}_i}+\Nn)=\emptyset$ and \\
$\max \{\exp{h_1}+\exp{\gordo{f}_1},\ldots,\exp{h_s}+\exp{\gordo{f}_s},
\exp{\gordo{r}}\}=\exp{\gordo{f}}$. 
\INIT
$h_1:=0,\ldots,h_s:=0,\gordo{r}:=\gordo{0}, \gordo{g}:=\gordo{f}$
\WHILE{$\gordo{g}\neq \gordo{0}$} \IF{
$\exp{\gordo{g}} \in \exp{\gordo{f}_i} + \Nn$ for some $i =1, \dots, s$} 
\STATE
$a_i:=\lc{\gordo{g}}(\lc{f_i}^{\sexp{\gordo{g}}-\sexp{\gordo{f}_i}}q_{\sexp{\gordo{g}}-\sexp{\gordo{f}_i}),
\sexp{\gordo{f}_i}})^{-1}$ \STATE
$h_i:=h_i+a_i\gordo{x}^{\sexp{\gordo{g}}-\sexp{\gordo{f}_i}}$ \STATE
$\gordo{g}:=\gordo{g}-a_i\gordo{x}^{\sexp{\gordo{g}}-\sexp{\gordo{f}_i}}\gordo{f}_i$
\ELSE \STATE $\gordo{r}:=\gordo{r}+\lm(\gordo{g})$ \STATE
$\gordo{g}:=\gordo{g}-\lm(\gordo{g})$ \ENDIF \ENDWHILE
\end{algorithmic}
\end{algorithm}

\begin{definition}\label{A.3.5.10}
The element $\gordo{r}$ obtained in Algorithm \ref{alg:5.1} is said
to be a \emph{remainder} of the division of $\gordo{f}$ by the set
$F = \{\gordo{f}_{1},\dots,\gordo{f}_{t}\}$. We will denote by
$\lres{\gordo{f}}{F}$ any remainder, that is, any  $\gordo{r} \in R^m$ satisfying the properties of the output of Algorithm  \ref{alg:5.1}.
\end{definition}
%
\begin{example}\label{A.3.5.9}
Consider $\gordo{f}_1=(x,y)$, $\gordo{f}_2=(y,x)\in R^2$. We use the
lex ordering on $R=\BbbC\{x,y;\ yx=xy+1,\ \preceq_{lex}\}$ and TOP
with $\gordo{e}_{1}\prec\gordo{e}_{2}$ in $R^2$. Let $F =\{
\gordo{f}_{1}, \gordo{f}_{2}\}$ and $\gordo{f} = ( x^{2} + y^{2},1)$.
Then $\lres{\gordo{f}}{F}=(2 x^{2},0)$ since
\[
\gordo{f} = - x\gordo{f}_{1} + y\gordo{f}_{2}+(2 x^{2},0)\,.
\]
\end{example}
%
As a consequence of the Division Algorithm, we get the following characterization of Gr\"obner bases,  which also solves the ``membership problem''. 

\begin{theorem}\label{Groebnerchar}
Let $G = \{\gordo{g}_1, \dots, \gordo{g}_t \}$ be a subset of a left $R$-submodule $L$ of $R^m$. The following statements are equivalent:
\begin{enumerate}[(a)]
\item $G$ is a Gr\"obner basis of $L$;
\item if $\gordo{f} \in L$, then $\lres{\gordo{f}}{G} = 0$;
\item if $\gordo{f} \in L$, then there exist $h_1, \dots, h_t \in R$ such that $f = \sum_{i=1}^t h_i\gordo{g}_i$ with 
$$\exp{\gordo{f}} = \max_{\preceq} \{ \exp{h_i} + \exp{\gordo{g}_i} : 1 \leqslant i \leqslant t \} \, .$$
\end{enumerate}
\end{theorem}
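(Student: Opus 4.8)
The plan is to prove the cycle of implications (a) $\Rightarrow$ (c) $\Rightarrow$ (b) $\Rightarrow$ (a). The second implication is essentially trivial: if $\gordo{f} \in L$ and $\gordo{f} = \sum_i h_i\gordo{g}_i$ with the stated equality of exponents, then in Algorithm \ref{alg:5.1} applied to $\gordo{f}$ and $F = G$ the remainder produced satisfies $\gordo{r} = \gordo{f} - \sum_i h_i' \gordo{g}_i$ for the particular output $h_i'$, but more to the point: a remainder has the property $\Newton(\gordo{r}) \cap \bigcup_i(\exp{\gordo{g}_i}+\Nn) = \emptyset$, and since $\gordo{r} = \gordo{f} - \sum_i h_i'\gordo{g}_i \in L$, if $\gordo{r} \neq \gordo{0}$ then $\exp{\gordo{r}} \in \Exp(L) = \bigcup_i (\exp{\gordo{g}_i} + \Nn)$ once (a) is known --- so in fact I should be a little careful about the logical order. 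Cleaner: prove (a) $\Rightarrow$ (b) directly, then (b) $\Rightarrow$ (c), then (c) $\Rightarrow$ (a).

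For (a) $\Rightarrow$ (b): let $\gordo{f} \in L$ and let $\gordo{r} = \lres{\gordo{f}}{G}$. By the specification of Algorithm \ref{alg:5.1}, $\gordo{r} = \gordo{f} - \sum_i h_i\gordo{g}_i$ for suitable $h_i \in R$, so $\gordo{r} \in L$; and either $\gordo{r} = \gordo{0}$ or $\Newton(\gordo{r}) \cap \bigcup_i(\exp{\gordo{g}_i}+\Nn) = \emptyset$. But if $\gordo{r} \neq \gordo{0}$ then $\exp{\gordo{r}} \in \Exp(L)$, and since $G$ is a Gr\"obner basis, $\Exp(L) = \bigcup_i(\exp{\gordo{g}_i}+\Nn)$ by Proposition \ref{Groebnerexiste}, so $\exp{\gordo{r}} \in \bigcup_i(\exp{\gordo{g}_i}+\Nn)$, contradicting the disjointness. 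Hence $\gordo{r} = \gordo{0}$. For (b) $\Rightarrow$ (c): given $\gordo{f} \in L$, run Algorithm \ref{alg:5.1} to get $\gordo{f} = \sum_i h_i\gordo{g}_i + \gordo{r}$ with the output guarantees; by (b) we have $\gordo{r} = \gordo{0}$, so $\gordo{f} = \sum_i h_i\gordo{g}_i$, and the third output condition of the algorithm states exactly $\max\{\exp{h_i}+\exp{\gordo{g}_i}, \exp{\gordo{r}}\} = \exp{\gordo{f}}$, which with $\gordo{r}=\gordo{0}$ (so $\exp{\gordo{r}} = -\infty$) gives $\max_i\{\exp{h_i}+\exp{\gordo{g}_i}\} = \exp{\gordo{f}}$, as required.

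For the remaining implication (c) $\Rightarrow$ (a): we must show $\Exp(L) \subseteq \bigcup_i(\exp{\gordo{g}_i}+\Nn)$ (the reverse inclusion is automatic since each $\gordo{g}_i \in L$). Take $\gordo{0} \neq \gordo{f} \in L$; by (c), $\gordo{f} = \sum_i h_i\gordo{g}_i$ with $\exp{\gordo{f}} = \max_{\preceq}\{\exp{h_i}+\exp{\gordo{g}_i} : 1 \leqslant i \leqslant t\}$. Pick an index $i_0$ attaining the maximum with $h_{i_0} \neq 0$; then by Proposition \ref{aditivo}(1), $\exp{\gordo{f}} = \exp{h_{i_0}} + \exp{\gordo{g}_{i_0}} = \exp{h_{i_0}} + \exp{\gordo{g}_{i_0}}$, and writing $\exp{h_{i_0}} = \bbeta \in \Nn$ and $\exp{\gordo{g}_{i_0}} = (\ggamma, j)$ we get $\exp{\gordo{f}} = (\bbeta + \ggamma, j) \in \exp{\gordo{g}_{i_0}} + \Nn$. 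Hence $\exp{\gordo{f}} \in \bigcup_i(\exp{\gordo{g}_i}+\Nn)$, proving $G$ is a Gr\"obner basis.

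I expect no serious obstacle here: the content has been front-loaded into Proposition \ref{aditivo} (additivity of exponents under left multiplication, which rests on Theorem \ref{LPBW}) and into the correctness specification of the Division Algorithm \ref{alg:5.1}. The only mild subtlety is making sure the output conditions of Algorithm \ref{alg:5.1} are invoked correctly --- in particular that the ``$\max$'' condition is strict enough to force $\exp{\gordo{r}} \preceq \exp{\gordo{f}}$ and that the partial quotients respect $\exp{h_i}+\exp{\gordo{g}_i} \preceq \exp{\gordo{f}}$ --- but these are exactly the stated guarantees, so the argument is a matter of assembling them in the right order rather than proving anything new.
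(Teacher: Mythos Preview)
Your proposal is correct and is precisely the standard argument the paper has in mind: the paper does not spell out a proof of Theorem~\ref{Groebnerchar}, stating it simply ``as a consequence of the Division Algorithm,'' and your cycle (a)$\Rightarrow$(b)$\Rightarrow$(c)$\Rightarrow$(a) using the output guarantees of Algorithm~\ref{alg:5.1} and Proposition~\ref{aditivo} is exactly how that consequence is unpacked. The only cosmetic remark is that your opening paragraph hesitates about the order of implications before settling on the clean one; you could delete that preamble without loss.
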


 Proposition \ref{Groebnerexiste} and  Theorem \ref{Groebnerchar} have the following relevant  consequence.

\begin{corollary}
Any Gr\"obner basis of $L$ generates $L$ as a left $R$-module. Therefore, every finitely generated left $R$-module is finitely presented, that is, $R$ is a left noetherian ring.  
\end{corollary}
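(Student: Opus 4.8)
The plan is to read off the first assertion directly from Theorem \ref{Groebnerchar} together with the Division Algorithm, and then to bootstrap it, via Proposition \ref{Groebnerexiste}, into the noetherian statement.

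First I would show that a Gr\"obner basis $G = \{\gordo{g}_1, \dots, \gordo{g}_t\}$ of $L$ generates $L$ as a left $R$-module. Given $\gordo{f} \in L$, the equivalence (a)$\Leftrightarrow$(b) in Theorem \ref{Groebnerchar} yields $\lres{\gordo{f}}{G} = \gordo{0}$, and by the output specification of Algorithm \ref{alg:5.1} this means that $\gordo{f} = h_1\gordo{g}_1 + \cdots + h_t\gordo{g}_t$ for suitable $h_1, \dots, h_t \in R$. Hence $L \subseteq R\gordo{g}_1 + \cdots + R\gordo{g}_t$, while the reverse inclusion is immediate since each $\gordo{g}_i \in L$. (Alternatively, the equivalence (a)$\Leftrightarrow$(c) gives the same conclusion at once.)

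Next, to prove that every finitely generated left $R$-module $M$ is finitely presented, I would fix a finite generating set $m_1, \dots, m_t$ of $M$ and the associated surjective homomorphism $\varphi : \mathbf{F}_t \to M$, $\varphi(\sum_i r_i \gordo{e}_i) = \sum_i r_i m_i$. The kernel $\ker\varphi$ is a left $R$-submodule of the free module $\mathbf{F}_t = R^t$ of finite rank, so by Proposition \ref{Groebnerexiste} it admits a finite Gr\"obner basis $G$, and by the previous paragraph $G$ generates $\ker\varphi$ as a left $R$-module. Thus $\ker\varphi$ is finitely generated, so the presentation $\mathbf{F}_s \to \mathbf{F}_t \to M \to 0$ assembled from $G$ is finite in the sense of \eqref{finpres}. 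Since this is precisely the definition of ``$R$ is left noetherian'' recorded just after \eqref{finpres}, the proof is complete; one could equally well specialize to $m = 1$ in Proposition \ref{Groebnerexiste} to see directly that every left ideal of $R$ is finitely generated.

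There is essentially no serious obstacle here: all the genuine content has already been absorbed into Dickson's Lemma (underlying Proposition \ref{Groebnerexiste}), the additivity of exponents coming from Theorem \ref{LPBW} and Proposition \ref{aditivo}, and the correctness of the Division Algorithm behind Theorem \ref{Groebnerchar}. The only point I would state carefully is that Proposition \ref{Groebnerexiste} applies to submodules of $R^m$ for \emph{every} $m$, in particular to the kernel of a presentation map $\mathbf{F}_t \to M$, so that no circularity between ``finitely generated'' and ``finitely presented'' can creep in.
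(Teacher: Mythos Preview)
Your proof is correct and follows exactly the approach the paper intends: the paper does not spell out a proof of this corollary but simply records it as a consequence of Proposition \ref{Groebnerexiste} and Theorem \ref{Groebnerchar}, which are precisely the ingredients you invoke. Your write-up faithfully unpacks those two inputs, and your care about Proposition \ref{Groebnerexiste} applying to submodules of $R^m$ for arbitrary $m$ is well placed.
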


\subsection{Buchberger's Theorem and Syzygies}\label{Buchberger}

\subsubsection{Buchberger's Theorem.}
Given $G = \{ \gordo{g}_1, \dots, \gordo{g}_t \} \subseteq R^m$, let us denote by ${}_R\langle G \rangle$ the left $R$-submodule of $R^m$ generated by $G$.   Buchberger's Theorem gives a criterion to check whether $G$ is a Gr\"obner basis of ${}_R\langle G \rangle$. Let us assume that $\gordo{g}_i$ is monic for $i = 1, \dots, t$. For $i \neq j$ such that $\level(\gordo{g}_i) = \level(\gordo{g}_j)$, define the monomial in $R$
\[
r_{ij} = q^{-1}_{\aalpha_i \vee \aalpha_j-\aalpha_i, \aalpha_i}\xx^{\aalpha_i \vee \aalpha_i-\aalpha_i},
\]
where $\aalpha_i = \sexp{\gordo{g}_i}$, and
\[
SP (\gordo{g}_i, \gordo{g}_j) = r_{ij} \gordo{g}_i - r_{ji}\gordo{g}_j \, .
\]

\begin{theorem}[Buchberger]\label{Buch}
The set $G$ is a Gr\"obner basis of ${}_R\langle G \rangle$ if and only if $\lres{SP (\gordo{g}_i, \gordo{g}_j)}{G} = 0$ for all $1 \leqslant i < j \leqslant t$ with $\level (\gordo{g}_i) = \level (\gordo{g}_j)$.
\end{theorem}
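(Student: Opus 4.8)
The plan is to prove both directions, the forward one being essentially trivial and the converse being the substance of the theorem. For the ``only if'' direction, observe that each $\SP(\gordo{g}_i,\gordo{g}_j)$ lies in ${}_R\langle G\rangle$ by construction, so if $G$ is a Gr\"obner basis then Theorem~\ref{Groebnerchar} (equivalence (a)$\Leftrightarrow$(b)) immediately gives $\lres{\SP(\gordo{g}_i,\gordo{g}_j)}{G}=0$.

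For the ``if'' direction, I would use Schreyer's method, as announced in the text. Assume every $\SP(\gordo{g}_i,\gordo{g}_j)$ has zero remainder; I want to show $\Exp({}_R\langle G\rangle)=\bigcup_i(\exp{\gordo{g}_i}+\Nn)$. Take any nonzero $\gordo{f}\in{}_R\langle G\rangle$ and write $\gordo{f}=\sum_{i=1}^t h_i\gordo{g}_i$; by Proposition~\ref{aditivo} each term $h_i\gordo{g}_i$ has exponent $\exp{h_i}+\exp{\gordo{g}_i}$, so if no cancellation of leading terms occurred we would have $\exp{\gordo{f}}=\max_i(\exp{h_i}+\exp{\gordo{g}_i})\in\bigcup_i(\exp{\gordo{g}_i}+\Nn)$ and we would be done. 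The difficulty is precisely when the maximal value $\gordo{\delta}:=\max_i(\exp{h_i}+\exp{\gordo{g}_i})$ is attained by several indices and the corresponding leading coefficients cancel, so that $\exp{\gordo{f}}\prec\gordo{\delta}$. The strategy is then: among all representations $\gordo{f}=\sum_i h_i\gordo{g}_i$, pick one with $\gordo{\delta}$ minimal (possible because $\preceq$ is a well ordering, by Dickson's Lemma); if $\exp{\gordo{f}}=\gordo{\delta}$ we are finished, so suppose $\exp{\gordo{f}}\prec\gordo{\delta}$ and derive a contradiction by producing a representation with strictly smaller $\gordo{\delta}$.

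The heart of the argument is this rewriting step. Let $S=\{i:\exp{h_i}+\exp{\gordo{g}_i}=\gordo{\delta}\}$; all these $\gordo{g}_i$ share the same level (namely $\level(\gordo{f})$ among those contributing), and writing $\gordo{\delta}=(\gordo{d},k)$, the leading term of $h_i\gordo{g}_i$ is a $\field{D}$-multiple of $\xx^{\gordo{d}}\gordo{e}_k$ with coefficient controlled by Proposition~\ref{aditivo}(2); the sum of these leading coefficients is $0$. The key combinatorial lemma (the ``syzygy of a sum'' trick of Schreyer) is that any $\field{D}$-linear combination $\sum_{i\in S}\lambda_i\bigl(\text{leading monomial of }h_i\gordo{g}_i\bigr)$ that vanishes can be rewritten as a $\field{D}$-linear combination of the ``elementary'' syzygies coming from pairs, i.e.\ of the expressions $r_{ij}\gordo{g}_i-r_{ji}\gordo{g}_j$ with appropriate monomial coefficients, all of whose terms have exponent $\preceq\gordo{\delta}$. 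This reduces the cancellation to the $\SP$-polynomials: one shows $\sum_{i\in S}(\text{leading part of }h_i)\gordo{g}_i$ is a $\field{D}[\xx]$-combination $\sum_{i<j,\,i,j\in S}\mu_{ij}\,\SP(\gordo{g}_i,\gordo{g}_j)$ with $\exp{\mu_{ij}}+\exp{\SP(\gordo{g}_i,\gordo{g}_j)}\prec\gordo{\delta}$ for each pair. Now feed in the hypothesis: since $\lres{\SP(\gordo{g}_i,\gordo{g}_j)}{G}=0$, the Division Algorithm (Algorithm~\ref{alg:5.1}) gives $\SP(\gordo{g}_i,\gordo{g}_j)=\sum_\ell c^{(ij)}_\ell\gordo{g}_\ell$ with $\max_\ell(\exp{c^{(ij)}_\ell}+\exp{\gordo{g}_\ell})=\exp{\SP(\gordo{g}_i,\gordo{g}_j)}\prec\eepsilon$-type bound, i.e.\ strictly below $\aalpha_i\vee\aalpha_j$; substituting back and absorbing everything into the $h_i$, we obtain a new representation of $\gordo{f}$ with all term-exponents $\prec\gordo{\delta}$, contradicting the minimality of $\gordo{\delta}$.

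The main obstacle, as usual in Buchberger-type proofs, is making the ``telescoping'' rewriting of the cancelling leading terms into $\SP$-polynomials precise and bookkeeping the exponent estimates so that one genuinely gets a strict decrease of $\gordo{\delta}$ at each step; the noncommutativity enters only through the scalar factors $q_{\aalpha,\bbeta}$ and the maps $(-)^{\aalpha}$, which Proposition~\ref{aditivo} packages cleanly, so no new phenomenon beyond the commutative proof appears, but the constants must be tracked carefully. Termination of the whole descent is guaranteed by the well ordering of $(\Nnm,\preceq)$. Once $\Exp({}_R\langle G\rangle)=\bigcup_i(\exp{\gordo{g}_i}+\Nn)$ is established, $G$ is a Gr\"obner basis by definition, completing the proof. $\qquad\blacksquare$
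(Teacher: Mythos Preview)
Your plan is correct, but it is \emph{not} the route the paper takes, despite your claim to be ``using Schreyer's method, as announced in the text''. What you describe is the classical \emph{minimal-$\gordo{\delta}$} argument (choose among all representations $\gordo{f}=\sum_i h_i\gordo{g}_i$ one with $\gordo{\delta}=\max_i(\exp{h_i}+\exp{\gordo{g}_i})$ minimal, then rewrite the cancelling top via pairwise $\SP$'s and obtain a strictly smaller $\gordo{\delta}$). This works, and the bookkeeping with the scalars $q_{\aalpha,\bbeta}$ and $(-)^{\aalpha}$ can indeed be made precise using Proposition~\ref{aditivo}; the well-ordering of $(\Nnm,\preceq)$ supplies both the existence of a minimal representation and termination.

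The paper's proof is genuinely different and is Schreyer's method in the proper sense. It lifts the problem to $R^t$: from the hypothesis it builds explicit elements $\gordo{s}_{ij}=r_{ij}\gordo{e}'_i-r_{ji}\gordo{e}'_j-\sum_k h_{ijk}\gordo{e}'_k\in R^t$ (one for each $\SP$), introduces the \emph{Schreyer ordering} $\preceq_G$ on $\mathbb{N}^{n,(t)}$, and computes $\operatorname{exp}_{\preceq_G}(\gordo{s}_{ij})$. Then, instead of picking a minimal representation of $\gordo{f}$, it takes \emph{any} preimage $\gordo{g}\in R^t$ with $\varphi(\gordo{g})=\gordo{f}$ and \emph{divides $\gordo{g}$ by the set $S=\{\gordo{s}_{ij}\}$} inside $R^t$. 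The remainder $\gordo{h}=\sum_i h_i\gordo{e}'_i$ automatically has the property that the exponents $\exp{h_i\gordo{g}_i}$ are pairwise distinct (this is exactly what the Newton-diagram condition on $\gordo{h}$ says), so no cancellation of leading terms can occur and $\exp{\gordo{f}}=\max_i(\exp{h_i}+\exp{\gordo{g}_i})$ follows at once. No descent, no contradiction, no telescoping lemma.

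What each approach buys: yours is the more elementary textbook argument and requires no new ordering on $R^t$; its cost is the ad hoc ``syzygy-of-a-sum'' rewriting and the careful tracking of noncommutative constants in that step. The paper's approach front-loads the work into defining $\preceq_G$ and the $\gordo{s}_{ij}$, but then the proof is a one-line application of the Division Algorithm in $R^t$, and---crucially---it gives Corollary~\ref{cor:syz} (that $S$ is a Gr\"obner basis of $\Syz(G)$ with respect to $\preceq_G$) essentially for free, since the same remainder computation with $\gordo{f}=0$ forces $\gordo{h}=0$.
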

\begin{proof}
Let $I = \{ (i,j) : 1 \leqslant i < j \leqslant t, \level (\gordo{g}_i) = \level (\gordo{g}_j) \}$, and assume that $ \lres{SP (\gordo{g}_i, \gordo{g}_j)}{G} = 0$ for all $(i,j) \in I$. By the Division Algorithm
\[
SP(\gordo{g}_i,\gordo{g}_j) = \sum_{k=1}^t h_{ijk}\gordo{g}_k
\]
for some polynomials $h_{ijk} \in R$ with $(i,j) \in I, 1 \leqslant k \leqslant t$ such that 
\[
\max_{\preceq} \{ \exp{h_{ijk}} + \exp{g_k} : k =1, \dots, t\} = \exp{SP(\gordo{g}_i,\gordo{g}_j)}\,.
\]
Let $\{ \gordo{e}'_1, \dots, \gordo{e}'_t \}$ be the canonical basis of $R^t$ and set, for every $(i,j) \in I$, 
\[
\gordo{s}_{ij} = r_{ij}\gordo{e}_i' - r_{ji}\gordo{e}_j' - \sum_{k=1}^t h_{ijk}\gordo{e}'_k \,.
\]
Define, for $(\aalpha, i), (\bbeta,j) \in \mathbb{N}^{n,(t)}$,
\[
(\gordo{\alpha},i)\preceq_G(\gordo{\beta},j)\Longleftrightarrow\left \{
\begin{array}[c]{ll}
\gordo{\alpha}+\exp{\gordo{g}_i}\prec \gordo{\beta}+\exp{\gordo{g}_j}; &
\\   \text{or} & \\
\gordo{\alpha}+\exp{\gordo{g}_i}=\gordo{\beta}+\exp{\gordo{g}_j} & \mbox{ and } j \leqslant i \,.
\end{array}
\right.
\]
It turns out that $\preceq_G$ is an admissible order in $\mathbb{N}^{n,(t)}$ and that
\[
\operatorname{exp}_{\preceq_G}(\gordo{s}_{ij}) = (\aalpha_i \vee \aalpha_j - \aalpha_i, i),
\]
for all $(i,j) \in I$ (see \cite[Lemma 4.1, Lemma 4.5, Ch.6]{Bueso/Gomez/Verschoren:2003}). 

Now, let $\varphi : R^t \to R^m$ be the homomorhism of left $R$-modules defined by $\varphi(\gordo{e}'_i) = \gordo{g}_i$, $i = 1, \dots, t$. Given $f \in {}_R\langle G \rangle$, let $\gordo{g} = \sum_i a_i \gordo{e}'_i \in R^t$ such that $\gordo{f} = \varphi (\gordo{g})$. Now, divide (with Algorithm \ref{alg:5.1}) $\gordo{g}$ by the set $S = \{ \gordo{s}_{ij}: (i, j) \in I \}$. The outcome is
\[
\gordo{g} = \sum_{(i,j) \in I}a_{ij}\gordo{s}_{ij} + \gordo{h}, \qquad \Newton(\gordo{h}) \cap \left(\bigcup_{(i,j) \in I} (\operatorname{exp}_{\preceq_G}(\gordo{s}_{ij})+ \Nn)\right) = \emptyset
\]
for some $a_{ij} \in R$. Write $\gordo{h} = \sum_{i=1}^th_i\gordo{e}'_i$ for some $h_1, \dots, h_t \in R$. Then 
\begin{equation}\label{higi}
f = \varphi(\gordo{g}) = \varphi(\gordo{h}) = \sum_{i=1}^t h_i\gordo{g}_i \,.
\end{equation} 
Next, we will prove that, if $\gordo{h} \neq 0$,  in the expression \eqref{higi}, $\exp{h_i\gordo{g}_i} \neq \exp{h_j\gordo{g}_j}$ if $i \neq j$. To this end, assume that $\exp{h_i\gordo{g}_i} = \exp{h_j\gordo{g}_j}$ for $i < j$. Then $\exp{h_i} + \exp{\gordo{g}_i} = \exp{h_j} + \exp{\gordo{g}_j} = \exp{h_j\gordo{g}_j}$, which in particular implies that
$\exp{h_j\gordo{g}_j} \in (\exp{\gordo{g}_i} + \Nn) \cap (\exp{\gordo{g}_j} + \Nn) = (\aalpha_i \vee \aalpha_j, \level(\gordo{g}_i)) + \Nn$. Thus, there exists $\aalpha \in \Nn$ such that $\exp{h_j} + \aalpha_j = \aalpha_i \vee \aalpha_j + \aalpha$, whence $\exp{h_j} \in \aalpha_i \vee \aalpha_j - \aalpha_j + \Nn$.  Therefore, $(\exp{h_j},j) \in (\aalpha_i \vee \aalpha_j - \aalpha_i, j) + \Nn = \operatorname{exp}_{\preceq_G}(\gordo{s}_{ij}) + \Nn$; a contradiction. 

If $f \neq 0$, then $\gordo{h} \neq 0$ and $\max_{\preceq} \{ \exp{h_i\gordo{g}_i} : 1 \leqslant i \leqslant t \}$ has to be reached at a single value $j \in \{ 1, \dots, s \}$. We thus obtain from \eqref{higi} that $\exp{\gordo{f}} = \max_{\preceq} \{ \exp{h_i\gordo{g}_i} : 1 \leqslant i \leqslant t \}$, and $G$ is a  Gr\"obner  basis by Theorem \ref{Groebnerchar}. 
\end{proof}

As a consequence of Theorem  \ref{Buch},  in conjunction with Dickson's Lemma, we get that the well known ``commutative'' Buchberger's Algorithm, as formulated for example in \cite{Adams/Loustaunau:1994} or \cite{Becker/Weispfenning:1993}, holds true in the setting of left PBW rings (see Algorithm \ref{alg:5.2}). 

 For commutative polynomial rings, an alternative to Gr\"obner bases are Janet bases. In the non-commutative setting, Janet's algorithm was adapted to polynomial Ore algebras in \cite{Robertz:2007}.  The implementation based on
polynomial Ore algebras \cite{Chyzak/Quadrat/Robertz:2007}  can actually make use
of another \textsc{Maple} package called {\sc JanetOre} which
computes Janet bases for submodules of free
left modules over certain Ore algebras.  It would be interesting to investigate to what extent  these ideas can be extended to a general left PBW ring. 

 \begin{algorithm}
\caption{Gr\"obner Basis Algorithm for Left Modules over a left PBW ring $R = \field{D}\{x_1, \dots, x_n; Q, Q', \preceq \}$} \label{alg:5.2}
\begin{algorithmic}
\REQUIRE $F=\{\gordo{f}_1,\ldots,\gordo{f}_s\}\subseteq R^m$ with $\gordo{f}_i\neq
\gordo{0}$ ($1\leqslant i\leqslant s$) \ENSURE $G=\{\gordo{g}_1,\ldots,\gordo{g}_t\}$, a
Gr\"obner basis for $R\gordo{f}_1+\cdots+R\gordo{f}_s$. \INIT $G:=F$,
$B:=\{\{\gordo{f},\gordo{g}\};\ \gordo{f}, \gordo{g}\in G, \gordo{f} \neq \gordo{g}, \level(\gordo{g}) = \level(\gordo{f}) \}$ \WHILE{$B\neq
\emptyset$} \STATE Select $\{\gordo{f},\gordo{g}\}\in B$ \STATE $B:=B\setminus
\{\{\gordo{f},\gordo{g}\}\}$ 
 \STATE
$\gordo{h}:=\lres{\SP(\gordo{f},\gordo{g})}{G}$ \IF{$\gordo{h}\neq \gordo{0}$} \STATE
$B:=B\cup\{\{\gordo{p},\gordo{h}\};\ \gordo{p}\in G, \level(\gordo{p}) = \level(\gordo{h}) \}$ \STATE $G:=G\cup\{\gordo{h}\}$
\ENDIF \ENDWHILE
\end{algorithmic}
\end{algorithm}

\subsubsection{Computation of Syzygies.} 
Given a finite subset $F = \{\mathbf{f}_1, \dots, \mathbf{f}_s \} \subseteq R^m$,  whose elements have been implicitly ordered by their subscripts, we may consider the matrix 
\[
F = \begin{pmatrix} \mathbf{f}_1 \\ \vdots \\ \mathbf{f}_s  \end{pmatrix}\in R^{s \times m}\,.
\]
Thus, $F$ defines a homomorphism of left $R$-modules $\psi : R^s \to R^m$ by $\psi (\gordo{f}) = \gordo{f} F$ for $\gordo{f} \in R^s$ (in the notation of Section \ref{LA}, $F = A_\psi$).  Then $\ker \psi = \Syz(F)$. 

The following corollary of the proof of Theorem  \ref{Buch}  gives the key to compute $\Syz(F)$. We keep the notation introduced there. 

\begin{corollary}\label{cor:syz}
If $G$ is a Gr\"obner basis of ${}_R\langle G \rangle$, then the set $S = \{ \gordo{s}_{ij}: (i, j) \in I \}$ is a Gr\"obner basis of $\Syz(G)$ with respect to $\preceq_G$.
\end{corollary}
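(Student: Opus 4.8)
The plan is to obtain the statement as an application of Theorem~\ref{Groebnerchar}, carried out this time inside the free left $R$-module $R^t$ endowed with the order $\preceq_G$. Three things have to be verified: that $\preceq_G$ is an admissible order on $\mathbb{N}^{n,(t)}$; that $S \subseteq \Syz(G)$; and that every element of $\Syz(G)$ has remainder $\gordo{0}$ upon division by $S$ relative to $\preceq_G$. The first of these was already recorded in the proof of Theorem~\ref{Buch}, so the whole division theory of the previous subsection is available in $R^t$ with the order $\preceq_G$.

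For the inclusion $S \subseteq \Syz(G)$, recall that $\Syz(G) = \ker\varphi$, where $\varphi : R^t \to R^m$ is the homomorphism of left $R$-modules with $\varphi(\gordo{e}'_k) = \gordo{g}_k$ used in the proof of Theorem~\ref{Buch}. Applying $\varphi$ to $\gordo{s}_{ij} = r_{ij}\gordo{e}'_i - r_{ji}\gordo{e}'_j - \sum_{k=1}^t h_{ijk}\gordo{e}'_k$ and using both the definition $SP(\gordo{g}_i,\gordo{g}_j) = r_{ij}\gordo{g}_i - r_{ji}\gordo{g}_j$ and the relation $SP(\gordo{g}_i,\gordo{g}_j) = \sum_{k=1}^t h_{ijk}\gordo{g}_k$ that defines the $h_{ijk}$, one gets $\varphi(\gordo{s}_{ij}) = SP(\gordo{g}_i,\gordo{g}_j) - SP(\gordo{g}_i,\gordo{g}_j) = \gordo{0}$; hence $\gordo{s}_{ij} \in \Syz(G)$ for every $(i,j) \in I$.

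For the reduction statement, let $\gordo{u} \in \Syz(G)$ be arbitrary and divide it by $S$ with Algorithm~\ref{alg:5.1} relative to $\preceq_G$, obtaining $\gordo{u} = \sum_{(i,j)\in I} a_{ij}\gordo{s}_{ij} + \gordo{h}$ with $\gordo{h} = \gordo{0}$ or $\Newton(\gordo{h}) \cap \bigcup_{(i,j)\in I}(\operatorname{exp}_{\preceq_G}(\gordo{s}_{ij}) + \Nn) = \emptyset$. Applying $\varphi$ and using $\varphi(\gordo{s}_{ij}) = \gordo{0}$ from the previous step yields $\varphi(\gordo{h}) = \varphi(\gordo{u}) = \gordo{0}$; writing $\gordo{h} = \sum_{i=1}^t h_i\gordo{e}'_i$, this reads $\sum_{i=1}^t h_i\gordo{g}_i = \gordo{0}$. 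Here I reuse verbatim the argument from the proof of Theorem~\ref{Buch}: if $\gordo{h} \neq \gordo{0}$, then the fact that $\Newton(\gordo{h})$ avoids each $\operatorname{exp}_{\preceq_G}(\gordo{s}_{ij}) + \Nn$ forces $\exp{h_i\gordo{g}_i} \neq \exp{h_j\gordo{g}_j}$ for all $i \neq j$ with $h_i \neq 0 \neq h_j$, so the maximum $\max_{\preceq}\{\exp{h_i\gordo{g}_i} : h_i \neq 0\}$ is attained at a single index, and then Proposition~\ref{aditivo} shows that the corresponding leading term of $\sum_i h_i\gordo{g}_i$ cannot cancel, i.e. $\sum_i h_i\gordo{g}_i \neq \gordo{0}$, a contradiction. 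Therefore $\gordo{h} = \gordo{0}$, that is, $\lres{\gordo{u}}{S} = \gordo{0}$.

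By the equivalence of (a) and (b) in Theorem~\ref{Groebnerchar}, applied in $R^t$ to the submodule $\Syz(G)$, the subset $S$, and the order $\preceq_G$, we conclude that $S$ is a Gr\"obner basis of $\Syz(G)$ with respect to $\preceq_G$. I do not expect any real obstacle: the corollary is essentially a re-reading of the proof of Buchberger's theorem, and the only points requiring care---that $\preceq_G$ is admissible and that $S$ lies in $\Syz(G)$---have the short, direct verifications supplied above.
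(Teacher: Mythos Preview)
Your proof is correct and follows essentially the same route as the paper's: divide an arbitrary syzygy by $S$ with respect to $\preceq_G$, observe that the remainder $\gordo{h}$ still maps to $\gordo{0}$ under $\varphi$, and then invoke the distinctness of the exponents $\exp{h_i\gordo{g}_i}$ (established in the proof of Theorem~\ref{Buch}) to force $\gordo{h}=\gordo{0}$, concluding via Theorem~\ref{Groebnerchar}. The only difference is cosmetic: you make explicit the verification $S\subseteq\Syz(G)$, which the paper leaves implicit in the construction of the $\gordo{s}_{ij}$.
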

\begin{proof}
Given $\gordo{g}$ as in the proof of Theorem \ref{Buch}, then $\gordo{g} \in \Syz(G)$ if and only if $\gordo{f} = 0$. In this case, it follows that if in \eqref{higi} there is some nonzero summand, then taking $i$  such that $\exp{h_i\gordo{g}_i}$ is maximal, then $\exp{h_i\gordo{g}_i} = \exp{h_j\gordo{g}_j}$ for some $i \neq j$. But we have seen in the proof of Theorem \ref{Buch} that this is not possible. Thus, $h_i\gordo{g}_i = 0$ for every $i = 1, \dots, t$ and, hence, $\gordo{h} = 0$. Since $\gordo{h}$ is a  remainder  of a division of $\gordo{g}$ by $S$, we deduce from Theorem \ref{Groebnerchar} that $S$ is a Gr\"obner basis of $\Syz(G)$. 
\end{proof}

A finite set of generators of $\Syz(F)$ is given by the rows of the matrix $\mathsf{syz}(F)$. To compute this matrix, we need matrices $P \in R^{t \times s}$ and $\hat{P} \in R^{s \times t}$ such that $G = PF$ and $F = \hat{P}G$, where $G$ is a Gr\"obner basis of ${}_R\langle F \rangle$ (and, hence, ${}_R \langle G \rangle = {}_R \langle F \rangle$).  The matrix $P$ may be obtained by keeping track of the reductions performed during Algorithm \ref{alg:5.2}, while $\hat{P}$ is computed by using the Division Algorithm \ref{alg:5.1}. Let $S = \{\gordo{s}_1, \dots, \gordo{s}_r \}$ be the set of generators of $\Syz(G)$ from Corollary \ref{cor:syz}. Interpreting its elements as the rows of a matrix, $S = \mathsf{syz}(G) \in R^{r \times t}$.  We get from \eqref{syzcomp} that 
\begin{equation}\label{eq:syzF}
\mathsf{syz}(F) = \begin{pmatrix} \mathsf{syz}(G) P \\
I_s -\hat{P} P \end{pmatrix} \qquad (\text{with } G = PF, F = \hat{P}G)\,.
\end{equation}

\subsubsection{Remarks on Homological Computations.}

\begin{remark}
With \eqref{eq:syzF} at hand, the general Propositions \ref{prop:Im} and \ref{prop:ker} lead to algorithms for the computation of images and kernels of homomorphisms between finitely presented left modules over a left PBW ring, and, henceforth, free resolutions, pullbacks, pushouts, and potentially all the finite categorical constructions from Homological Algebra (see e.g. \cite{Hilton/Stammbach:1971} for these notions).  
\end{remark} 

\begin{remark}
As for the free resolutions concerns, let us remark that every finitely generated left module over $R = \field{D}\{x_1,\dots,x_n; Q,Q', \preceq \}$ has a free resolution of length at most $n$ (thus, $R$ has finite left global homological dimension). For a constructive proof, based on Schreyer's method, see \cite[Theorem 5.5, Ch. 6]{Bueso/Gomez/Verschoren:2003}. Analogous approaches were developed in \cite{Levandovskyy:2005} for $G$-algebras and in \cite{Adams/Loustaunau:1994} for commutative polynomial rings. 
\end{remark}

\begin{remark}\label{rem:jM}
The possibility of computing effectively free resolutions of finitely presented left modules opens, as in the case of left PBW rings, the chance of developing algorithms for the calculation of the  groups $\Ext_R^{i}(M,N)$.  This were already done in \cite{Bueso/Gomez/Lobillo:2001a,Bueso/Gomez/Verschoren:2003} in the case that $N$ is a centralizing bimodule, and in \cite{Lobillo:1998} when $N = R$ is a PBW algebra (see also \cite{Chyzak/Quadrat/Robertz:2005}, where homological computations over Ore algebras are related to linear control systems).  As a particular case, given a  nonzero  finitely presented left module $M$ over a left PBW ring $R$, we may compute the \emph{grade number}
\begin{equation}\label{eq:grade}
j(M) = \min \{ j \in \N : \Ext^j_{ R}(M,R) \neq 0 \}\,.
\end{equation}
This number plays a relevant role in the study of \emph{holonomic modules}, and of Bernstein Duality,  over rings of differential operators, namely, those finitely generated modules whose grade number equals the global homological dimension of the base ring (see \cite{Bjork:1979}). Many rings of differential operators are factors of differential operator rings. Recall from \ref{cor:diffop} that any differential operator ring $\field{D}[x_1,\delta_1]\cdots[x_n,\delta_n]$ over a skew field is a PBW ring.  Thus, the possibility of computing effectively the grade number for left PBW rings is of potential interest in Algebraic Analysis  (e.g., in the context of the grade filtration,
cf. \cite{Quadrat:2013}).  A simpler algorithm for computing $j(M)$ for modules over PBW algebras is given at the end of Section \ref{sec:GKdim}. 
\end{remark}

\section{Gelfand-Kirillov Dimension for Modules over PBW Algebras}\label{sec:GKdimPBW}

In this section we will present an algorithm for computing the Gelfand-Kirillov dimension of a finitely generated module over a PBW algebra. We will include a very brief introduction to filtered algebras and modules, and we will characterize PBW algebras as the filtered algebras having a quantum affine space as associated graded algebra.  The algorithm for the computation of the Gelfand-Kirillov dimension reduces the problem to the determination of the degree of the Hilbert function of a (commutative) monomial ideal. This reduction needs some results on filtered algebras. Such an algorithm was first presented for enveloping algebras of finite dimensional Lie algebras in \cite{Bueso/Castro/Jara:1997}, then for PBW algebras with quadratic relations \cite{Bueso/Castro/Gomez/Lobillo:1998}, and finally in full generality \cite{Bueso/Gomez/Lobillo:2001b}.

\subsection{Filtered Algebras and the Gelfand-Kirillov Dimension}

Filtrations play a relevant role in the study of rings of differential operators (see \cite{Bjork:1979}) and, more generally, in the investigation of properties of non-commutative algebras over a field (a good reference here is \cite{McConnell/Robson:1988}). The degree of growth of some filtrations (the standard finite dimensional filtrations) of an algebra provide a useful invariant called Gelfand-Kirillov dimension. This dimension makes also sense for modules, and may be effectively computed for a large class of algebras.  Some fundamental properties of this invariant
can be found in \cite{Lorenz:1988}, \cite[Chapter 8]{McConnell/Robson:1988} and
\cite{Krause/Lenagan:2000}.

\subsubsection{Growth degree.}
For any function $f: \mathbb{N} \rightarrow [1,+\infty)$, consider its \emph{growth degree} or \emph{degree} defined as
\begin{equation}\label{dimension}
d(f) = \inf \{ \nu : f(s) \leqslant s^{\nu} \hbox{ for } s \gg  0 \}\,.
\end{equation}
Since, given $s$ and $\nu$, one has
\[
f(s) \leqslant s^{\nu} \Leftrightarrow \log f(s)/ \log s \leqslant \nu,
\]
it follows that 
\[
d(f) = \limsup \frac{\log f(s)}{\log s}
\]
which, of course, needs not to be finite. If $f(s)$ coincides with a polynomial function $p(s)$ for $s$ big enough, then $d(f)$ is the usual degree of $p(s)$. 

\subsubsection{Filtrations and Gelfand-Kirillov dimension.}

Let $\field{k}$ denote a field. Given a left module $M$ over a $\field{k}$-algebra $R$, and vector subspaces $V \subseteq R$, $U \subseteq M$, by $VU$ we denote the vector subspace of $M$ spanned by all products of the form $vu$, with $v \in V$, $u \in U$. This gives sense to the expression $V^2$ and, recursively, to the power $V^s$ for $s$ any nonnegative integer. We understand $V^0 = \field{k}$ and $V^1 = V$. 

\begin{definition}
Let $R$ be an algebra over a field $\field{k}$. A \emph{filtration of} $R$ is a family of vector subspaces $FR = \{ F_sR : s \in \N \}$ of $R$ such that
\begin{enumerate}
\item for each $i, j \in \N$, $F_iR \, F_jR \subseteq F_{i+j}R$,
\item $F_iR \subseteq F_jR$ if $i \leqslant j$,  and
\item $\bigcup_{ s  \in \N} F_sR = R$. 
\end{enumerate}
\end{definition}

The filtration $FR$ is said to be \emph{finite} or \emph{finite dimensional} if $F_sR$ is of finite dimension over $ \field{k}$ for every $s \geq 0$. 

A finite filtration is said to be \emph{standard} if $F_0R = \field{k}$ and $F_sR = (F_1R)^s$, for every $s \geq 0$. Observe that if $R$ has a standard filtration, then it is generated as an algebra by $F_1R$ and, henceforth, it is finitely generated as an algebra over $\field{k}$. For standard filtrations on $R$ we often use the notation $F_sR = R_s$.  

If our algebra $R$ is finitely generated as a $\field{k}$-algebra (we say then that $R$ is \emph{affine} over $\field{k}$), then there exists a vector subspace $V$ of finite dimension such that $\bigcup_{s \in \N}  V^s = R$, that is, $R$ has a finite standard filtration given by $R_s = V^s$ for $s \in  \N$ (we assume that $1 \in V$).

\begin{definition}
Given a filtration $FR$ of $R$, and a left $R$-module $M$,  a \emph{filtration} of $M$ is  any family of vector subspaces $\{ F_sM : s \in \N \}$ such that 
\begin{enumerate}
\item for each $i, j \in \N$, $F_iR \, F_jM \subseteq F_{i+j}M$,
\item $F_iM \subseteq F_jM$ if $i \leqslant j$,  and
\item $\bigcup_{s \in \N} F_sM = M$. 
\end{enumerate}
\end{definition}

The following lemma is, of course, very well known. Its easy proof is included here to stress the dependence of the definition of the Gelfand-Kirillov dimension from the standard filtrations (for non standard filtrations things are a little more complicated, since a good behavior with respect to Gelfand-Kirillov dimension depends on the properties of the associated graded algebra, see \cite{McConnell/Stafford:1989} for a deep study of this topic). 

\begin{lemma}\label{GKdimdefined}
Let $\{ R_s : s \in \N \}$ and $\{ R'_s : s \in \N \}$ be finite dimensional standard filtrations of $R$. Then
\[
d(\dim_k R_s) = d(\dim_k R'_s)
\]
\end{lemma}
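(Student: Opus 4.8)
The plan is to exploit that both filtrations are \emph{standard}, so each is determined by a finite-dimensional generating subspace. Write $R_s = V^s$ and $R'_s = W^s$ with $V = R_1$, $W = R'_1$ finite-dimensional subspaces containing $1$. Since $R = \bigcup_s W^s$ and $V$ is finite-dimensional, spanned by finitely many elements, each of those finitely many generators lies in some $W^{m_i}$; taking $m = \max_i m_i$ we get $V \subseteq W^m$, hence $V^s \subseteq W^{ms}$ for all $s$, i.e. $\dim_k R_s \le \dim_k R'_{ms}$ for all $s$. Symmetrically, there is an integer $m'$ with $W \subseteq V^{m'}$, hence $\dim_k R'_s \le \dim_k R_{m's}$ for all $s$.

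Next I would feed these two inequalities into the definition of the growth degree. Set $f(s) = \dim_k R_s$ and $g(s) = \dim_k R'_s$; both are nondecreasing functions $\N \to [1,+\infty)$ (they take the value $\ge 1$ since $1 \in R_0$). From $f(s) \le g(ms)$ we get, using the $\limsup$ formula $d(f) = \limsup_s \frac{\log f(s)}{\log s}$,
\[
d(f) = \limsup_{s} \frac{\log f(s)}{\log s} \le \limsup_{s} \frac{\log g(ms)}{\log s} = \limsup_{s}\left( \frac{\log g(ms)}{\log(ms)}\cdot \frac{\log(ms)}{\log s}\right) = d(g),
\]
because $\log(ms)/\log s \to 1$ as $s \to \infty$ and the first factor has $\limsup$ equal to $d(g)$ (the subsequence $ms$ is cofinal and $g$ is nondecreasing, so passing to it does not change the $\limsup$). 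The symmetric inequality $g(s) \le f(m's)$ gives $d(g) \le d(f)$ in the same way, whence $d(f) = d(g)$.

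The only mild subtlety — and the one place to be careful — is the passage from $f(s) \le g(ms)$ to the inequality of $\limsup$s: one must check that replacing the argument $s$ by $ms$ inside $g$ does not inflate the growth degree, which is where the factor $\log(ms)/\log s \to 1$ and the monotonicity of $g$ are used. Everything else is the elementary bookkeeping of comparing two standard filtrations, exactly as in the definition of Gelfand--Kirillov dimension; once Lemma~\ref{GKdimdefined} is established, $\GKdim R := d(\dim_k R_s)$ is well defined independently of the chosen standard filtration.
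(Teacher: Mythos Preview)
Your proof is correct and takes essentially the same approach as the paper: embed one generating subspace into a power of the other via finite-dimensionality, propagate to $R_s \subseteq R'_{ms}$ (and symmetrically), and conclude equality of growth degrees. The paper is simply terser on the final step, asserting the inequality of degrees directly from the inclusion, whereas you spell out the $\limsup$ computation; note that for the needed bound $\limsup_s \log g(ms)/\log(ms) \le d(g)$ the subsequence inequality alone already suffices, so the appeal to monotonicity of $g$ is harmless but unnecessary.
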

\begin{proof}
Since $R_1'$ is finite dimensional and $R = \bigcup_{s \in \N} R_{ s}$, there exists $a \in \N$ such that $R'_1 \subseteq R_a$.  Being both filtrations standard, we get $R'_s = {R'_1}^s \subseteq {R_a}^s = {R_1}^{as} = R_{as}$ for every $s \in \N$.  Thus, $d(\dim_k R'_s) \leqslant d(\dim_k R_s)$.  The other inequality follows by symmetry. 
\end{proof}

\begin{definition}
The \emph{Gelfand-Kirillov dimension} of an affine $\field{k}$-algebra is defined as the degree of the function $s \mapsto \dim_{\field{k}}R_s$ for any standard finite dimensional filtration $\{R_s : s \in  \N  \}$ of $R$. 
\end{definition}

An argument similar to that of the proof of Lemma \ref{GKdimdefined} shows that the following definition is mathematically sound. 

\begin{definition}
Let $M$ be a finitely generated left $R$-module, and $M_0$ the finite dimensional vector subspace of $M$ spanned by any finite set of generators of ${}_RM$. Let $\{ R_s : s \in  \N  \}$ be any finite dimensional standard filtration of $R$. The \emph{Gelfand-Kirillov dimension} of $M$ is defined as the growth degree of the function $s \mapsto \dim_{\field{k}}R_sM_0$. 
\end{definition}

\subsection{PBW Algebras and Filtrations}

Let $\field{k}$ be any commutative field, and $R = \field{k}\{ x_1, \dots, x_n; Q, Q', \preceq \}$ be a left PBW ring. If $R$ is a $\field{k}$-algebra, then we will say that $R$ is a \emph{PBW algebra}. This is equivalent to require that the relations $Q'$ are trivial, that is, $x_i a = a x_i$ for every $a \in \field{k}$, $1 \leqslant i \leqslant n$. Obviously, a PBW algebra is also a right PBW ring. 

Although PBW algebras are perfectly defined as before,  our purposes for this section require the following rephrasing of the definition.

\begin{definition}\label{def:PBWalg}
An algebra $R$ over a field $\cuerpo{k}$ is said to be a \emph{PBW algebra}  if there exist  $x_1, \dots, x_n \in R$ such that 
\begin{enumerate}
\item\label{PBWbasis} \textbf{(PBW basis)} The standard monomials $\mono{x}{\alpha} =x_1^{\alpha_1} \cdots x_n^{\alpha_n}$
with  $\aalpha = (\alpha_1, \dots, \alpha_n) \in \BbbN^n$ form a basis of $R$ as a
vector space over $\field{k}$.
\item\label{QRelations} \textbf{(Quantum relations)} There are nonzero scalars $q_{ji} \in \cuerpo{k}$ and polynomials $p_{ji} \in R$ ($1
\leqslant i < j \leqslant n$) such that the relations 
$$Q = \{
x_jx_i = q_{ji}x_ix_j + p_{ji}, \; 1 \leqslant i < j \leqslant n
\}$$ 
hold in $R$. 
\item \textbf{(Bounded relations)} There is an admissible ordering
$\preceq$ on $\mathbb{N}^n$ such that 
$$
\exp{p_{ji}} \prec
\eepsilon_i + \eepsilon_j$$ 
for every $1 \leqslant i < j \leqslant
n$.
\end{enumerate}
We will use the notation $R = \field{k}\{ x_1, \dots, x_n; Q, \preceq \}$. 
\end{definition}

\begin{remark} Our definition of PBW algebra is equivalent to that of \emph{polynomial algebra of solvable type} from \cite{Kandri-Rody/Weispfenning:1988}. These algebras are also known as \emph{$G$-algebras} after \cite{Levandovskyy:2005}, where several fundamental algorithms for ideals and modules were implemented in \textsc{Singular} (see also \cite{Levandovskyy/Schoenemann:2003}, \cite{Levandovskyy:2006}).  Implementations in \textsc{MAS} (Modula-2 Algebra System) were done in \cite{Kredel:1993}. We prefer to keep the name PBW algebras because they are a particular case of PBW rings.  
\end{remark}

\begin{example}
The basic example of PBW algebra is the \emph{$n$-dimensional quantum affine space} $\mathcal{O}_{q}(\field{k}^n)$, where $q = (q_{ij}) \in \field{k}^{n \times n}$ is multiplicatively antisymmetric matrix, that is, $q_{ij} \neq 0$, and $q_{ij} = q_{ji}^{-1}$ for all $1 \leqslant i, j \leqslant n$,  and $q_{ii} = 1$ for all $1 \leqslant i \leqslant n$. This algebra is generated by $n$ indeterminates $x_1, \dots, x_n$ subject to the relations $x_jx_i = q_{ji}x_ix_j$ for all $1 \leqslant i,j \leqslant n$.  Clearly, $\mathcal{O}_{q}(\field{k}^n)$ is a PBW algebra with respect to any admissible order $\preceq$ on $\Nn$. When $q_{ij} = 1$ for all $i, j$, we get the usual commutative polynomial ring $\field{k}[x_1, \dots, x_n]$.
\end{example}

A remarkable characterization of PBW algebras is that they are precisely those filtered algebras with an associated graded ring isomorphic to $\mathcal{O}_q(\field{k}^n)$ for some $n$ (see Theorem \ref{refiltrar} below).   Recall that if an algebra $R$ over $\field{k}$ has a filtration $FR = \{F_sR : s \in \N \}$ then its \emph{associated graded algebra} $\gr(R)$ is defined as the vector space
\[
\gr(R) = \bigoplus_{s \in \N} \frac{F_sR}{F_{s-1}R} \qquad (F_{-1} R = \{ 0 \}), 
\]
endowed with the product defined on \emph{homogeneous elements} 
\[
a + F_{s-1}R \in \frac{F_{s}R}{F_{s-1}R}, \quad a' + F_{s' -1}R \in \frac{F_{s'}R}{F_{s'-1}R} 
\]
by
\[
(a + F_{s-1}R)(a' + F_{s'-1}R) = aa' + F_{s + s' -1}R,
\]
and extended to $\gr(R)$ by linearity.

We need also to fix some notation on degree lexicographical orders. For any $\gordo{u} = (u_1,\dots,u_n) \in \mathbb{R}^n$ with $u_i \geq 0$ for all $i = 1, \dots, n$, and $\aalpha \in \Nn$, write $\abs{\gordo{u}}{\aalpha} = u_1\alpha_1 + \dots + u_n\alpha_n$. By $\preceq_{\gordo{u}}$ we denote the \emph{$\gordo{u}$-weighted lexicographical}  order on $\Nn$, defined by  
\[
\aalpha \preceq_{\gordo{u}} \bbeta \Leftrightarrow \begin{cases} \abs{\gordo{u}}{\aalpha} < \abs{\gordo{u}}{\bbeta} & \\
 or &  \\
 \abs{\gordo{u}}{\aalpha} = \abs{\gordo{u}}{\bbeta} & \text{ and } \aalpha \preceq_{lex} \bbeta,  \end{cases} 
\]
where $\preceq_{lex}$ denotes the lexicographical order on $\Nn$ with $\eepsilon_1 \prec_{lex} \cdots \prec_{lex} \eepsilon_n.$
When $\gordo{u} = (1, \dots, 1)$, the ordering $\preceq_{\gordo{u}}$ is just the \emph{degree lexicographical ordering}. By $\N_+^n$ we denote the set of all vectors $\gordo{w} = (w_1, \dots, w_n) \in \Nn$ with $w_i >0$ for all $i = 1, \dots, n$. 

\begin{theorem}\cite[Theorem 3.14]{Bueso/Gomez/Lobillo:2001a}\label{refiltrar}
The following conditions are equivalent for an algebra $R$ over a field $\field{k}$:
\begin{enumerate}[(a)]
\item\label{filt}  There is a filtration of $R$ such that $\gr(R)$ is isomorphic to $\mathcal{O}_{q}(\field{k}^n)$;
\item\label{filtfin} there is a finite filtration of $R$ such that $\gr(R)$ is isomorphic to $\mathcal{O}_q(\field{k}^n)$;
\item\label{pbw} $R$ is a PBW $\field{k}$-algebra with respect to some admissible ordering $\preceq$ in $\Nn$;
\item\label{pbww} $R$ is a PBW $\field{k}$-algebra with respect to some admissible ordering $\preceq_{\gordo{w}}$ in $\Nn$, for some $\gordo{w} \in \N_+^n$. 
\end{enumerate}
\end{theorem}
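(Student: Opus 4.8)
The plan is to prove the cycle of implications \eqref{pbww} $\Rightarrow$ \eqref{pbw} $\Rightarrow$ \eqref{filtfin} $\Rightarrow$ \eqref{filt} $\Rightarrow$ \eqref{pbww}, since \eqref{filtfin} $\Rightarrow$ \eqref{filt} is trivial (a finite filtration is a filtration) and \eqref{pbww} $\Rightarrow$ \eqref{pbw} is also immediate (a weighted lexicographical order $\preceq_{\gordo{w}}$ is an admissible order). So the real content lies in \eqref{pbw} $\Rightarrow$ \eqref{filtfin} and in \eqref{filt} $\Rightarrow$ \eqref{pbww}.

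For \eqref{pbw} $\Rightarrow$ \eqref{filtfin}: suppose $R = \field{k}\{x_1,\dots,x_n; Q, \preceq\}$ is a PBW algebra with respect to an admissible order $\preceq$. Choose a vector $\gordo{u} \in \N_+^n$ which represents $\preceq$ on the finite set of exponents $\{\eepsilon_i + \eepsilon_j\} \cup \Newton(p_{ji})$ appearing in the defining relations; more precisely, one uses the standard fact (a consequence of Robbiano's classification of term orders, as used in \cite{Robbiano:1986} and cited around Theorem \ref{refiltrar}) that on any finite subset of $\Nn$ an admissible order agrees with $\preceq_{\gordo{u}}$ for suitable $\gordo{u}$ with positive integer entries. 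Assigning $x_i$ the weight $u_i$, define $F_sR$ to be the $\field{k}$-span of the standard monomials $\xx^{\aalpha}$ with $\abs{\gordo{u}}{\aalpha} \leqslant s$. The bounded relations $\exp{p_{ji}} \prec \eepsilon_i + \eepsilon_j$ translate, under this choice of $\gordo{u}$, into $\abs{\gordo{u}}{\gordo{\gamma}} < u_i + u_j$ for every $\gordo{\gamma} \in \Newton(p_{ji})$, which guarantees $F_iR\,F_jR \subseteq F_{i+j}R$, so this is a genuine filtration, and it is finite dimensional because each $F_sR$ has a finite basis of standard monomials. In $\gr(R)$ the images $\bar{x}_i$ satisfy $\bar{x}_j\bar{x}_i = q_{ji}\bar{x}_i\bar{x}_j$ because the correction terms $p_{ji}$ lie in strictly lower filtration degree and hence vanish in the associated graded; the standard monomials in the $\bar{x}_i$ still form a $\field{k}$-basis of $\gr(R)$ by a leading-term argument. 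Therefore $\gr(R) \cong \mathcal{O}_q(\field{k}^n)$.

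For \eqref{filt} $\Rightarrow$ \eqref{pbww}: suppose $R$ carries a filtration $FR$ with $\gr(R) \cong \mathcal{O}_q(\field{k}^n)$. Pick generators $\bar{y}_1,\dots,\bar{y}_n$ of $\gr(R)$ corresponding to the canonical generators of the quantum affine space, lying in homogeneous degrees $w_1,\dots,w_n \geq 1$ respectively, and lift each $\bar{y}_i$ to an element $x_i \in F_{w_i}R$. One then checks, using that the standard monomials in the $\bar{y}_i$ form a basis of $\gr(R)$, that the standard monomials $\xx^{\aalpha}$ form a basis of $R$ as a $\field{k}$-vector space (this is the usual ``lifting a basis through a filtration'' argument: a standard-monomial expansion is obtained by induction on filtration degree, and linear independence likewise descends). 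The relation $\bar{y}_j\bar{y}_i = q_{ji}\bar{y}_i\bar{y}_j$ in $\gr(R)$ means that $x_jx_i - q_{ji}x_ix_j$ lies in $F_{w_i+w_j-1}R$, so when we rewrite it in standard-monomial form as $p_{ji}$ we get $\abs{\gordo{w}}{\gordo{\gamma}} < w_i + w_j$ for every $\gordo{\gamma} \in \Newton(p_{ji})$, i.e. $\exp{p_{ji}} \prec_{\gordo{w}} \eepsilon_i + \eepsilon_j$ — which is exactly the bounded-relation condition with respect to $\preceq_{\gordo{w}}$, $\gordo{w} = (w_1,\dots,w_n) \in \N_+^n$. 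Here one should be mildly careful that the $w_i$ are positive integers: the homogeneous components of $\gr(R)$ are indexed by $\N$, so the degrees of the chosen generators are automatically nonnegative integers, and they must be $\geq 1$ since $\gr(R)_0 = \field{k}$ contains no generator.

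The main obstacle will be the bookkeeping in the two ``lift a basis along a filtration'' steps and, especially, the passage between an abstract admissible order and a concrete positive-integer weight vector that represents it on the finite set of exponents occurring in the relations; this compatibility statement is the technical heart, and one either invokes Robbiano's theorem on the structure of term orders or proves directly, by an inductive/compactness argument on that finite exponent set, that such a $\gordo{u}$ exists. Everything else is a routine verification that the filtration axioms hold and that multiplication in $\gr(R)$ kills the lower-order correction terms.
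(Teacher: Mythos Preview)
Your overall plan and the two substantive arguments are close to the paper's, but there is a genuine gap in the step \eqref{filt} $\Rightarrow$ \eqref{pbww}. You assert that the lifted generators have degrees $w_i \geq 1$ ``since $\gr(R)_0 = \field{k}$ contains no generator.'' That equality is unjustified: condition \eqref{filt} puts no hypothesis on $F_0R$, so $\gr(R)_0 = F_0R$ may be strictly larger than $\field{k}$. For a blunt example, take $R = \field{k}[x]$ with the trivial filtration $F_sR = R$ for all $s \geq 0$; then $\gr(R) = R \cong \mathcal{O}_q(\field{k}^1)$ and the generator sits in degree $0$. Your lifting argument therefore only produces a weight vector $\gordo{w} \in \N^n$ with possibly zero entries, which proves \eqref{filt} $\Rightarrow$ \eqref{pbw} (since $\preceq_{\gordo{w}}$ is admissible even then) but not \eqref{filt} $\Rightarrow$ \eqref{pbww}. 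Your proposed cycle $\eqref{pbww} \Rightarrow \eqref{pbw} \Rightarrow \eqref{filtfin} \Rightarrow \eqref{filt} \Rightarrow \eqref{pbww}$ then fails to close.

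The paper anticipates exactly this and routes the proof differently: it shows \eqref{filt} $\Rightarrow$ \eqref{pbw} by the lifting argument (explicitly allowing $u_i \geq 0$), then \eqref{pbw} $\Rightarrow$ \eqref{pbww} via Proposition~\ref{refiltra}, which recasts the existence of a \emph{strictly positive} integer weight vector as nonemptiness of the open polytope $\Phi_Q$ in \eqref{feasible}. Finally \eqref{pbww} $\Rightarrow$ \eqref{filtfin} is done by the filtration $F_s^{\gordo{w}}R = \{f : \abs{\gordo{w}}{\exp f} \leq s\}$. Your \eqref{pbw} $\Rightarrow$ \eqref{filtfin} argument is essentially the paper's \eqref{pbw} $\Rightarrow$ \eqref{pbww} $\Rightarrow$ \eqref{filtfin} compressed into one step, with the polytope statement replaced by an appeal to Robbiano; so the missing implication \eqref{pbw} $\Rightarrow$ \eqref{pbww} is already implicit in what you wrote. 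The fix is simply to reroute: prove \eqref{filt} $\Rightarrow$ \eqref{pbw} (which your argument does), then extract \eqref{pbw} $\Rightarrow$ \eqref{pbww} from the weight-vector step inside your \eqref{pbw} $\Rightarrow$ \eqref{filtfin}.
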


The proof of Theorem \ref{refiltrar} is interesting from the computational point of view, we will thus give in the following paragraphs a sketch of it highlighting the effective aspects. 

\subsubsection{From Filtrations to Quantum Relations.} 

Let $R$ be a filtered $\cuerpo{k}$-algebra with
filtration $FR$ such that $\gr(R)$ is 
generated by homogeneous elements $y_1, \dots, y_n$ with
$\deg(y_i) = u_i \geqslant 0$ for $1 \leqslant i \leqslant n$. Put
$\gordo{u} = (u_1, \dots, u_n) \in \BbbN^n$ and, for $1 \leqslant i
< j \leqslant n$, let $0 \neq q_{ji} \in \cuerpo{k}$ such that
$y_jy_i = q_{ji}y_iy_j$.

If $x_1, \dots, x_n \in R$ are such that $y_i = x_i + F_{u_i-1}R$ for $1
\leqslant i \leqslant n$ then a straightforward computation gives for any $s \in \N$:
\[
F_sR = \sum_{\abs{u}{\aalpha} \leqslant
s}\cuerpo{k}\mono{x}{\alpha}\,.
\]
Moreover, the algebra $R$ is generated by $x_1, \dots, x_n$ and there
are polynomials $p_{ji}$ for $1 \leqslant i < j \leqslant n$ such
that
\[
Q \equiv x_jx_i = q_{ji}x_ix_j + p_{ji} \qquad \textrm{with}\;\;
\exp{p_{ji}} \prec_{\gordo{u}} \eepsilon_i + \eepsilon_j\,.
\]
Finally, 
if $\gr(R)$ is a quantum affine space, that is, the standard monomials $y_1^{\alpha_1} \cdots y_n^{\alpha_n}$ are linearly independent over $\field{k}$, then it is easily checked that the monomials $x_1^{\alpha_1} \cdots x_n^{\alpha_n}$ are linearly independent, too. 

Therefore, 
$R = \field{k}\{x_1, \dots, x_n; Q, \preceq_{\gordo{u}} \}$ is a PBW
algebra. This proves \eqref{filt} $\Rightarrow$ \eqref{pbw} in Theorem \ref{refiltrar}.

\subsubsection{Bounding Quantum Relations.}

Let us now consider an algebra $R$ with generators $x_1, \dots,
x_n$ satisfying a set
\begin{equation}\label{qrelations}
Q = \{ x_jx_i = q_{ji}x_ix_j + p_{ji}, \; 1 \leqslant i < j
\leqslant n \}
\end{equation}
of \emph{quantum relations}, for nonzero scalars $q_{ji} \in
\cuerpo{k}$ and polynomials $p_{ji} \in R$. If there is an admissible ordering $\preceq$ on
$\mathbb{N}^n$ such that $\exp{p_{ji}} \prec \eepsilon_i +
\eepsilon_j$, that is, $\max_{\preceq} \Newton(p_{ji}) \prec
\eepsilon_i + \eepsilon_j$, for every $1 \leqslant i < j \leqslant
n$, we say that the quantum relations are
\emph{$\preceq$-bounded.} 

Our next aim is to answer to the question: Given the quantum relations $Q$, it is possible to decide whether they are $\preceq$-bounded  for some $\preceq$?   To this end, 
define, for $1 \leqslant i < j
\leqslant n$, the translated sets $C_{ji} = \Newton(p_{ji}) -
\eepsilon_i - \eepsilon_j$, and the finite set
\begin{equation}\label{transnewton}
C_Q = \bigcup_{1 \leqslant i < j \leqslant n} C_{ji} \cup \{
-\eepsilon_1, \dots, -\eepsilon_n \} \,.
\end{equation}
Let us also define the (open) polyhedron
\begin{equation}\label{feasible}
\Phi_Q = \{ \gordo{w} \in \mathbb{R}^n :
\bilin{\gordo{w}}{\gordo{\gamma}} < 0 \; \forall \; \gordo{\gamma} \in
C_Q \} \,.
\end{equation}
Observe that all points in $\Phi_Q$, if any, have strictly
positive components. By density of the rational numbers in the reals, if $\Phi_Q$ is not empty, then it
contains vectors with strictly positive rational components and,
by multiplying by a suitable positive integer, $\Phi_Q$ contains
vectors with strictly positive integer components. Define, for a weight vector $\gordo{w}$,
\[
\deg_{\gordo{w}}(p_{ji})  = \max \{ \abs{\gordo{w}}{\aalpha} : \aalpha \in \Newton(p_{ji}) \} \,.
\]

The ideas from \cite{Mora/Robbiano:1988} or \cite{Weispfenning:1987} may be used to obtain the following proposition. A direct explicit proof was given in \cite[Proposition 2.1, Theorem 2.3]{Bueso/Gomez/Lobillo:2001c}. 

\begin{proposition}\label{refiltra} \cite[Proposition 2.8]{Bueso/Gomez/Lobillo:2001a} 
Let $R$ be an algebra over a field $\field{k}$ with
generators $x_1, \dots, x_n$ satisfying the set $Q$ of quantum relations
\eqref{qrelations}. There exists an admissible ordering $\preceq$
on $\mathbb{N}^n$ such that $\exp{p_{ji}} \prec \eepsilon_i +
\eepsilon_j$ for every $1 \leqslant i < j \leqslant n$ if and only
if $\Phi_Q$ is not empty. In such a case, $\deg_{\gordo{w}}{p_{ji}}
< w_i + w_j$ for every $\gordo{w} = (w_1, \dots, w_n) \in \Phi_Q$
and every $1 \leqslant i < j \leqslant n$.
\end{proposition}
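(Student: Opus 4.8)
The plan is to prove the two implications of the equivalence separately, and to extract the final quantitative statement essentially for free from the analysis of the ``if'' direction: for a fixed $\gordo{w}$, the assertion ``$\deg_{\gordo{w}}(p_{ji}) < w_i + w_j$ for all $i<j$'' is just a rephrasing of part of the defining conditions of $\Phi_Q$, since $\aalpha - \eepsilon_i - \eepsilon_j \in C_{ji} \subseteq C_Q$ for every $\aalpha \in \Newton(p_{ji})$.

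For the ``if'' direction I would start from an arbitrary $\gordo{w} \in \Phi_Q$ and use the single observation that, for $\aalpha \in \Newton(p_{ji})$, the membership $\aalpha - \eepsilon_i - \eepsilon_j \in C_Q$ means exactly $\bilin{\gordo{w}}{\aalpha - \eepsilon_i - \eepsilon_j} < 0$, i.e.\ $\abs{\gordo{w}}{\aalpha} < w_i + w_j$. Maximising over $\aalpha \in \Newton(p_{ji})$ gives $\deg_{\gordo{w}}(p_{ji}) < w_i + w_j$, which is the last claim. To produce the admissible ordering, I would note that $\Phi_Q$ is a nonempty open polyhedron all of whose points have strictly positive coordinates (the constraints attached to $-\eepsilon_1,\dots,-\eepsilon_n \in C_Q$), so by density it contains a vector $\gordo{w} \in \N_+^n$; for such $\gordo{w}$ the weighted lexicographic order $\preceq_{\gordo{w}}$ is admissible, and the strict weight inequality $\abs{\gordo{w}}{\aalpha} < \abs{\gordo{w}}{\eepsilon_i + \eepsilon_j}$ already forces $\aalpha \prec_{\gordo{w}} \eepsilon_i + \eepsilon_j$, hence $\exp{p_{ji}} = \max_{\preceq_{\gordo{w}}}\Newton(p_{ji}) \prec_{\gordo{w}} \eepsilon_i + \eepsilon_j$ for every $i<j$.

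For the ``only if'' direction, assume $\preceq$ is an admissible ordering with $\exp{p_{ji}} \prec \eepsilon_i + \eepsilon_j$ for all $i<j$. I would first extend $\preceq$, in the routine way, to a translation-invariant total order on $\BbbZ^n$, and observe that then every $\gordo{\gamma} \in C_Q$ satisfies $\gordo{\gamma} \prec \gordo{0}$: this is clear for $\gordo{\gamma} = -\eepsilon_\ell$ (since $\gordo{0} \prec \eepsilon_\ell$), and for $\gordo{\gamma} = \aalpha - \eepsilon_i - \eepsilon_j$ with $\aalpha \in \Newton(p_{ji})$ it follows from $\aalpha \preceq \exp{p_{ji}} \prec \eepsilon_i + \eepsilon_j$. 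A one-line induction using translation invariance shows that a sum of finitely many elements each $\prec \gordo{0}$ is again $\prec \gordo{0}$; hence no nontrivial $\BbbN$-linear combination of elements of $C_Q$ can equal $\gordo{0}$, which (clearing denominators in a hypothetical rational convex combination) is exactly the statement that $\gordo{0}$ does not lie in the convex hull of the finite set $C_Q$. Then the strict separating hyperplane theorem — equivalently Gordan's theorem applied to the finite system $\{\,\bilin{\gordo{w}}{\gordo{\gamma}} < 0 : \gordo{\gamma} \in C_Q\,\}$ — produces $\gordo{w} \in \BbbR^n$ with $\bilin{\gordo{w}}{\gordo{\gamma}} < 0$ for all $\gordo{\gamma} \in C_Q$, i.e.\ $\gordo{w} \in \Phi_Q$, so $\Phi_Q \neq \emptyset$.

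The only non-formal step, and hence the main obstacle, is this last passage in the ``only if'' part: converting the purely combinatorial fact that $\preceq$ ranks all of $C_Q$ below $\gordo{0}$ into a single linear functional witnessing it. This is the finite-set instance of the principle that every term order is ``locally'' a weight order; the general machinery is available in \cite{Mora/Robbiano:1988} and \cite{Weispfenning:1987}, but since $C_Q$ is finite the elementary convexity argument above suffices. One can also bypass the extension to $\BbbZ^n$ entirely by writing a hypothetical relation $\sum_{\gordo{\gamma} \in C_Q} m_{\gordo{\gamma}}\gordo{\gamma} = \gordo{0}$, moving the terms with negative coordinates to the other side to obtain an identity inside $\Nn$, and deriving a contradiction directly from admissibility of $\preceq$ together with the linear independence built into the definition of the $p_{ji}$; this is the explicit route of \cite{Bueso/Gomez/Lobillo:2001c}.
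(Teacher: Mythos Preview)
Your proposal is correct. Note, however, that the paper does not actually give its own proof of this proposition: it merely remarks that ``the ideas from \cite{Mora/Robbiano:1988} or \cite{Weispfenning:1987} may be used'' and that ``a direct explicit proof was given in \cite[Proposition 2.1, Theorem 2.3]{Bueso/Gomez/Lobillo:2001c}.'' Your argument is precisely a self-contained realisation of the first of these routes (reduce the existence of the admissible order to a strict linear feasibility problem via Gordan/separation, using that the finite set $C_Q$ lies entirely on the negative side of $\preceq$), and you correctly identify the second route as the alternative of \cite{Bueso/Gomez/Lobillo:2001c}. One small point worth tightening: when you pass from ``no nontrivial $\BbbN$-combination of $C_Q$ vanishes'' to ``$\gordo{0}$ is not in the convex hull of $C_Q$,'' you should make explicit that a hypothetical \emph{real} nonnegative dependence can be replaced by a rational one because the vectors in $C_Q$ have integer entries (so the solution set of $\sum x_{\gordo{\gamma}}\gordo{\gamma}=\gordo{0}$, $x_{\gordo{\gamma}}\geq 0$, is a rational polyhedral cone); you gesture at this with ``clearing denominators'' but the reduction from real to rational is the substantive step.
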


As a consequence of Proposition \ref{refiltra}, we get \eqref{pbw} $\Rightarrow$ \eqref{pbww} of Theorem \ref{refiltrar}.   A proof of \eqref{pbww} $\Rightarrow$ \eqref{filtfin} is given in Subsection  \ref{sec:GKdim}. 

\subsubsection{Is this Algebra PBW?} Assume known that a given algebra $R$ is generated by finitely many elements $x_1, \dots, x_n$ that satisfy a set $Q$ of quantum relations. According to Proposition \ref{refiltra}, the relations $Q$ will be $\preceq$-bounded for some admissible ordering $\preceq$ if and only if the open polytope $\Phi_Q$ is non empty, and, in such a case, $Q$ will be $\preceq_{\gordo{w}}$-bounded for any $\gordo{w} \in \Phi_Q$. An effective procedure to compute $\gordo{w} \in \Phi_Q$ with integer components, optimal in the sense that the total degree of $\gordo{w}$ is minimal, is to solve the linear programming problem (see \cite[Section 4]{Bueso/Gomez/Lobillo:2001c}):
\begin{equation}\label{lpro}
\begin{array}{l}
\textbf{minimize} \; f(\gordo{w}) = w_1 + \cdots + w_n \\ \textbf{with the
constraints}\\ \Phi_Q \equiv
\begin{cases}
w_i \geq 1 & (i = 1, \dots n), \\
 \langle \gordo{w},\gordo{\gamma} \rangle \leq -1 & (\ggamma \in C_{ Q})) \,.
\end{cases}
\end{array}
\end{equation}

Once an admissible ordering $\preceq_{\gordo{w}}$ is computed, in order to decide whether $R = \field{k}\{x_1, \dots, x_n; Q, \preceq_{\gordo{w}}\}$ is a PBW algebra we need to check if the standard monomials $x_1^{\alpha_1} \cdots x_n^{\alpha_n}$ are linearly independent over $\field{k}$. This can be decided by using Bergman's Diamond Lemma \cite{Bergman:1978}. 

More precisely, if $R$ satisfies a set $Q$ of $\preceq$-bounded quantum relations, then $R = \field{k}\langle x_1, \dots, x_n\rangle/I_Q$, where $I_Q$ is the two-sided ideal generated by $Q$ in the free $\field{k}$-algebra $\field{k}\langle x_1, \dots, x_n \rangle$. Following \cite[Ch. 3]{Bueso/Gomez/Verschoren:2003}, interpret $(Q,\preceq)$ as a reduction system on $\field{k}\langle x_1, \dots, x_n \rangle$, as in \cite[Definition 4.1, Ch. 3]{Bueso/Gomez/Verschoren:2003}. Then we may apply Bergman's Diamond Lemma to $(Q,\preceq)$ (see \cite[Theorem 4.7, Chp. 3]{Bueso/Gomez/Verschoren:2003}) to deduce that $R$ is a PBW algebra if and only if $x_k(q_{ji}x_ix_j + p_{ji} )$ and $(q_{kj}x_jx_k + p_{kj})x_i$ reduce to the same standard polynomial under $(Q,\preceq)$. By applying a suitable one-step reduction, one obtains the non-degeneracy condition from \cite[Lemma 2.1]{Levandovskyy:2005} that appears in the definition of $G$-algebra \cite[Definition 3.2]{Levandovskyy:2005}. Hence, PBW algebras and $G$-algebras from \cite{Levandovskyy:2005} are the same mathematical objects.

\subsection{Computation of the Gelfand-Kirillov Dimension}\label{sec:GKdim}

\subsubsection{Filtering Modules over PBW Algebras.}
Let $M$ be a finitely generated left module over a PBW $\field{k}$-algebra 
\[
R = \field{k}\{x_1,\dots, x_n; Q, \preceq \}\,. 
\]
If we wish to compute the Gelfand-Kirillov  dimension  of $M$, the use of a standard filtration on $R$ is not a good choice in the general case (it only could work in the case that the $p_{ji}$'s are quadratic). Fortunately, $R$ has finite dimensional filtrations that work nicely from the computational point of view. In fact, Proposition \ref{refiltra} provides a weight vector $\gordo{w} = (w_1,\dots,w_n) \in \Phi_Q$ such that 
\[
R = \field{k}\{x_1,\dots,x_n; Q, \preceq_{\gordo{w}}\}
\]
is a PBW algebra. Moreover, we can chose $w_i \geq 1$ and integer for all $i=1, \dots, n$. Now, for each $s \in \N$, define
\[
F_s^{\gordo{w}}R = \{ f \in R : \abs{\gordo{w}}{\exp{f}} \leqslant s \},
\]
where $\exp{f}$ is computed with respect to $\preceq_{\gordo{w}}$. By using that $\exp{fg} = \exp{f} + \exp{g}$ (see Theorem \ref{LPBW})  it is easy to check that $\{ F_s^{\gordo{w}}R : s \in \N \}$ is a filtration of $R$. Moreover, the associated graded algebra $\gr^{\gordo{w}}(R)$ is a quantum affine space $\mathcal{O}_{q}(\field{k}^n)$ of dimension $n$.  The filtration is obviously finite dimensional and $F_0^{\gordo{w}} R= \field{k}$.  Observe that this proves \eqref{pbww} $\Rightarrow$ \eqref{filtfin} in Theorem \ref{refiltrar}.

Given a finite dimensional vector subspace $M_0$ of $M$ that generates it as a left $R$-module, it is not difficult to see that the filtration $F_s^{\gordo{w}}M = F_s^{\gordo{w}}R \cdot M_0$ for $s \in \N$ is a \emph{good filtration}, that is, the associated graded left $\gr^{\gordo{w}}(R)$-module 
\[
\gr^{\gordo{w}}(M) = \bigoplus_{s \in \N} \frac{F_sM}{F_{s-1}M}
\]
 is finitely generated.  It follows from \cite[Proposition 6.5]{McConnell/Robson:1988} that
 \begin{equation}\label{GKdimM}
 \GKdim{M} = d(\dim_k F_s^{\gordo{w}}M) \,. 
 \end{equation}
 Thus, what we should compute is the degree of the $\gordo{w}$-weighted \emph{Hilbert function} of $M$ defined by $HF_M^{\gordo{w}}(s) = \dim_k F_s^{\gordo{w}}M$ for $s \in \N$. 
 
 \subsubsection{The Hilbert Function of a Module.} In order to use \eqref{GKdimM} to give an effective algorithm for computing the Gelfand-Kirillov dimension of $M$, we need a presentation $M = R^m/K$, where $K$ is a left $R$-submodule of a finitely generated free left $R$-module $R^m$. Recall from Section \ref{Buchberger} that if a finite set of generators of $K$ is explicitly given, then we may compute a Gr\"obner basis of $K$ with respect to $\preceq_{\gordo{w}}$ and, in particular, the basis of $\Exp(K)$. Our next aim is to show that the Hilbert function $HF_M^{\gordo{w}}$ of $M$, and its Gelfand-Kirillov dimension, is computable from the basis of $\Exp(K)$. To this end, observe that a $\field{k}$-basis of $M = R^m/K$ is
 \[
 \{ \gordo{x}^{\aalpha}\gordo{e}_i + K : (\aalpha,i) \notin \Exp(K) \}\,.
 \]
 Since $\aalpha \preceq_{\gordo{w}} \bbeta$ implies that $\abs{\gordo{w}}{\aalpha} \leqslant \abs{\gordo{w}}{\bbeta}$, we deduce that a $\field{k}$-basis of $F_sM$ is given by
 \[
 \{ \gordo{x}^{\aalpha} \gordo{e}_i + K : (\aalpha,i) \notin \Exp(K), \abs{\gordo{w}}{\aalpha} \leqslant s \}\,.
 \]
 Therefore,
 \[
 HF_M^{\gordo{w}} (s) = \card \{ (\aalpha,i) \notin \Exp(K), \abs{\gordo{w}}{\aalpha} \leqslant s \},
 \]
  where ``$\card$'' refers to the cardinal of a set. 
 We  thus see  that the $\gordo{w}$-weighted Hilbert function of $M$ depends only on the stable subset $\Exp(K)$ of $\Nnm$ (and on $\gordo{w}$, of course). Henceforth, its degree, which we know equals $\GKdim{M}$, depends ultimately  on  the basis of $\Exp(K)$. 
 
 \subsubsection{The Hilbert Function of a Stable Subset.}
 Let $E$ be a stable subset of $\Nnm$. The \emph{$\gordo{w}$-weighted Hilbert function} of $E$ is defined as 
 \[
 HF_E^{\gordo{w}}(s) = \card \{ (\aalpha,i) \notin E , \abs{\gordo{w}}{\aalpha} \leqslant s \}\,.
  \] 
 
 For every
$i= 1,\dots,m$, the set $E_{i} = \{\gordo{\alpha} \in\Nn; \,
(\gordo{\alpha},i) \in E \}$ is a monoideal of $\Nn$. Moreover,
\begin{equation}\label{disjunion}
E = \bigcup_{i=1}^{m} (\gordo{0},i) + E_i, 
\end{equation}
 Since the latter is a disjoint union, we get that 
\begin{equation}\label{HFsuma}
HF_E^{\gordo{w}} = HF_{E_1}^{\gordo{w}} + \cdots + HF_{E_m}^{\gordo{w}} 
\end{equation}
and
\[
d(HF_E^{\gordo{w}}) = \max \{ d(HF_{E_1}^{\gordo{w}}), \dots , d(HF_{E_m}^{\gordo{w}}) \} \,.
\]
We may thus reduce our problem to the case where $E \subseteq \Nn$ is a monoideal. By the inclusion-exclusion principle, we get that, if $E = E' \cup E''$ for monoideals $E', E'' \subseteq \Nn$, then
\begin{equation}\label{incex}
HF_E^{\gordo{w}} = HF_{E'}^{\gordo{w}} + HF_{E''}^{\gordo{w}} - HF_{E' \cap E''}^{\gordo{w}} \,.
\end{equation}
On the other hand, if $\aalpha_1, \dots, \aalpha_t \in \Nn$ is the basis of $E$, then
\[
E = (\aalpha_1 + \Nn) \cup \dots \cup (\aalpha_t + \Nn),  
\]
and, since $(\aalpha + \Nn) \cap (\bbeta + \Nn) = \aalpha \vee \bbeta + \Nn$, we get from \eqref{incex} that $HF_E^{\gordo{w}}$ is a linear combination, with integer coefficients, of functions of the form $HF_{\aalpha + \Nn}^{\gordo{w}}$, and the same applies, by \eqref{HFsuma}, when $E$ is a stable subset of $\Nn$. 

We know on the other hand (see e.g. \cite[Lemma 2.7]{Bueso/Gomez/Lobillo:2001b}) that $d(HF_E^{\gordo{w}}) = d(HF_E)$, where 
\[
HF_E(s) =  \card \{ (\aalpha,i) \notin E , \abs{}{\aalpha} \leqslant s \}.
\]

Since, for $\aalpha \in \Nn$, we have 
\begin{equation}\label{HFuno}
HF_{\aalpha + \Nn}(s) = \left\{\begin{array}{lcl} \binom{n+s}{s} & \text{ if } s < \abs{}{\aalpha} \\ \binom{n+s}{s} - \binom{n+s - \abs{}{\aalpha}}{s - \abs{}{\aalpha}}  &  \text{ if } s \geq \abs{}{\aalpha} \end{array} \right.
\end{equation}
we get from the previous discussion the following proposition. 

\begin{proposition}\label{HilbertPoly}
Let $E \subseteq \Nnm$ a stable subset, and assume that, in the decomposition \eqref{disjunion}, the basis $\{ \aalpha_1^i, \dots, \aalpha_{t_i}^i \}$ of $E_i$ is given for $i = 1, \dots, m$. Then, for every weight vector $\gordo{w}$ with strictly positive integer components, $d(HF_E^{\gordo{w}}) = d(HF_E)$. Moreover, there exists a polynomial with rational coefficients $HP_E$ such that $HF_E(s) = HP_E(s)$ for $s \geq \max \{ \abs{}{\aalpha_1^1 \vee \cdots \vee \aalpha_{t_1}^1}, \dots, \abs{}{\aalpha_1^{m} \vee \cdots \vee \aalpha_{t_m}^m} \}$
\end{proposition}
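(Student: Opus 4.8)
The plan is to reduce the statement about a general stable subset $E\subseteq\Nnm$ to the case of a single monoideal of the form $\aalpha+\Nn$, for which an explicit closed formula is available, and then to observe that the class of functions that eventually agree with a polynomial is closed under the finite integer linear combinations that arise. Concretely, I would proceed as follows.

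First I would recall, as already set up in the paragraphs preceding the statement, the disjoint decomposition \eqref{disjunion}, which yields \eqref{HFsuma} and hence $d(HF_E^{\gordo{w}}) = \max_i d(HF_{E_i}^{\gordo{w}})$; likewise the unweighted Hilbert functions satisfy $HF_E = \sum_{i=1}^m HF_{E_i}$. This reduces everything to the case $m=1$, i.e.\ $E\subseteq\Nn$ a monoideal with basis $\{\aalpha_1,\dots,\aalpha_t\}$, so that $E = (\aalpha_1+\Nn)\cup\cdots\cup(\aalpha_t+\Nn)$. Applying the inclusion–exclusion identity \eqref{incex} repeatedly, and using $(\aalpha+\Nn)\cap(\bbeta+\Nn) = \aalpha\vee\bbeta + \Nn$, I would write $HF_E$ (and $HF_E^{\gordo w}$) as a $\BbbZ$-linear combination of functions $HF_{\ggamma+\Nn}$ (resp.\ $HF^{\gordo w}_{\ggamma+\Nn}$) where each $\ggamma$ ranges over the finitely many joins $\aalpha_{j_1}\vee\cdots\vee\aalpha_{j_k}$ of subsets of the basis. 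The key point here is that all such $\ggamma$ satisfy $\abs{}{\ggamma}\le \abs{}{\aalpha_1\vee\cdots\vee\aalpha_t}$, since $\vee$ is the componentwise maximum and hence monotone, so every single $HF_{\ggamma+\Nn}$ is given by the polynomial branch of \eqref{HFuno} for $s\ge \abs{}{\aalpha_1\vee\cdots\vee\aalpha_t}$.

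Next I would invoke \eqref{HFuno}: for $s\ge\abs{}{\ggamma}$ one has $HF_{\ggamma+\Nn}(s) = \binom{n+s}{s}-\binom{n+s-\abs{}{\ggamma}}{s-\abs{}{\ggamma}}$, which is visibly a polynomial in $s$ of degree $\le n-1$ (the two leading $\binom{n+s}{n}$-terms cancel). Summing the finitely many such polynomial expressions with integer coefficients produces a single polynomial $HP_{E_i}\in\BbbQ[s]$ with $HF_{E_i}(s) = HP_{E_i}(s)$ for all $s\ge \abs{}{\aalpha_1^i\vee\cdots\vee\aalpha_{t_i}^i}$; then $HP_E := \sum_{i=1}^m HP_{E_i}$ does the job for $M = R^m/K$ with the bound being the maximum over $i$ of $\abs{}{\aalpha_1^i\vee\cdots\vee\aalpha_{t_i}^i}$, exactly as in the statement. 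For the degree claim $d(HF_E^{\gordo w}) = d(HF_E)$ I would cite \cite[Lemma 2.7]{Bueso/Gomez/Lobillo:2001b} as the excerpt already does, or alternatively give the one-line sandwich argument: if $w_{\min}$ and $w_{\max}$ are the least and greatest components of $\gordo w$, then $F^{\gordo w}_s M$ sits between the filtration pieces cut out by total degrees $\lfloor s/w_{\max}\rfloor$ and $\lfloor s/w_{\min}\rfloor$ of the unweighted filtration, which forces the two growth degrees to coincide.

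The main obstacle, such as it is, is purely bookkeeping: one must be careful that in the inclusion–exclusion expansion every monoideal that appears is again of the form $\ggamma+\Nn$ (this is where $(\aalpha+\Nn)\cap(\bbeta+\Nn)=\aalpha\vee\bbeta+\Nn$ is essential, and it is what keeps the expansion finite and within the class governed by \eqref{HFuno}), and that the uniform threshold $\abs{}{\aalpha_1\vee\cdots\vee\aalpha_t}$ genuinely dominates $\abs{}{\ggamma}$ for every join $\ggamma$ occurring. Both are immediate from monotonicity of $\vee$, so there is no real analytic or combinatorial difficulty; the content of the proposition is the packaging of \eqref{disjunion}, \eqref{incex} and \eqref{HFuno} into the statement that $HF_E$ is eventually polynomial with an explicit onset.
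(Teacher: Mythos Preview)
Your proof is correct and follows exactly the approach the paper sets up in the paragraphs immediately preceding the proposition: reduction to monoideals via \eqref{disjunion} and \eqref{HFsuma}, inclusion--exclusion \eqref{incex} using $(\aalpha+\Nn)\cap(\bbeta+\Nn)=\aalpha\vee\bbeta+\Nn$, the explicit formula \eqref{HFuno}, and the citation of \cite[Lemma 2.7]{Bueso/Gomez/Lobillo:2001b} for the weighted versus unweighted growth degree. The only cosmetic slip is the stray reference to ``$M=R^m/K$'', which belongs to the application rather than to the proposition itself; the argument is purely combinatorial on $E$.
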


\subsubsection{The Effective Computation of the Gelfand-Kirillov Dimension.}
As for the Gelfand-Kirillov dimension of the left $R$-module $M = R^m/K$ concerns, we have the following consequence of Proposition \ref{HilbertPoly}. 

\begin{corollary}
Let $M = R^m/K$ be a finitely generated left module over a PBW algebra $R = \field{k}\{ x_1, \dots, x_n; Q, \preceq \}$. Let $\gordo{w} \in \Phi_Q$ any weight vector with strictly positive integer components, and compute the stable subset $\Exp(K)$ with respect to $\preceq_{\gordo{w}}$. Then $\GKdim{M}$ is the degree of the polynomial $HP_{\Exp(K)}$. Thus, in particular, $\GKdim{R} = n$. 
\end{corollary}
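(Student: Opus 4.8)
The plan is to deduce the corollary by assembling the three ingredients already established in this subsection: the computability of a Gr\"obner basis with respect to a suitable weighted ordering, the reduction of the Hilbert function of $M$ to that of a stable subset, and the explicit polynomial behaviour of the Hilbert function of a stable subset from Proposition \ref{HilbertPoly}. First I would invoke Proposition \ref{refiltra} to produce a weight vector $\gordo{w} \in \Phi_Q$ with strictly positive integer components, so that $R = \field{k}\{x_1, \dots, x_n; Q, \preceq_{\gordo{w}}\}$ is a PBW algebra and the filtration $F_s^{\gordo{w}}R$ has associated graded ring a quantum affine space. By \eqref{GKdimM} we have $\GKdim{M} = d(\mathrm{HF}_M^{\gordo{w}})$, where $\mathrm{HF}_M^{\gordo{w}}(s) = \dim_k F_s^{\gordo{w}}M$.

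Next I would recall, as shown in the paragraph on the Hilbert function of a module, that the presentation $M = R^m/K$ together with a Gr\"obner basis of $K$ with respect to $\preceq_{\gordo{w}}$ (computable by Algorithm \ref{alg:5.2}) gives the basis of the stable subset $\Exp(K) \subseteq \Nnm$, and that
\[
\mathrm{HF}_M^{\gordo{w}}(s) = \card \{ (\aalpha,i) \notin \Exp(K) : \abs{\gordo{w}}{\aalpha} \leqslant s \} = \mathrm{HF}_{\Exp(K)}^{\gordo{w}}(s)\,.
\]
Then Proposition \ref{HilbertPoly}, applied to the stable subset $E = \Exp(K)$ with the bases of the monoideals $E_i$ read off from the Gr\"obner basis, tells us that $\mathrm{HF}_{\Exp(K)}^{\gordo{w}}$ eventually agrees with a polynomial $\mathrm{HP}_{\Exp(K)}$ with rational coefficients, and that $d(\mathrm{HF}_{\Exp(K)}^{\gordo{w}}) = d(\mathrm{HF}_{\Exp(K)})$. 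Combining this chain of equalities, $\GKdim{M} = d(\mathrm{HF}_M^{\gordo{w}}) = d(\mathrm{HF}_{\Exp(K)}^{\gordo{w}}) = \deg \mathrm{HP}_{\Exp(K)}$, which is exactly the assertion.

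For the final clause $\GKdim{R} = n$, I would specialize to $M = R = R^1/\{0\}$, so that $K = \{0\}$ and $\Exp(K) = \emptyset$; then $\mathrm{HF}_{\Exp(K)}(s) = \binom{n+s}{s}$, a polynomial in $s$ of degree exactly $n$, whence $\deg \mathrm{HP}_{\Exp(K)} = n$. Alternatively one notes directly that $F_s^{\gordo{w}}R$ has $\field{k}$-basis the standard monomials $\gordo{x}^{\aalpha}$ with $\abs{\gordo{w}}{\aalpha} \leqslant s$, whose number grows like $s^n$.

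I do not anticipate a serious obstacle here: the corollary is essentially a bookkeeping statement collecting results proved earlier in the section, and the only points requiring any care are checking that the Gr\"obner basis computation indeed delivers the bases of all the $E_i$ needed as input to Proposition \ref{HilbertPoly}, and that the degree of the Hilbert polynomial is unaffected by passing between the $\gordo{w}$-weighted and unweighted counting functions — but the latter is precisely the content of Proposition \ref{HilbertPoly} and \cite[Lemma 2.7]{Bueso/Gomez/Lobillo:2001b}, which may be cited. The mild subtlety worth a sentence in the write-up is that $M$ being finitely generated guarantees (via the left noetherianity of $R$) that such a presentation $R^m/K$ with $K$ finitely generated exists, so the Gr\"obner basis machinery applies.
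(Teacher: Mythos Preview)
Your proposal is correct and matches the paper's intended derivation: the corollary is stated without proof in the paper precisely because it is the bookkeeping assembly of \eqref{GKdimM}, the identification $\mathrm{HF}_M^{\gordo{w}} = \mathrm{HF}_{\Exp(K)}^{\gordo{w}}$, and Proposition \ref{HilbertPoly}, exactly as you outline. One small wording slip to fix in your write-up: Proposition \ref{HilbertPoly} asserts that the \emph{unweighted} function $\mathrm{HF}_{\Exp(K)}$ (not $\mathrm{HF}_{\Exp(K)}^{\gordo{w}}$) eventually agrees with the polynomial $\mathrm{HP}_{\Exp(K)}$; the weighted function need not be eventually polynomial, but your chain of equalities $\GKdim{M} = d(\mathrm{HF}_{\Exp(K)}^{\gordo{w}}) = d(\mathrm{HF}_{\Exp(K)}) = \deg \mathrm{HP}_{\Exp(K)}$ is already stated correctly and is all that is needed.
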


\begin{remark}
According to Proposition \ref{HilbertPoly}, the polynomial $HP_{\Exp(K)}$ may be computed by interpolation from its values at $s_0, s_0+1, \dots, s_0+ n -1$, where $s_0 = \max \{ \abs{}{\aalpha_1^1 \vee \cdots \vee \aalpha_{t_1}^1}, \dots, \abs{}{\aalpha_1^{m} \vee \cdots \vee \aalpha_{t_m}^m} \}$, and $\{ \aalpha_1^i, \dots, \aalpha_{t_i}^i \}$ is the basis of $E_i$ in the decomposition \eqref{disjunion} for $E = \Exp(K)$. Alternatively, one can compute each of the polynomials $HP_{E_i}$, for $i = 1, \dots, m$, and then compute $HP_E = HP_{E_1} + \cdots + HP_{E_m}$. The calculation of $HP_{E_i}$ can be done by interpolation, or recursively from the basis of $E_i$ by using \eqref{incex} and \eqref{HFuno}. 
\end{remark}

Each monoideal $E_i$ in the decomposition \eqref{disjunion} defines a monomial ideal of the commutative polynomial ring $\field{k}[x_1, \dots, x_n]$, and, hence, the Hilbert function $HF_{E_i}$ is, precisely, the Hilbert function of the corresponding monomial ideal. Thus, in order to compute $HP_{E_i}$, and its degree, we may use any algorithm available for monomial ideals. 

For 
$\aalpha \in \BbbN^n$, set
$
\supp (\aalpha) = \{ i \in \{1, \dots, n \} ~|~ \alpha_i \neq 0 \},
$
and define, for $i = 1, \dots, m$,
\[
V(E_i) = \{ \sigma \subseteq \{1,\dots,n \} ~|~ \sigma \cap \supp (\aalpha_k^{i}) \neq
\emptyset \; \forall k = 1, \dots, t_i \} \,.
\]

Then 
\begin{equation}\label{dimensionV}
d(HF_{E_i}) = n - \min \{ \card (\sigma) ~|~ \sigma \in V(E_i) 
\} \,.
\end{equation}

A proof of \eqref{dimensionV}, inspired in the material of
\cite[Section 9.3]{Becker/Weispfenning:1993}, can be seen in \cite[Section 4]
{Bueso/Castro/Gomez/Lobillo:1998}.

In conclusion, an algorithm to compute the Gelfand-Kirillov dimension of a given finitely
generated left $R$-module $M = R^m/K$ over a PBW algebra $R$ is described as follows. Given a set of generators
$\{ \gordo{f}_1, \dots, \gordo{f}_s \}$ for $K$, proceed according to the following steps:

\begin{enumerate}
\item Compute a  weight vector  $\gordo{w} = (w_1, \dots, w_n) \in \Phi_Q$ with $w_i \geq 1$ and integer. 
\item Compute a
Gr\"{o}bner basis $G$ for $K$ with respect to $\preceq_{\gordo{w}}$. 
\item Compute, from $G$, the basis $B$ of the stable subset $\Exp(K)$ of $\Nnm$.
\item Set, for $i = 1, \dots, m$, $B_i = \{ \aalpha \in \Nn : (\aalpha, i) \in B \}$.
\item Set $E_i = B_i + \Nn$, for $i = 1, \dots, m$, and compute $d(HF_{E_i})$.
\item $\GKdim{M} = \max \{ d(HF_{E_1}), \dots, d(HF_{E_m}) \}$.  
\end{enumerate}

\subsubsection{Holonomic Modules.}
The grade number $j(M)$ of a finitely presented left $R$-module was defined in Remark \ref{rem:jM}, and a procedure for its computation when $R$ is a left PBW ring was outlined. In the case that $R$ is a PBW algebra, the computation of $j(M)$ is much simpler, because it reduces to the computation of $\GKdim{M}$. This follows from the formula
\begin{equation}\label{CM}
j(M) + \GKdim{M} = \GKdim{R} 
\end{equation}
which is deduced from some results on algebras that do have a finite dimensional filtration with a ``nice'' associated graded algebra, as every PBW algebra does by Theorem \ref{refiltrar} (See \cite[Theorem 4.1]{Bueso/Gomez/Lobillo:2001b}). An algebra $R$ satisfying \eqref{CM} is said to be \emph{Cohen-Macaulay}.

Since $j(M)$ reaches its maximum for holonomic modules, we could then define holonomic modules over a PBW algebra as those having minimal Gelfand-Kirillov dimension (this minimum does exist because $\GKdim{M}$ is an integer for every finitely generated left $R$-module). A holonomic module $M$ is always of finite length, being the length of $M$ bounded by the multiplicity of $M$, computed from its Hilbert function $HF_M^{\gordo{w}}$ (see \cite[Theorem 2.8]{Gomez/Lenagan:2000}).

\subsubsection{(Re)filtering beyond PBW Algebras.}
The tight relationship between ``quantum relations'' and filtrations discussed in this section may be extended from PBW algebras to the more general  framework of ring extensions.  Let $A$ be any ring, and consider an iterated Ore extension of the form $A[x_1, \sigma_1]\cdots[x_n,\sigma_n]$, where $\sigma_j(x_i) = q_{ji}x_i$ for some $q_{ji} \in A$, for $1 \leq i < j \leq n$. Many good algebraic properties may be lifted from $A$ to $A[x_1, \sigma_1]\cdots[x_n,\sigma_n]$, specially when the elements $q_{ji}$ are units of $A$. Now, the strategy is to assume a ring  
extension $A \subseteq B$, such that $B$ is generated as a ring by $A$ and finitely many elements $x_1, \dots, x_n \in B$. If the relations among $A$ and the generators $x_1, \dots, x_n$ are not too complicated, a suitable filtration on $B$ with associated graded ring isomorphic to the iterated Ore extension $A[x_1, \sigma_1]\cdots[x_n,\sigma_n]$ can be defined, and the nice properties from $A[x_1,\sigma_1] \cdots [x_n, \sigma_n]$ (and, hence, from $A$) may be lifted to $B$. The easiest situation is when we assign degree $0$ to the elements of $A$, and degree $1$ to the generators $x_1, \dots, x_n$.  This gives the notion of a skew PBW extension from \cite{Lezama/Gallego:2011,Lezama/Reyes:2013}, where many interesting properties are lifted from $A$ to the skew PBW extension $B$. 

These skew PBW extensions are linear extensions of $A$, in the sense that the relations among $A$ and the generators take the form $x_iA \subseteq A + Ax_i, x_jx_i - q_{ji}x_ix_j \in A + Ax_1 + \cdots + Ax_n$ (see \cite{Gomez:2001} for the ``nonlinear'' setting). Let us mention that any left PBW ring is a left quantum bounded extension (\emph{extensi\'on cu\'antica acotada por la izquierda}), in the sense of \cite[Definici\'on 6]{Gomez:2001}, of the base division ring $\field{D}$,  but it needs not to be in general a skew PBW extension of $\field{D}$. An interesting problem here is to investigate under which circumstances it is possible to write a left PBW ring as an iterated skew PBW extension (this happens for some examples of PBW algebras, see \cite{Lezama/Reyes:2013}). 

Using these ideas (in the nonlinear case) in conjunction with a suitable generalization of Theorem \ref{refiltrar} (see \cite[Theorem 1]{Gomez/Lobillo:2004}), we deduce from \cite[Corollary 2]{Gomez/Lobillo:2004} that every PBW ring is Auslander-Regular and its Grothendieck group $K_0$ is trivial (see \cite{Bjork:1989} and \cite{McConnell/Robson:1988}, respectively, for these notions).  A suitable refinement of \cite[Theorem 1]{Gomez/Lobillo:2004} lead to prove that the complex quantum enveloping algebra $U_q(C)$ associated to any Cartan matrix $C$ is Auslander-Regular and Cohen-Macaulay \cite[Theorem 3, Theorem 5]{Gomez/Lobillo:2004}. The algebra $U_q(C)$, for a general $C$, seems not to be a PBW algebra nor a skew PBW extension of some ``easier enough'' subring. A detailed discussion of these topics lands beyond the scope of this overview.

\section{Appendix on Computer Algebra Systems \\ (by \emph{V. Levandovskyy})}
In this appendix, we discuss computer algebra systems, which 
provide support for the methods in the overview.

The website ``Oberwolfach References on Mathematical Software" 
\begin{center}
\url{http://orms.mfo.de/} 
\end{center}
is a web-interfaced collection of information and links on general mathematical software. In particular, it follows the {\it ORMS classification scheme} for mathematical software. 

There is a database on specialized software
\begin{center} \url{http://www.ricam.oeaw.ac.at/Groebner-Bases-Implementations/},\end{center} where Gr\"obner-related properties of systems for 
both commutative and non-commutative computations are described
by the authors of systems.

\subsection{Functionality of Systems for $\field{D}[x;\sigma,\delta]$}

To the best of our knowledge, no computer algebra system
provides computations over an arbitrary non-commutative skew field $\field{D}$ directly. 
Notably, the arithmetic operations over the skew field of fractions ${\rm Quot}(R)$ of $R$, 
where $R$ is a PBW algebra, are algorithmic (see \cite{Apel/Lassner:1988} for the case of universal enveloping algebras of Lie algebras, and \cite[Theorem 3.2]{Bueso/alt:2001} for the general case). However, in general 
the very basic arithmetic operations will invoke Gr\"obner bases over $R$.

From now on we denote by $\field{D}$ a (commutative) field. 

Let $A=\field{D}[x;\sigma,\delta]$ be a single Ore extension of $\field{D}$ with $\sigma$ bijective as described in Section \ref{sec:Ore}.
Then 
\begin{itemize}
\item division with rest,
\item extended greatest common right divisor (i.e., ${\rm gcrd}$ together with cofactors of its presentation via the input polynomials) and
\item extended least common left multiple (i.e., ${\rm lclm}$ and corresponding left quotients of the input polynomials)
\end{itemize}
can be computed with the help of packages {\sc OreTools} \cite{OreTools}, 
{\sc Ore$\_$algebra} \cite{Chyzak/Salvy:1998} in computer algebra system {\sc MAPLE} and 
{\sc ore$\_$algebra} \cite{OreAlgebraSage} in computer algebra system {\sc SAGE}.

Let $D=K(t)$ and $\delta(t)\in K[t]$.
Then a Jacobson form of a matrix with entries in $D[X;\sigma,\delta]$
can be computed with the library  {\tt jacobson.lib}
 \cite{Levandovskyy/Schindelar:2012,Jacobsonlib} of {\sc Singular:Plural}, 
by using fraction-free strategy. 

{\sc Maple} packages by Cheng et al. \cite{BCL06,DCL08}
provide two versions of the algorithm for computing an order basis of a polynomial matrix $M$ from an Ore algebra $A$, namely fraction-free version {\sc FFreduce} and a modular version {\sc Modreduce}.

Order bases are used for the computation of the left nullspace of
$M$ and indirectly for the computation of the Popov form of $M$. 
A Jacobson form can be obtained from the Popov form by further computation.

If $\field{D}$ is a differential field and $\delta$ is a derivation on $\field{D}$, the package {\sc Janet} for {\sc Maple} \cite{Robertz} provides the classical algorithm for the computation of a Jacobson normal form.

\subsection{Functionality of Systems for Multivariate Ore Algebras}

Left Gr\"obner bases together with other tools for multivariate Ore algebras
are available from the following packages:

\begin{itemize}
\item {\sc  Ore$\_$algebra} from the {\sc Mgfun} family \cite{Chyzak/Salvy:1998}, 
by F.~Chyzak et al. in computer algebra system {\sc MAPLE},
\item {\sc JanetOre} \cite{Robertz:2007} by D.~Robertz et al. in computer algebra system {\sc MAPLE},
\item {\sc HolonomicFunctions} \cite{HoloFun} by C.~Koutschan in computer algebra system {\sc MATHEMATICA}.
\end{itemize}

These systems can work with the operators, arising from the
following operations: differentiation, shift, Eulerian differentiation, forward difference, $q$-shift, $q$-differentiation (Jackson derivation), commutative multiplication. 

The latter system allows to define general $\field{D}[x,\sigma,\delta]$ as well.

Ore algebras found many applications to certain special functions \cite{Chyzak/Salvy:1998} as well as to algebraic systems and control theory \cite{Chyzak/Quadrat/Robertz:2005}, to name a few examples. In particular, 
collections of packages {\sc Mgfun} by Chyzak et al. (containing  {\sc  Ore$\_$algebra})
 and {\sc RISCErgoSum} from RISC Linz (containing {\sc HolonomicFunctions}) provide rich functionality for manipulations with special functions.

The package {\sc OreModules} \cite {Chyzak/Quadrat/Robertz:2007} for {\sc Maple} together with its subpackages allows to determine many module- and control-theoretic properties of linear systems over the Ore algebras available in the {\sc  Ore$\_$algebra} package.

\subsection{Functionality of Systems for PBW Algebras}

The three most important algorithms, namely 
\begin{itemize}
\item (left) Gr\"obner basis, 
\item first (left) syzygy module, 
\item (left) transformation matrix between a set of generators of a left submodule of a free module of a finite rank and its Gr\"obner basis 
\end{itemize}
are sometimes called \emph{the Gr\"obner trinity}. One can prove that all three objects can be obtained by only one Gr\"obner basis computation of the extended input. In the overview above we have seen their importance.

B.~Buchberger and B.~Sturmfels coined as \emph{Gr\"obner basics} the most
 fundamental applications of Gr\"obner bases, which include elimination of variables, kernel of a module (resp. ring) homomorphism, Hilbert series, various dimensions etc.

As we have indicated in the beginning, at the moment no system supports
skew fields as coefficient domains, hence PBW rings cannot be treated. 
On the contrary, PBW algebras have been adressed by at least three systems.

\subsubsection{General PBW Algebras.}

\begin{itemize}
\item {\sc Felix} by J.~Apel and U.~Klaus \cite{FelixSys} provides Buchberger's algorithm and its 
generalizations, including syzygy computations and basic ideal operations.
\item {\sc MAS} by H.~Kredel and M.~Pesch \cite{MASsys} contains a large library of implemented Gr\"obner basis algorithms, covering most of Gr\"obner basics.
\item \textsc{Singular:Plural} by V.~Levandovskyy et al. \cite{Plural} is a part of \textsc{Singular}, responsible 
for computations with the most general PBW algebras (which are addressed as $G$-algebras in this system)
as well as with the factor algebras of PBW algebras modulo two-sided ideals. It allows to work over any field and use any well-ordering, available in {\sc Singular}. 
Except for Gr\"obner basics, numerous algorithms are implemented in more than 20 {\sc Plural} libraries.
In particular, {\sc Plural} has the only implementation of the computation of Gelfand-Kirillov dimension of finitely presented modules, known to us. 
\end{itemize}

Unfortunately, the development of systems {\sc Felix} and {\sc MAS} has ceased by now. Both systems are still available for download and perform nicely. They, however, do not fully support quantum algebras.

\subsubsection{Special PBW Algebras.}

In a variety of situations one is interested in working with algebras of operators
with variable coefficients, like Weyl or shift algebras with coefficients in $K[x_1,\ldots,x_n]$.

{\sc MAPLE} packages {\sc Janet} resp. {\sc LDA} by D.~Robertz et al. \cite{Robertz,LDA}
compute Gr\"obner and Janet bases of left ideals over rings of linear differential resp. difference operators.

A {\sc Singular} subsystem {\sc SCA} by O.~Motsak \cite{SCA} provides standard and Gr\"obner bases, syzygies and free resolutions as well as Gr\"obner basics for \emph{graded $\mathbb{Z}_2$-commutative algebras}, that is
tensor products over the field $K$ of (local or global) commutative algebras with an exterior algebra.

The system \textsc{Macaulay2} by D.~Grayson and M.~Stillman
\cite{MAc2} includes various Gr\"obner bases-based algorithms for exterior and Weyl algebras.

\subsubsection{$D$-Modules.}
The challenging problems in the realm of algebraic resp. analytic $D$-modules, that is systems
of linear partial differential equations with polynomial resp. power series coefficients attracted 
the attention of computer algebraists since decades.

The experimental system \textsc{Kan/sm1} \cite{KAN} by N.~Takayama et al. provides Gr\"obner basis computations in polynomial rings, rings of differential operators, rings of difference and $q$-difference operators. 
Its functionality for $D$-modules is remarkable, providing implementations for many algorithms from 
the book \cite{SST:2000}.

The package {\tt D-modules.m2} for {\sc Macaulay2} \cite{dmodMac2} provides
very reach functionality for computations with $D$-modules, including polynomial/rational and holonomic 
solutions of systems as well as numerous invariants for singularities.

The system {\sc RISA/ASIR} by M.~Noro et al. \cite{Asir} provides newly implemented functionality, similar to {\sc Kan/sm1} on the higher level of performance. Notably are many implemented algorithms for $D$-modules.

{\sc Singular:Plural} has implementations of many algorithms for $D$-module theory as well \cite{ABLMS}, 
including multivariate Weyl closure of a left ideal, polynomial/rational solutions and various local invariants for singularities.

\subsubsection{Factorization of Non-Commutative Polynomials.}

H.~Melenk and J.~Apel created a package for the computer algebra system \textsc{REDUCE}
\cite{Apel/Melenk:1994}, which provides tools to deal with a big class of non-commutative multivariate polynomial algebras. Among other, it contains an algorithm for factorization of polynomials over supported algebras.

M.~van Hoeij developed an algorithm to factorize a differential operator with rational coefficients \cite{VanHoeij:1997}. This technique was enhanced and extended further, in particular 
to the case of power series coefficients. Nowadays this algorithm is implemented in the {\sc DETools} package of \textsc{Maple} as the standard algorithm for factorization of such operators.

The computer algebra web-service \textsc{ALLTYPES} is based on computer algebra system \textsc{REDUCE}. 
It is only accessible as web-service and features the algorithm for factoring differential operators due to F.~Schwarz and D.~Grigoriev \cite{GrigorievSchwartz:2004}.

In {\sc Singular:Plural} there is a library {\tt ncfactor.lib} by A.~Heinle \cite{Ncfactorlib}, which provides algorithms for factorization of polynomials over univariate Weyl and shift algebras. Moreover, $\mathbb{Z}$-graded
polynomials over $q$-Weyl algebras can be factorized as well. Notably, there is an experimental
implementation of algorithms for the factorization of polynomials over multivariate algebras as above. 

\subsection{Further Systems}

Here, we briefly mention some other projects, having relevance to the topics of the overview.

The {\sc Homalg} project \cite{Barakat/Robertz:2008} is a multi-author multi-package open source software project for constructive homological algebra.

The package \textsc{ISOLDE}  \cite{ISOLDE} for {\sc Maple} contains symbolic algorithms for solving systems of ordinary linear differential equations, and more generally linear functional matrix equations. Some commands of
the \textsc{ISOLDE} package have been extended to handle algebraic integrable
connections \cite{integrable}.

The package \textsc{LinearFunctionalSystems} for {\sc Maple} provide, among other, implementation of algorithms
for finding polynomial/rational/power series solutions of a linear ($q$-) difference system with polynomial coefficients. These algorithms are based on the implementation of the EG-elimination algorithm by S.~Abramov. 
See also the package \textsc{LRETools} for {\sc Maple} for functions, manipulating and finding certain types of solutions of linear recurrence equations and the package \textsc{QDifferenceEquations} for finding 
polynomial/rational or $q$-hypergeometric solutions for a linear $q$-difference equation with polynomial coefficients.

There are several packages {\sc Maple} by Y.~Cha \cite{Cha} in particular, computing closed-form solutions for second-order homogeneous linear ordinary difference operators with rational coefficients. Also, homomorphisms between two linear ordinary difference operators can be computed.

\end{document}